\colorlet{hellgrau}{black!20!white}
\colorlet{dunkelgrau}{black!60!white}
\colorlet{grau}{black!40!white}
\colorlet{bold}{black} 
\tikzstyle{ledge}=[thick, grau]
\tikzstyle{rededge}=[very thick, bold]
\tikzstyle{lvertex}=[thick,circle,inner sep=0.cm, minimum size=2mm, fill=white, draw=grau]
\tikzstyle{redvx}=[thick,circle,inner sep=0.cm, minimum size=2mm, fill=white, draw=bold]
\tikzstyle{hvertex}=[thick,circle,inner sep=0.cm, minimum size=2mm, fill=white, draw=black]
\tikzstyle{hedge}=[very thick]
\tikzstyle{medge}=[thick]
\tikzstyle{harrow}=[thick,arrows=->]
\tikzstyle{darrow}=[thick,arrows=<-]
\tikzstyle{point}=[draw,circle,inner sep=0.cm, minimum size=1mm, fill=black]
\tikzstyle{pointer}=[thick,->,shorten >=2pt,color=dunkelgrau]
\tikzstyle{facebdry}=[color=auchblau, very thick] 
\tikzstyle{face}=[facebdry,fill=hellblau]
\tikzstyle{nface}=[color=hellblau,fill=hellblau,thick] 
\tikzset{>={latex}}
\tikzstyle{tinyvx}=[thick,circle,inner sep=0.cm, minimum size=1.3mm, fill=white, draw=black]
\tikzstyle{smallvx}=[hvertex,minimum size=1.7mm]
\newcommand{\comment}[1]{}
\newcommand{\N}{\mathbb N}
\newcommand{\R}{\mathbb R}
\newcommand{\EP}{Erd\H os-P\'osa}
\newcommand{\cP}{\mathcal{P}}
\newcommand{\EPP}{{E}rd{\H o}s-{P\'o}sa property}
\title{On the edge-Erd\H{o}s-P\'{o}sa property of Ladders}
\author{Raphael Steck\thanks{Ulm University, raphael-st@web.de} \hspace{0.01cm} and Arthur Ulmer\thanks{Ulm University, arthur.ulmer@gmx.net, supported by DFG, grant no.\  321904558}}
\date{\today}
\theoremstyle{plain}
\newtheorem{theo}{Theorem}
\newtheorem{lemma}[theo]{Lemma}
\newtheorem{conjecture}[theo]{Conjecture}
\newtheorem{theorem}[theo]{Theorem}
\newtheorem{proposition}[theo]{Proposition}
\newtheorem*{theo*raphael}{Theorem \ref{no14rungs}}
\newtheorem*{theo*arthur}{Theorem \ref{3RungLadderHasEdgeEPProperty}}
\begin{document}

\maketitle




\begin{abstract}
We prove that the ladder with $3$~rungs and the house graph have the edge-Erd\H{o}s-P\'{o}sa property, while ladders with $14$~rungs or more have not. Additionally, we prove that the latter bound is optimal in the sense that the only known counterexample graph does not permit a better result.
\end{abstract}

\section{Introduction} \label{sec:intro}

For a graph $H$, can we find many (vertex-)disjoint subgraphs that contain $H$ as a minor in a graph $G$? If not, can we find a bounded (vertex) set $S$ in $G$, called the \emph{hitting set}, such that $G - S$ does not contain $H$ as a minor?
If at least one of those statements holds for every $k\in\mathbb{N}$ and every graph $G$, then we say that $H$ has the \emph{Erd\H{o}s-P\'{o}sa property}. 
When we replace vertices with edges and search for edge-disjoint graphs and an edge hitting set, we arrive at the \emph{edge-Erd\H{o}s-P\'{o}sa property}.

This field owes its name to a result for cycles by Erd\H{o}s and P\'{o}sa in 1965 \cite{erdosposa65}.
Since then, the vertex version has been well researched. Robertson and Seymour \cite{robertson86} have shown as early as 1986 that $H$ has the (vertex-)Erd\H{o}s-P\'{o}sa property if and only if $H$ is a planar graph. 
The edge-Erd\H{o}s-P\'{o}sa property, however,  behaves quite differently. 
It is still true that non-planar graphs do not have the edge-Erd\H{o}s-P\'{o}sa property, as stated by Raymond and Thilikos in 2017 \cite{raymond17}, but the edge version fails even for some very simple planar graphs:
Bruhn, Heinlein and Joos \cite{bruhn18} have shown in 2018 that subcubic trees with large enough pathwidth and also ladders (see Figure~\ref{fig:LadderWith5Rungs}) with $71$~rungs or more do not have the edge-Erd\H{o}s-P\'{o}sa property.

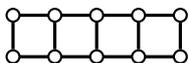
\begin{figure}[hbt] 
	\centering
	\begin{tikzpicture}[scale=1.1]
	\tikzstyle{ded}=[line width=0.8pt,double distance=1.2pt,draw=white,double=black]
	\tikzstyle{bubble}=[color=hellgrau,line width=6pt,fill=hellgrau,rounded corners=4pt]
	\def\runglen{0.5}
	\def\ladderlen{5}

	\clip (-0.5*\runglen*\ladderlen,-\runglen)
	rectangle (0.5*\runglen*\ladderlen,\runglen);

	\begin{scope}[shift={(0.5*\ladderlen*\runglen-0.5*\runglen,0)}]
	\draw[hedge] (0,0.5*\runglen) -- (-\ladderlen*\runglen+\runglen,0.5*\runglen);
	\draw[hedge] (0,-0.5*\runglen) -- (-\ladderlen*\runglen+\runglen,-0.5*\runglen);
	\foreach \i in {1,...,\ladderlen}{
	  \node[smallvx] (C\i) at (-\i*\runglen+\runglen,0.5*\runglen){};
	  \node[smallvx] (D\i) at (-\i*\runglen+\runglen,-0.5*\runglen){};
	  \draw[hedge] (C\i) -- (D\i);
	}
	\end{scope}	
	\end{tikzpicture}
\caption{A ladder with 5 rungs}\label{fig:LadderWith5Rungs}
\end{figure}

For positive results, as far as we know, the only graphs that are known to have the edge-Erd\H{o}s-P\'{o}sa property are cycles \cite{BHJ19,BJU19} (in particular $K_3$ \cite{DR05}), $K_4$ \cite{BH18} and $\Theta_r$ \cite{raymond17} (the multigraph on two vertices with $r$ parallel edges between them). The results on cycles and $\Theta_r$ are not that hard to come by. However, these graphs are also one of the simplest types of graphs there are. In contrast to that, the proof for $K_4$ is a lot of work even though a $K_4$ is just a slightly more complex graph than a $4$-cycle or a $\Theta_3$. 
As there are only few results, we speculate that positive results for the edge version are just that much harder to obtain. The complexity of the proof of the following result of ours supports this claim.

\begin{theorem} \label{3RungLadderHasEdgeEPProperty}
The ladder with $3$~rungs has the edge-{E}rd{\H o}s-{P\'o}sa property.
\end{theorem}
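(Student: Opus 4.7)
The plan is to prove Theorem~\ref{3RungLadderHasEdgeEPProperty} by induction on $k$, using a structural characterization of $L_3$-minors. Since $L_3$ is subcubic, $G$ contains $L_3$ as a minor if and only if $G$ contains a subdivision of $L_3$: two branch vertices $u,v\in V(G)$ joined by three internally vertex-disjoint paths $P_1,P_2,P_3$ with $|E(P_2)|,|E(P_3)|\ge 3$. I refer to such a configuration as a \emph{long theta}. Since $L_3$ is $2$-edge-connected, no bridge of $G$ lies in any $L_3$-minor, and one may freely restrict to $2$-edge-connected host graphs.

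The inductive step aims to produce a constant-size edge set $S\subseteq E(G)$ such that $G-S$ admits no $k-1$ edge-disjoint $L_3$-minors; applying the inductive hypothesis to $G-S$ then yields a hitting set $S'$ of size $f(k-1)$, so that $S\cup S'$ is a hitting set for $G$ of size $|S|+f(k-1)$, giving $f(k)=O(k)$. A naive attempt to take $S=E(T)$ for a fixed $L_3$-minor $T$ fails because subdivisions can make $|E(T)|$ arbitrarily large, so one cannot afford to delete all of $T$'s edges; the structure of the long theta must be exploited more carefully.

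The heart of the argument is therefore to find a bounded-size $S$ from a long theta $T=P_1\cup P_2\cup P_3$ in $G$ minimizing total edge count, and then to analyse the \emph{bridges} of $T$ in $G$ (the components of $G-V(T)$ together with their attachments to $T$). The case split is: if some bridge contains a long theta edge-disjoint from $T$, one augments the packing and reduces to the case $k-1$; otherwise all bridges attach to $T$ in a restricted way, and a constant number of edges near the branch vertices $u,v$, together with at most one edge on each of the two long paths, suffice to form $S$.

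The main obstacle is the case analysis verifying that this small $S$ actually hits every long theta in $G$: distinct long thetas can share long subpaths with $T$ and reroute through small bridges in intricate ways, so one must enumerate the possible reroutings and verify that each is either covered by $S$ or witnesses an augmenting long theta that produces an additional edge-disjoint $L_3$-minor. Controlling the iteration — showing that each augmentation strictly decreases a natural potential function (e.g.\ total edge count of $T$ plus number of attachment edges) so that the bound $f(k)$ remains polynomial — is the most delicate part of the proof.
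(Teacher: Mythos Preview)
Your proposal does not constitute a proof, and in fact the scheme you describe cannot work as stated. The inductive step you aim for---a constant-size edge set $S$ such that $G-S$ has no $k-1$ edge-disjoint $L_3$-minors---would yield $f(k)=O(k)$. But the edge hitting set for $L_3$-minors satisfies $f(k)=\Omega(k\log k)$: the standard expander-based constructions that witness the $\Theta(k\log k)$ lower bound for the vertex version have bounded degree, and in bounded-degree graphs the vertex and edge parameters differ only by constant factors. So no constant-size $S$ with the property you require can exist uniformly. Concretely, the assertion that ``a constant number of edges near the branch vertices $u,v$, together with at most one edge on each of the two long paths, suffice to form $S$'' is false; and the ``case analysis verifying that this small $S$ actually hits every long theta'' that you defer is not merely delicate---it cannot be completed, because the conclusion is wrong.

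Beyond the quantitative obstruction, the argument is only a plan: the bridge analysis is never carried out, the ``augments the packing and reduces to the case $k-1$'' branch is not made precise (finding a second edge-disjoint long theta does not by itself reduce $k$), and minimality of $T$ gives far weaker control over $T$-bridges than you assume.

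For contrast, the paper's proof takes a completely different route. It first reduces (via the vertex-\EPP\ for $L_3$) to graphs containing a single vertex $v$ meeting every ladder, then builds a carefully optimized spanning tree of $G-v$ whose leaves are neighbours of $v$ and studies its \emph{preleaves}. Bounding the preleaf structure and invoking an edge-\EPP\ result for $A$-$3$-trees leaves a graph whose blocks containing $v$ are circular orderings of short $\Theta_r$'s (the structural characterization of $2$-connected ladder-free graphs), where a direct counting argument finishes. This yields $f(k)=O(k^2)$ in the one-vertex case and $O(k^3\log k)$ overall; nothing resembling a minimal-theta/bridge decomposition appears.
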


From the proof of Theorem \ref{3RungLadderHasEdgeEPProperty} we can deduce that the house graph (see Chapter \ref{SmallLadder}) has the edge-Erd\H{o}s-P\'{o}sa property as well. 
Again, both of these graphs are just $6$-cycles respectively $5$-cycles with a chord (and both of them are small expansions of $\Theta_3$), but the proof takes quite some effort.
As a byproduct, we see that $(A,m)$-trees, that is trees that contain $m$ vertices of a given vertex set $A$, have the edge-Erd\H{o}s-P\'{o}sa property, too. These can be seen as a generalization of $A$-paths.

To clarify what happens for larger ladders, we improve upon the negative result by Bruhn, Heinlein and Joos \cite{bruhn18} mentioned above.

\begin{restatable}{theorem}{theoremFourteenRungs}  \label{no14rungs}
Ladders with 14 rungs or more do not have the edge-Erd\H{o}s-P\'{o}sa property.
\end{restatable}

The key ingredient for the negative proofs is a gadget that Bruhn et al.\ call \emph{condensed wall} (see Figure~\ref{fig:condWall}). As of now, all planar graphs that have the edge-Erd\H{o}s-P\'{o}sa property are minors of the condensed wall and all that do not have it are not minors of it. Therefore, the condensed wall might play a key role in trying to characterize the graphs that have the edge-Erd\H{o}s-P\'{o}sa property.
This leads to the following conjecture by Bruhn et al. \cite{bruhn18}

\begin{conjecture}[Bruhn, Heinlein and Joos \cite{bruhn18}] \label{intro:conjectureCondWall}
Let $H$ be a planar graph such that there is an integer $r$ such that the condensed wall of size $r$ contains $H$ as a minor. Then $H$ has the edge-Erd\H{o}s-P\'{o}sa property.
\end{conjecture}

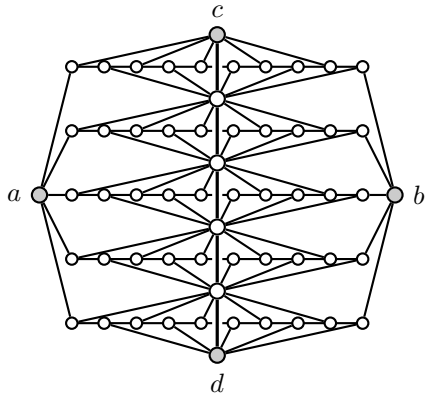
\begin{figure}[bht] 
\centering
\begin{tikzpicture}[scale=0.85]
\tikzstyle{tinyvx}=[thick,circle,inner sep=0.cm, minimum size=1.5mm, fill=white, draw=black]

\def\vstep{1}
\def\hstep{0.5}
\def\hwidth{9}
\def\hheight{4}

\def\totalheight{\hheight*\vstep}
\def\totalwidth{\hwidth*\hstep}
\pgfmathtruncatemacro{\minustwo}{\hwidth-2}
\pgfmathtruncatemacro{\minusone}{\hwidth-1}

\foreach \j in {0,...,\hheight} {
\draw[medge] (0,\j*\vstep) -- (\hwidth*\hstep,\j*\vstep);
\foreach \i in {0,...,\hwidth} {
\node[tinyvx] (v\i\j) at (\i*\hstep,\j*\vstep){};
}
}

\foreach \j in {1,...,\hheight}{
\node[hvertex] (z\j) at (0.5*\hwidth*\hstep,\j*\vstep-0.5*\vstep) {};
}
\pgfmathtruncatemacro{\plusvone}{\hheight+1}

\node[hvertex,fill=hellgrau,label=above:$c$] (z\plusvone) at (0.5*\totalwidth,\totalheight+0.5*\vstep) {};
\node[hvertex,fill=hellgrau,label=below:$d$] (z0) at (0.5*\totalwidth,-0.5*\vstep) {};

\foreach \j in {1,...,\plusvone}{
\pgfmathtruncatemacro{\subone}{\j-1}
\draw[line width=1.3pt,double distance=1.2pt,draw=white,double=black] (z\j) to (z\subone);
\foreach \i in {0,2,...,\hwidth}{
\draw[medge] (z\j) to (v\i\subone);
}
}

\foreach \j in {0,...,\hheight}{
\foreach \i in {1,3,...,\hwidth}{
\draw[medge] (z\j) to (v\i\j);
}
}

\pgfmathtruncatemacro{\minusvone}{\hheight-1}
\node[hvertex,fill=hellgrau,label=left:$a$] (a) at (-\hstep,0.5*\totalheight) {};
\foreach \j in {0,...,\hheight} {
\draw[medge] (a) -- (v0\j);
}

\node[hvertex,fill=hellgrau,label=right:$b$] (b) at (\totalwidth+\hstep,0.5*\totalheight) {};
\foreach \j in {0,...,\hheight} {
\draw[medge] (v\hwidth\j) to (b);
}
\end{tikzpicture}
\caption{A condensed wall of size 5.}
\label{fig:condWall}
\end{figure}

Theorem~\ref{no14rungs} is optimal in the sense that every counterexample based on a condensed wall (and recall, we do not know others) will not work for ladders with fewer than $14$~rungs: Indeed such ladders are minors of the condensed wall, see Proposition~\ref{tools:prop:13rungLadders}.

Do ladders with $4$ to $13$~rungs have the edge-Erd\H{o}s-P\'{o}sa property? 
We do not know, but this question can be seen as a test for the condensed wall gadget. If it is proven that ladders with 13~rungs still do not possess the edge-Erd\H{o}s-P\'{o}sa property, then clearly there is a counterexample graph not based on a condensed wall. However, if it is shown that ladders with 13~rungs have the edge-Erd\H{o}s-P\'{o}sa property, then this is yet another strong indication that the condensed wall plays a key role for this property.

For an overview and a comparison of the ordinary Erd\H{o}s-P\'{o}sa property and its edge counterpart, we recommend the appendix of \cite{bruhn17} and \cite{RST16}. 
A large list of Erd\H{o}s-P\'{o}sa results can be found on Raymond's webpage \cite{raymondweb}. 

The structure of this article is as follows: In Section~\ref{sec:prelim}, we define the objects we work with, such as the edge-\EPP{}, ladders and the condensed wall. Section~\ref{sec:toolbox} then contains the construction and proof that shows that long ladders do not have the edge-\EPP, that is, the proof of Theorem~\ref{no14rungs}. We will then move on to the small ladder and prove Theorem~\ref{3RungLadderHasEdgeEPProperty} in Section~\ref{SmallLadder}. In the same section, we also show that the House Graph has the edge-\EPP . Finally, we close by discussing our results in Section~\ref{sec:discussion}.

\section{Definitions} \label{sec:prelim}

In this section, we will state the basic definitions needed for this paper. 
Throughout, we will use standard notation as introduced in Diestel's textbook \cite{diestel2006}.

A class of graphs $\mathcal{H}$ has the \emph{edge-Erd\H{o}s-P\'{o}sa property} if there exists a function $f: \N \rightarrow \mathbb{R}$ such that for every graph $G$ and every integer $k$, there are $k$ edge-disjoint graphs in $G$ each isomorphic to a member of $\mathcal{H}$ or there is an edge set $X$ of $G$ of size at most $f(k)$ meeting all subgraphs in $G$ that are isomorphic to members of $\mathcal{H}$. We call $f$ an \emph{edge-\EP\ function} for  $\mathcal{H}$. To simplify notation, we say that a graph $H$ has the edge-\EPP\ if the set of all graphs that contain a minor isomorphic to $H$ has the edge-\EPP. Note that for a graph $H$ with a maximum degree~of~3, a graph $G$ contains $H$ as a minor if and only if it contains a subdivision of $H$ \cite{bruhn18}.

Very shortly we also need to deal with the (vertex-)\EPP: just replace all occurences of `edge/edges' by `vertex/vertices'

Next, we define an \emph{elementary ladder $L$} as a simple graph consisting of a vertex set $V(L) = \{u_1, u_2, \ldots , u_n, v_1, v_2, \ldots , v_n\}$ and edge set $E(L) = \{u_{i}u_{i+1} | i \in [n - 1] \} \cup \{v_{i}v_{i+1} | i \in [n - 1] \} \cup \{u_{i}v_{i} | i \in [n] \}$ for some $n \in \N, n \geq 1$.
The $n$ paths (of length one) $u_{i} v_{i}, i \in [n]$ are called \emph{rungs} of $L$, where the two paths (of length $n-1$) $u_1 u_2 u_3 \ldots u_n$ and $v_1 v_2 v_3 \ldots v_n$ are called \emph{stringers} of $L$.
The \emph{size} or \emph{length} of a ladder is given by its number of rungs, i.e. the size of $L$ is $n$.
A \emph{ladder} is a subdivision of an elementary ladder. We adapt the notion of rungs, stringers and size for ladders from their counterparts in elementary ladders.
A \emph{subladder} $L'$ a subgraph $L' \subseteq L$, where both $L'$ and $L$ are ladders and every rung $R$ of $L'$ is also an entire rung of $L$.

The main gadget used for proving that long ladders do not have the edge-Erd\H{o}s-P\'{o}sa property is a wall-like structure called a \emph{condensed wall} introduced by Bruhn et. al. \cite{bruhn18}. A condensed wall $W$ of size $r \in \N$ is the graph consisting of the following:
\begin{itemize}
\item For every $j \in [r]$, let $P^j = {u^j}_1, \ldots , {u^j}_{2r}$ be a path of length $2r - 1$ and for $j \in \{0\} \cup [r]$, let $z_j$ be a vertex. Moreover, let $a$, $b$ be two further vertices.
\item For every $i, j \in [r]$, add the edges $z_{j-1} {u^j}_{2i - 1}, z_{j} {u^j}_{2i}, z_{i-1} {z_i}, a {u^j}_{1}$ and $b {u^j}_{2r}$.
\end{itemize}
We define $c = z_0$ and $d = z_r$ and refer to
\begin{displaymath}
W_j = W[ \{ {u^j}_1, \ldots , {u^j}_{2r}, z_{j - 1}, z_j \} ]
\end{displaymath}

as the \emph{j-th layer of W}. 
Note that the layers of $W$ are precisely the blocks of $W - \{a, b\}$.
We will refer to the vertices $a, b$ quite often, and whenever we write $a$ or $b$ in this article, we refer to those vertices in a condensed wall.
The vertices connecting the layers of $W$ are $z_i, i \in \{0\} \cup [r]$, and we will call those \emph{bottleneck vertices}. This includes the vertices $c$ and $d$.
The edges $z_{i-1} z_i, i \in [r]$ are called \emph{jump-edges}.

For vertices $a, b, c, d$, an \emph{($a$-$b$, $c$-$d$)-linkage} is the vertex-disjoint union of an $a$-$b$-path with a $c$-$d$-path.

\section{Long Ladders} \label{sec:toolbox}

To show that a ladder $L$ does not have the edge-Erd\H{o}s-P\'{o}sa property, we explicitly construct a counterexample graph~$G^*$ for every given length $l$ of the ladder and size $r-1$ of the hitting set. So let $r \geq 2$ and $l \geq 14$ be some integers. Then we construct $G^*$ in the following way:

\begin{itemize}
\item Let $l_A \geq 7$ and $l_C \geq 4$ such that $l_A + l_C + 3 = l$.
\item Let $L_A$ be an elementary ladder of length $l_A$ and let $L_C$ be an elementary ladder of length $l_C$.
\item Let $G^*$ be the union of $L_A$ and $L_C$, where every edge is replaced by $r$~internally disjoint paths of length~$2$.
\item Add a condensed wall $W$ of size $2r$ to $G^*$.
\item Connect the stringers at one end of $L_A$ and $L_C$ to $W$ with $r$ internally disjoint paths of length~$2$ each as in Figure~\ref{fig:modifiedGadget}.
\end{itemize}

A depiction of $G^*$ can be found in Figure~\ref{fig:modifiedGadget}.
Let $A$ be the component of $G^* - W$ that contains a ladder with $l_A$~rungs, and let $C$ be the other component of $G^* - W$.

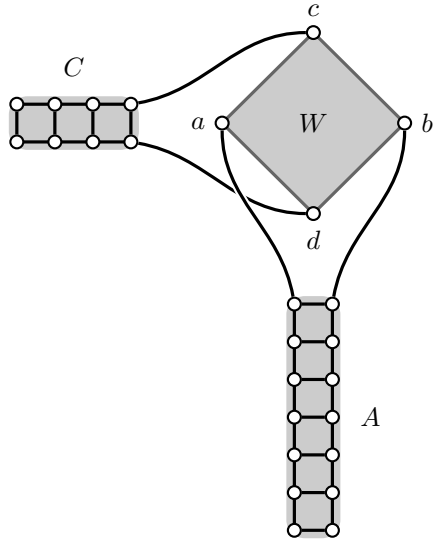
\begin{figure}[bht] 
\centering
\begin{tikzpicture}[scale=1.0]
\tikzstyle{ded}=[line width=0.8pt,double distance=1.2pt,draw=white,double=black]
\tikzstyle{bubble}=[color=hellgrau,line width=6pt,fill=hellgrau,rounded corners=4pt]
\def\step{1.2}
\def\runglen{0.5}
\def\ladderlen{4}
\def\offset{2*\step}

\clip (-\runglen*\ladderlen-2.2,-\runglen*\ladderlen-4.0)
rectangle (2,2);


\node[smallvx,label=left:$a$] (a) at (-\step,0){};
\node[smallvx,label=right:$b$] (b) at (\step,0){};
\node[smallvx,label=above:$c$] (c) at (0,\step){};
\node[smallvx,label=below:$d$] (d) at (0,-\step){};

\begin{scope}[on background layer]
\draw[hedge,color=dunkelgrau,fill=hellgrau] (a.center) -- (c.center) -- (b.center) -- (d.center) -- cycle;
\end{scope}

\node at (0,0) {$W$};

\begin{scope}[shift={(-\offset,0)}]
\draw[hedge] (0,0.5*\runglen) -- (-\ladderlen*\runglen+\runglen,0.5*\runglen);
\draw[hedge] (0,-0.5*\runglen) -- (-\ladderlen*\runglen+\runglen,-0.5*\runglen);
\foreach \i in {1,...,\ladderlen}{
  \node[smallvx] (C\i) at (-\i*\runglen+\runglen,0.5*\runglen){};
  \node[smallvx] (D\i) at (-\i*\runglen+\runglen,-0.5*\runglen){};
  \draw[hedge] (C\i) -- (D\i);
}
\node at (-0.5*\ladderlen*\runglen+0.5*\runglen,0.5*\runglen+0.5) {$C$};
\end{scope}

\pgfmathtruncatemacro{\llen}{\ladderlen+3}

\begin{scope}[shift={(0,-\offset)}]
\draw[hedge] (-0.5*\runglen,0) -- (-0.5*\runglen,-\llen*\runglen+\runglen);
\draw[hedge] (0.5*\runglen,0) -- (0.5*\runglen,-\llen*\runglen+\runglen);
\foreach \i in {1,...,\llen}{
  \node[smallvx] (A\i) at (-0.5*\runglen,-\i*\runglen+\runglen){};
  \node[smallvx] (B\i) at (0.5*\runglen,-\i*\runglen+\runglen){};
  \draw[hedge] (A\i) -- (B\i);
}
\node at (0.5*\runglen+0.5,-0.5*\llen*\runglen+0.5*\runglen) {$A$};
\end{scope}

%
%
%
%

\begin{scope}[on background layer]
\draw[bubble] (A1.center) -- (A\llen.center) -- (B\llen.center) -- (B1.center) -- cycle; 
\draw[bubble] (C1.center) -- (C\ladderlen.center) -- (D\ladderlen.center) -- (D1.center) -- cycle; 
\end{scope}






\draw[ded,out=10,in=180] (C1) to (c);
\draw[ded,out=-10,in=180] (D1) to (d);

\draw[ded,out=100,in=-90] (A1) to (a);
\draw[ded,out=80,in=-90] (B1) to (b);
\end{tikzpicture}
\caption{Construction in Theorem~\ref{no14rungs} for a ladder with 14~rungs. Note that all edges drawn represent $r$ internally disjoint paths of length~$2$.}
\label{fig:modifiedGadget}
\end{figure}

To show that a ladder $L$ does not have the edge-Erd\H{o}s-P\'{o}sa property, we first show that we can always find a ladder of length $l$ in our counterexample graph $G^*$, even with $r - 1$ edges being deleted. This implies there can be no edge hitting set regardless of its size, as $r$ was arbitrary.
Afterwards, we will show that there can never be two edge-disjoint ladders of length $l$ in $G^*$, implying that the edge-Erd\H{o}s-P\'{o}sa property fails even for $k = 2$.
The first part will be relatively easy.

\subsection{Finding one ladder}

For our embedding in $W$, we introduce a graph called an \emph{X-wing}. An X-wing is a subgraph $G = L_1 \cup P_1 \cup \ldots \cup P_4$, where $L_1$ is a ladder with $3$~rungs whose first and last rung we call $R_1$ and $R_3$, respectively, and $P_1, P_2, P_3, P_4$ are pairwise (vertex-)disjoint paths such that $P_1$ is a $R_1$--$a$~path, $P_2$ is a $R_1$--$b$~path, $P_3$ is a $R_3$--$c$~path and $P_4$ is a $R_3$--$d$~path. Furthermore, we require the interior of $P_1, P_2, P_3$ and $P_4$ to be (vertex-)disjoint from $L_1$. For an illustration, see Figure~\ref{fig:LinkageWith3RungsInWall}.

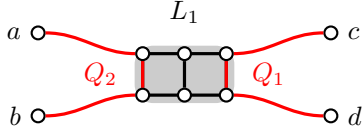
\begin{figure}[hbt] 
	\centering
	\begin{tikzpicture}[scale=1.1]
	\tikzstyle{ded}=[line width=0.8pt,double distance=1.2pt,draw=white,double=black]
	\tikzstyle{bubble}=[color=hellgrau,line width=6pt,fill=hellgrau,rounded corners=4pt]
	\def\runglen{0.5}
	\def\ladderlen{3}
	\def\offset{5*\runglen}
	
	\clip (-2*\runglen*\ladderlen-3.2,-1)
	rectangle (2*\runglen*\ladderlen+3.2,1.5);

	\begin{scope}[shift={(-0.5*\ladderlen*\runglen + 1.5*\runglen,0)}]
	\draw[hedge] (0,0.5*\runglen) -- (-\ladderlen*\runglen+\runglen,0.5*\runglen);
	\draw[hedge] (0,-0.5*\runglen) -- (-\ladderlen*\runglen+\runglen,-0.5*\runglen);
	\foreach \i in {1,...,\ladderlen}{
	  \node[smallvx] (C\i) at (-\i*\runglen+\runglen,0.5*\runglen){};
	  \node[smallvx] (D\i) at (-\i*\runglen+\runglen,-0.5*\runglen){};
	  \draw[hedge] (C\i) -- (D\i);
	}
	\node at (-0.5*\ladderlen*\runglen+0.5*\runglen,0.5*\runglen+0.5) {$L_1$};
	\node[smallvx,label=left:$a$] (a) at (-\ladderlen*\runglen - 1.5*\runglen,\runglen){};
	\node[smallvx,label=left:$b$] (b) at (-\ladderlen*\runglen - 1.5*\runglen,-\runglen){};
	\node[smallvx,label=right:$c$] (c) at (2.5*\runglen,\runglen){};
	\node[smallvx,label=right:$d$] (d) at (2.5*\runglen,-\runglen){};
	\end{scope}

		\begin{scope}[on background layer]
	\draw[bubble] (C1.center) -- (C\ladderlen.center) -- (D\ladderlen.center) -- (D1.center) -- cycle; 
	\end{scope}

	\draw[red, hedge] (C1)  -- (D1)  node [midway, xshift = 0.2cm, right] {$Q_1$} ;  
	\draw[red, hedge, out=0,in=180] (a) to (C\ladderlen);
	\draw[red, hedge, out=0,in=180] (b) to (D\ladderlen);
	
	\draw[red, hedge] (C\ladderlen)  -- (D\ladderlen)  node [midway, xshift = -0.2cm, left] {$Q_2$} ;  
	\draw[red, hedge, out=180,in=0] (c) to (C1);
	\draw[red, hedge, out=180,in=0] (d) to (D1);
	
	\end{tikzpicture}
\caption{An ($a$-$b$, $c$-$d$)-linkage in an X-wing}
\label{fig:LinkageWith3RungsInWall}
\end{figure}

\begin{lemma} \label{opt:lemma:3rungs}
In every condensed wall $W$ of size $2r$, there exists an X-wing, even when $r-1$ edges of $W$ are deleted.
\end{lemma}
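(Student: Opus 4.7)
The plan is to use a pigeonhole argument on the $2r$ layers of $W$, combined with an explicit construction of an X-wing in an ``intact'' layer, with the four connecting paths routed through the bottleneck spine and auxiliary intact layers to avoid the deletions.

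Call a layer $W_j$ \emph{intact} if none of its edges (including the jump edge $z_{j-1}z_j$), nor either of the external edges $au^j_1$ or $bu^j_{4r}$, lies in the deleted set $F$ of size at most $r-1$. Each deletion spoils at most one layer's intactness, so at least $2r-(r-1)=r+1$ of the $2r$ layers are intact, and I can pick such a $j$ with $1<j<2r$. Inside $W_j$ I embed $L_1$ as a subdivision of $\Theta_3$ with branch vertices $z_{j-1},z_j$ and three internally disjoint $z_{j-1}$--$z_j$ paths: the length-three path $z_{j-1}{-}u^j_1{-}u^j_2{-}z_j$ (containing the first rung $u^j_1u^j_2$), the length-three path $z_{j-1}{-}u^j_{4r-1}{-}u^j_{4r}{-}z_j$ (containing the last rung $u^j_{4r-1}u^j_{4r}$), and the jump edge $z_{j-1}z_j$ (middle rung). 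All edges of this $L_1$ are intact by the choice of $j$.

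For the four connecting paths I take $P_a=au^j_1$. The remaining paths are routed via the spine and auxiliary intact layers. $P_b$ leaves $b$ through another intact layer $j'\neq j$, runs along the spine to $z_j$, and re-enters layer $j$ through a free edge $z_j u^j_k$ (for some even $k$ with $2<k<4r$) before walking along $u^j_k,u^j_{k-1},\ldots,u^j_3,u^j_2$ to the first-rung endpoint. The paths $P_c$ from $c=z_0$ and $P_d$ from $d=z_{2r}$ are routed along the two halves of the spine towards $z_{j-1}$ and $z_j$ respectively, from where they re-enter $W_j$ through free bottleneck-edges and walk through the middle stretch of $P^j$ to $u^j_{4r-1}$ and $u^j_{4r}$. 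Any deleted jump edge $z_{i-1}z_i$ is bypassed by the short detour $z_{i-1}{-}u^i_1{-}u^i_2{-}z_i$ inside a distinct intact layer~$i$; the pool of $r+1$ intact layers provides enough room to arrange all of these detours simultaneously.

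The main obstacle is maintaining disjointness of the four connecting paths with $L_1$ and with each other, since the target vertices $u^j_2$, $u^j_{4r-1}$, $u^j_{4r}$ are structurally sandwiched by $L_1$-vertices inside $W_j$ and the only way to enter layer $j$ from outside is through the bottlenecks $z_{j-1}$, $z_j$, both of which lie on $L_1$. I resolve this by routing each incoming path through an edge at $z_{j-1}$ or $z_j$ that $L_1$ does not use (each of these bottlenecks has degree $\Omega(r)$ in $W$ but only three of its edges appear in $L_1$), and by partitioning the long free middle stretch $u^j_3,\ldots,u^j_{4r-2}$ of $P^j$ into disjoint ``highway'' segments, one per path. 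After this bookkeeping the subgraph $L_1\cup P_a\cup P_b\cup P_c\cup P_d$ is an X-wing contained in $W$ with none of its edges in $F$, proving the lemma.
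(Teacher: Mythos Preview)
Your construction has a genuine gap that cannot be repaired within the one-layer approach.

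You place the entire $3$-rung ladder $L_1$ inside a single layer $W_j$, with branch vertices $z_{j-1}$ and $z_j$. The problem is that the only access points to layer $W_j$ from the rest of $W$ are $a,b$ (each via a single edge) and the two bottlenecks $z_{j-1},z_j$. Since both bottlenecks are \emph{interior branch vertices} of your $L_1$, every one of $P_b,P_c,P_d$ is forced to pass through $z_{j-1}$ or $z_j$, i.e.\ through an interior vertex of $L_1$. Your proposed fix --- routing along edges at $z_{j-1},z_j$ that $L_1$ does not use --- addresses edge-disjointness only, but the X-wing requires \emph{vertex}-disjointness: its whole purpose is to contain an $(a\text{--}b,c\text{--}d)$-linkage. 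Concretely, any $c$--$d$ path in $W-\{a,b\}$ must traverse every cut vertex $z_i$, so in particular it uses $z_j$; at the same time, with your $P_b$, every $a$--$b$ path inside your candidate X-wing is forced through $z_j$ as well (from $u^j_2$ the only way to reach $b$ without backtracking is along $P_b$, which enters $z_j$). Hence no $(a\text{--}b,c\text{--}d)$-linkage exists in your subgraph, and it is not an X-wing in the sense the paper needs.

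The paper's proof avoids this by using \emph{two} adjacent intact layers $W_{i-1},W_i$: the $3$-rung ladder is embedded so that $a$ and $b$ are themselves endvertices of one outer rung (making $P_a,P_b$ trivial), while the other outer rung sits at $z_{i-2},z_{i-1}$, from which $P_c$ runs upward through layers $1,\ldots,i-2$ and $P_d$ downward (via $z_i$) through layers $i+1,\ldots,2r$. These two paths live on opposite sides of the spine and never re-enter $W_{i-1}\cup W_i$, so full vertex-disjointness is immediate. The pigeonhole step is then: $r-1$ deletions leave at least $r+1$ of the $2r$ layers intact (including their edges to $a,b$), and among $r+1$ layers in a row of $2r$ some two must be adjacent.
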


\begin{proof}
After the deletion of $r-1$ edges, there are still two complete adjacent layers $W_{i-1}$ and $W_i$ of $W$ whose edges to $a$ and $b$ are still present. There, we can embed a graph $X$ as  in Figure~\ref{fig:3RungsInWall}.
In $W$, there were $2r$ edge-disjoint paths connecting $z_{i-2}$ to $c$ and another $2r$ edge-disjoint paths, disjoint from the first set, connecting $z_i$ to $d$ without using $a$ or $b$. So with $r-1$ edges being deleted, there is still at least one (in fact $r+1$) of each kind left. Together with $X$, they form an X-wing.
\qed
\end{proof}

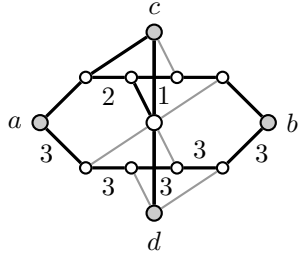
\begin{figure}[hbt] 
\centering
\begin{tikzpicture}[scale=1.2]
\tikzstyle{tinyvx}=[thick,circle,inner sep=0.cm, minimum size=1.5mm, fill=white, draw=black]

\def\vstep{1}
\def\hstep{0.5}
\def\hwidth{3}
\def\hheight{1}

\def\totalheight{\hheight*\vstep}
\def\totalwidth{\hwidth*\hstep}
\pgfmathtruncatemacro{\minustwo}{\hwidth-2}
\pgfmathtruncatemacro{\minusone}{\hwidth-1}

\foreach \j in {0,...,\hheight} {
\draw[ledge] (0,\j*\vstep) -- (\hwidth*\hstep,\j*\vstep);
\foreach \i in {0,...,\hwidth} {
\node[tinyvx] (v\i\j) at (\i*\hstep,\j*\vstep){};
}
}

\foreach \j in {1,...,\hheight}{
\node[hvertex] (z\j) at (0.5*\hwidth*\hstep,\j*\vstep-0.5*\vstep) {};
}
\pgfmathtruncatemacro{\plusvone}{\hheight+1}

\node[hvertex,fill=hellgrau,label=above:$c$] (z\plusvone) at (0.5*\totalwidth,\totalheight+0.5*\vstep) {};
\node[hvertex,fill=hellgrau,label=below:$d$] (z0) at (0.5*\totalwidth,-0.5*\vstep) {};

\foreach \j in {1,...,\plusvone}{
\pgfmathtruncatemacro{\subone}{\j-1}
\draw[line width=1.3pt,double distance=1.2pt,draw=white,double=black] (z\j) to (z\subone);
\foreach \i in {0,2,...,\hwidth}{
\draw[ledge] (z\j) to (v\i\subone);
}
}

\foreach \j in {0,...,\hheight}{
\foreach \i in {1,3,...,\hwidth}{
\draw[ledge] (z\j) to (v\i\j);
}
}

\pgfmathtruncatemacro{\minusvone}{\hheight-1}
\node[hvertex,fill=hellgrau,label=left:$a$] (a) at (-\hstep,0.5*\totalheight) {};
\foreach \j in {0,...,\hheight} {
\draw[ledge] (a) to (v0\j);
}

\node[hvertex,fill=hellgrau,label=right:$b$] (b) at (\totalwidth+\hstep,0.5*\totalheight) {};
\foreach \j in {0,...,\hheight} {
\draw[ledge] (v\hwidth\j) to (b);
}

	\draw[rededge] (z2)  -- (z1)  node [near end, xshift=-0.1cm, right] {$1$} ;  
	\draw[hedge] (z1)  to (v11) ;
	\draw[hedge] (z2)  to (v01) ;	
	\draw[rededge] (v01) -- (v11) node [midway, yshift=-0.25cm] {$2$}; 
	\draw[hedge] (v01) to (a)   ;	
	\draw[hedge] (v11) to (v21) ;
	\draw[hedge] (v21) to (v31) ;
	\draw[hedge] (v31) to (b)   ;
	\draw[rededge] (a)   -- (v00) node [midway, yshift=-0.1cm, left] {$3$}; 
	\draw[rededge] (v00) -- (v10) node [midway, yshift=-0.25cm] {$3$};
	\draw[rededge] (v10) -- (v20) node [very near end, yshift=-0.25cm] {$3$};
	\draw[rededge] (v20) -- (v30) node [midway, yshift=0.25cm] {$3$};
	\draw[rededge] (v30) -- (b)   node [midway, yshift=-0.1cm, right] {$3$};
	\draw[hedge] (z1)  to (z0)  ;
		

\end{tikzpicture}
\caption{An X-wing (thick edges) in a condensed wall of size 2. Edges belonging to rungs are labelled and red.}
\label{fig:3RungsInWall}
\end{figure}

The following lemma yields one part of the proof of Theorem~\ref{no14rungs}.

\begin{lemma} \label{tools:lem:oneDirectionOfMainProof}
For every integer $l \geq 14$ and every function $f: \N \rightarrow \R$, there exists an $r \in \N$ such that the graph $G^*$ (as defined in the beginning of the section) contains no set $B$ of at most $f(2)$ edges such that $G - B$ does not contain a ladder with $l$~rungs as a minor.
\end{lemma}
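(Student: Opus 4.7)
The plan is to show that for every integer $r \geq 2$, the counterexample graph $G^{*}$ constructed at the start of this section contains a ladder with $l$ rungs as a subgraph even after any set $B$ of at most $r-1$ edges has been removed. Since $r$ is arbitrary, no function $f\colon\N \to \R$ can bound the size of an edge-hitting set for $G^{*}$, which proves the lemma.

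First, I would exploit the $r$-fold parallelism built into $G^{*}$. In each of $A$, $C$, and the four bridging bundles between $A$ and $\{a,b\}$ and between $C$ and $\{c,d\}$, every edge of the underlying elementary ladder (or connection) has been replaced by $r$ internally disjoint length-$2$ paths. Because $|B|\leq r-1$, for each such slot at least one of the $r$ parallel paths avoids $B$. Choosing a surviving path per slot yields a subdivision of $L_A$ inside $A-B$ (a ladder with $l_A$ rungs), a subdivision of $L_C$ inside $C-B$ (a ladder with $l_C$ rungs), and four surviving length-$2$ bridging paths from the $W$-side stringer endpoints of $A$ to $a,b$ and from those of $C$ to $c,d$. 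Simultaneously, Lemma~\ref{opt:lemma:3rungs} applied to $W$ (which has size $2r$) furnishes an X-wing in $W-B$: a $3$-rung ladder $L_1$ together with a pair of disjoint paths from its first rung to $a,b$ and a pair of disjoint paths from its last rung to $c,d$.

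Second, I would concatenate these pieces into a single ladder. Starting at the far end of $A$, traverse the top stringer of $A$ (passing $l_A$ rungs), cross a surviving bridging path to $a$, follow the X-wing path from $a$ to the top of the first rung of $L_1$, continue along the top stringer of $L_1$ (contributing its $3$ rungs), leave $L_1$ by the X-wing path from the top of its last rung to $c$, cross a surviving bridging path into $C$, and finally traverse the top stringer of $C$ (passing $l_C$ rungs). The bottom stringer is built analogously through $b$ and $d$. Because $A-W$ and $C-W$ are distinct components of $G^{*}-W$, the bridging paths meet $W$ only at $\{a,b,c,d\}$, and the X-wing lives inside $W$, the selected pieces are pairwise edge-disjoint, and the result is a subdivision of an elementary ladder with $l_A + 3 + l_C = l$ rungs contained in $G^{*}-B$.

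The main obstacle is bookkeeping rather than combinatorial depth: one must check that the two stringers so constructed remain internally disjoint and enter $a,b,c,d$ with the correct orientation, so that the $l_A$ rungs of $A$, the $3$ rungs of $L_1$, and the $l_C$ rungs of $C$ align into a genuine ladder without a twist. The $r$-fold replacement of edges and the bound $|B|\leq r-1$ force survival of at least one path per slot, while Lemma~\ref{opt:lemma:3rungs} absorbs the delicate deletions inside the condensed wall; together these make the assembly routine.
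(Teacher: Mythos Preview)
Your proposal is correct and follows essentially the same approach as the paper: find a surviving $l_A$-rung ladder in $A$, a surviving $l_C$-rung ladder in $C$, an X-wing in $W$ via Lemma~\ref{opt:lemma:3rungs}, and surviving bridging paths to glue them into an $l$-rung ladder. The paper's version is terser and does not spell out the per-slot survival argument or the orientation bookkeeping you flag at the end, but the underlying argument is identical.
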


\begin{proof}
We choose $r$ such that $r-1 = f(2)$. Recall $G^*$ as defined in the beginning of Section~\ref{sec:toolbox}. Let $B \subseteq E(G)$ contain at most $f(2)$ edges.
By construction of $G^*$, there are $r$ edge-disjoint ladders of size $l_A$ in $A$. Thus in $G^* - B$, we can find a ladder $L_1$ with $l_A$~rungs in $A-B$. Similarly, there is a ladder $L_2$ with $l_C$ rungs in $C-B$. Lemma~\ref{opt:lemma:3rungs} yields an X-wing $X$ in $W-B$. As there are $r$~disjoint connections between each stringer of $L_1, L_2$ and $W$, we can connect $L_1, L_2$ and $X$ to form a ladder with $l_A + 3 + l_C = l$~rungs in $G^*-B$.
\qed
\end{proof}

\subsection{Excluding two ladders}

The tactic for the proof of Theorem~\ref{no14rungs} will be to show that every ladder $U$ in $G^*$ must contain an X-wing and thus an ($a$-$b$, $c$-$d$)-linkage in $W$, which can be only present once (see Lemma~\ref{def:lemma:noTwoLinkages}). However, to prove that $U$ contains an X-wing, we must exclude every other possibility of an embedding of $U$ in $G^*$. Therefore, we will need several technical lemmas to get an upper bound for how large a ladder in $W$ can be without containing an X-wing.

%


\begin{lemma} [Bruhn et al. \cite{bruhn18}] \label{def:lemma:noTwoLinkages}
A condensed wall $W$ (of any size) does not contain two edge-disjoint ($a$-$b$, $c$-$d$)-linkages.
\end{lemma}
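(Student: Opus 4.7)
The plan is to pin down the rigid structure that every ($a$-$b$, $c$-$d$)-linkage must possess and then show that two such structures cannot coexist edge-disjointly within a single layer. A first observation is that the $c$-$d$ path cannot meet $a$ or $b$, so it lives entirely in $W - \{a, b\}$. Since the layers $W_1, \ldots, W_r$ are the blocks of $W - \{a, b\}$, meeting only at the bottleneck vertices $z_0, \ldots, z_r$, any $z_0$-$z_r$ path has to pass through every $z_i$ in order and cross each layer $W_j$ via a $z_{j-1}$-$z_j$ subpath. In particular, every bottleneck vertex lies on the $c$-$d$ path.

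The $a$-$b$ path, being vertex-disjoint from the $c$-$d$ path, therefore avoids every $z_i$. Removing the $z_i$'s from $W - \{a, b\}$ leaves only the hubs $a, b$ together with the pairwise-disjoint paths $P^1, \ldots, P^r$, where each $P^j$ is attached to $a$ at $u^j_1$ and to $b$ at $u^j_{2r}$. Consequently the $a$-$b$ path is forced to be the unique $a$-$b$ route through some single layer, namely $a, u^j_1, u^j_2, \ldots, u^j_{2r}, b$ for some index $j \in [r]$. This in turn forces the $c$-$d$ path to traverse layer $W_j$ via the jump edge $z_{j-1}z_j$, since every alternative $z_{j-1}$-$z_j$ route inside $W_j$ passes through some now-forbidden $u^j_i$.

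Assume for contradiction there are two edge-disjoint linkages, with $a$-$b$ paths in layers $j_1$ and $j_2$. If $j_1 = j_2$, both $a$-$b$ paths demand the single edge $au^{j_1}_1$, an immediate contradiction. Otherwise $j_1 \neq j_2$, and linkage 1 already occupies, inside layer $W_{j_1}$, the jump edge $z_{j_1-1}z_{j_1}$ (from its $c$-$d$ path) together with all $2r-1$ edges of $P^{j_1}$ (from its $a$-$b$ path). What remains of $W_{j_1}$ is only the collection of spokes $z_{j_1-1}u^{j_1}_{2i-1}$ and $z_{j_1}u^{j_1}_{2i}$; in this residual graph the $u^{j_1}_i$ are all leaves, and there is no $z_{j_1-1}$-$z_{j_1}$ path. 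Hence linkage 2's $c$-$d$ path cannot cross layer $W_{j_1}$, a contradiction.

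The main obstacle is the rigidity argument for the $a$-$b$ path: one has to argue carefully that once every bottleneck is forbidden, the only connected pieces of $W - \{a, b\}$ joining $a$ to $b$ are exactly the paths $P^j$, so that the $a$-$b$ route is uniquely determined by the single layer it enters. Once this rigidity is in place, the simple edge count inside the single layer $W_{j_1}$ finishes the proof immediately.
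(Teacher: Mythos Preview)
Your argument is correct. The paper does not supply its own proof of this lemma; it is stated with attribution to Bruhn, Heinlein and Joos~\cite{bruhn18} and used as a black box. Your write-up is therefore a self-contained proof where the paper has none, and the structural analysis you give---that the $c$-$d$ path must visit every bottleneck vertex, forcing the $a$-$b$ path to be exactly $a,u^j_1,\ldots,u^j_{2r},b$ for a unique $j$, which in turn pins the $c$-$d$ path to the jump edge in layer $W_j$---is precisely the rigidity one needs, and the final edge count inside $W_{j_1}$ is clean.

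One cosmetic slip: in the sentence ``Removing the $z_i$'s from $W-\{a,b\}$ leaves only the hubs $a,b$\ldots'' you mean to remove the $z_i$'s from $W$, not from $W-\{a,b\}$, since $a$ and $b$ are exactly the hubs you want to keep. The argument itself is unaffected.
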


When considering how large a ladder in a condensed wall can be, the two vertices $a$ and $b$ will be of great importance. Without them, we cannot form large ladders, as the next lemma will show:

\begin{lemma} \label{LadderInCondensedWall}
Let $W$ be a condensed wall. In $W - \{a, b\}$, the following holds:

\begin{enumerate}[label=\upshape(\roman*)]
\item Every ladder $L$ in $W - \{a, b\}$ has at most 5~rungs.  \emph{(Bruhn et al. \cite{bruhn18} )} 

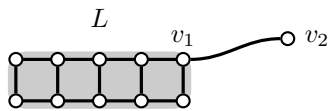
\begin{figure}[hbt] 
	\centering
	\begin{tikzpicture}[scale=1.1]
	\tikzstyle{ded}=[line width=0.8pt,double distance=1.2pt,draw=white,double=black]
	\tikzstyle{bubble}=[color=hellgrau,line width=6pt,fill=hellgrau,rounded corners=4pt]
	\def\runglen{0.5}
	\def\ladderlen{5}
	\def\offset{2.5*\runglen}
	
	\clip (-0.5*\runglen*\ladderlen,-1)
	rectangle (0.5*\runglen*\ladderlen+\offset+0.5,1.5);

	\begin{scope}[shift={(0.5*\ladderlen*\runglen,0)}]
	\draw[hedge] (0,0.5*\runglen) -- (-\ladderlen*\runglen+\runglen,0.5*\runglen);
	\draw[hedge] (0,-0.5*\runglen) -- (-\ladderlen*\runglen+\runglen,-0.5*\runglen);
	\foreach \i in {1,...,\ladderlen}{
	  \node[smallvx] (C\i) at (-\i*\runglen+\runglen,0.5*\runglen){};
	  \node[smallvx] (D\i) at (-\i*\runglen+\runglen,-0.5*\runglen){};
	  \draw[hedge] (C\i) -- (D\i);
	}
	\node at (-0.5*\ladderlen*\runglen+0.5*\runglen,0.5*\runglen+0.5) {$L$};
	\node at (0,\runglen) {$v_1$}; 
	\node[smallvx,label=right:$v_2$] (v2) at (\offset,\runglen){};

	\end{scope}

		\begin{scope}[on background layer]
	\draw[bubble] (C1.center) -- (C\ladderlen.center) -- (D\ladderlen.center) -- (D1.center) -- cycle; 
	\end{scope}


	\draw[hedge, out=180,in=0] (v2) to (C1);
	
	\end{tikzpicture}
\caption{A ladder with 5~rungs and a path}
\label{fig:LadderWith5RungsAndOnePath}
\end{figure}

\item There is no embedding of a graph as in Figure~\ref{fig:LadderWith5RungsAndOnePath} in $W - \{a, b\}$ such that the branch vertex corresponding to $v_1$ or $v_2$ is a bottleneck vertex.  

\item There is no embedding of a graph as in Figure~\ref{fig:LadderWith4RungsAndTwoPaths} or in Figure~\ref{fig:LadderWith3RungsAndTwoPathsAtSameEnd} in $W - \{a, b\}$ such that the branch vertices corresponding to one of $v_1$ and $v_2$ and one of $v_3$ and $v_4$ are bottleneck vertices.

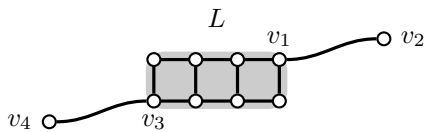
\begin{figure}[hbt] 
	\centering
	\begin{tikzpicture}[scale=1.1]
	\tikzstyle{ded}=[line width=0.8pt,double distance=1.2pt,draw=white,double=black]
	\tikzstyle{bubble}=[color=hellgrau,line width=6pt,fill=hellgrau,rounded corners=4pt]
	\def\runglen{0.5}
	\def\ladderlen{4}
	\def\offset{2.5*\runglen}
	
	\clip (-0.5*\runglen*\ladderlen-\offset-0.5,-1)
	rectangle (0.5*\runglen*\ladderlen+\offset+0.5,1.5);

	\begin{scope}[shift={(0.5*\ladderlen*\runglen,0)}]
	\draw[hedge] (0,0.5*\runglen) -- (-\ladderlen*\runglen+\runglen,0.5*\runglen);
	\draw[hedge] (0,-0.5*\runglen) -- (-\ladderlen*\runglen+\runglen,-0.5*\runglen);
	\foreach \i in {1,...,\ladderlen}{
	  \node[smallvx] (C\i) at (-\i*\runglen+\runglen,0.5*\runglen){};
	  \node[smallvx] (D\i) at (-\i*\runglen+\runglen,-0.5*\runglen){};
	  \draw[hedge] (C\i) -- (D\i);
	}
	\node at (-0.5*\ladderlen*\runglen+0.5*\runglen,0.5*\runglen+0.5) {$L$};
	\node at (0,\runglen) {$v_1$}; 
	\node[smallvx,label=right:$v_2$] (v2) at (\offset,\runglen){};
	\node at (-\ladderlen*\runglen + \runglen,-\runglen) {$v_3$}; 
	\node[smallvx,label=left:$v_4$] (v4) at (-\offset - \ladderlen*\runglen + \runglen,-\runglen){};

	\end{scope}

		\begin{scope}[on background layer]
	\draw[bubble] (C1.center) -- (C\ladderlen.center) -- (D\ladderlen.center) -- (D1.center) -- cycle; 
	\end{scope}


	\draw[hedge, out=180,in=0] (v2) to (C1);
	\draw[hedge, out=0,in=180] (v4) to (D\ladderlen);
	
	\end{tikzpicture}
\caption{A ladder with 4~rungs and two paths}
\label{fig:LadderWith4RungsAndTwoPaths}
\end{figure}

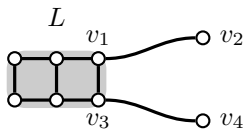
\begin{figure}[hbt] 
	\centering
	\begin{tikzpicture}[scale=1.1]
	\tikzstyle{ded}=[line width=0.8pt,double distance=1.2pt,draw=white,double=black]
	\tikzstyle{bubble}=[color=hellgrau,line width=6pt,fill=hellgrau,rounded corners=4pt]
	\def\runglen{0.5}
	\def\ladderlen{3}
	\def\offset{2.5*\runglen}
	
	\clip (-0.5*\runglen*\ladderlen,-1)
	rectangle (0.5*\runglen*\ladderlen+\offset+0.5,1.5);

	\begin{scope}[shift={(0.5*\ladderlen*\runglen,0)}]
	\draw[hedge] (0,0.5*\runglen) -- (-\ladderlen*\runglen+\runglen,0.5*\runglen);
	\draw[hedge] (0,-0.5*\runglen) -- (-\ladderlen*\runglen+\runglen,-0.5*\runglen);
	\foreach \i in {1,...,\ladderlen}{
	  \node[smallvx] (C\i) at (-\i*\runglen+\runglen,0.5*\runglen){};
	  \node[smallvx] (D\i) at (-\i*\runglen+\runglen,-0.5*\runglen){};
	  \draw[hedge] (C\i) -- (D\i);
	}
	\node at (-0.5*\ladderlen*\runglen+0.5*\runglen,0.5*\runglen+0.5) {$L$};
	\node at (0,\runglen) {$v_1$}; 
	\node[smallvx,label=right:$v_2$] (v2) at (\offset,\runglen){};
	\node at (0,-\runglen) {$v_3$}; 
	\node[smallvx,label=right:$v_4$] (v4) at (\offset,-\runglen){};

	\end{scope}

		\begin{scope}[on background layer]
	\draw[bubble] (C1.center) -- (C\ladderlen.center) -- (D\ladderlen.center) -- (D1.center) -- cycle; 
	\end{scope}


	\draw[hedge, out=180,in=0] (v2) to (C1);
	\draw[hedge, out=180,in=0] (v4) to (D1);
	
	\end{tikzpicture}
\caption{A ladder with 3~rungs and two paths at the same end}
\label{fig:LadderWith3RungsAndTwoPathsAtSameEnd}
\end{figure}

\end{enumerate}

\end{lemma}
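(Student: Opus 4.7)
The plan is to derive parts~(ii) and~(iii) from part~(i), which is quoted directly from Bruhn et al.~\cite{bruhn18}, by a uniform contradiction argument: I would assume a forbidden embedding exists and produce from it a ladder in $W-\{a,b\}$ with more than $5$~rungs, contradicting~(i).

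The first step is a structural reduction. Any ladder with at least two rungs is $2$-connected, and the remark following the definition of a condensed wall identifies the blocks of $W-\{a,b\}$ as exactly the layers $W_j$; hence the image of the core $n$-rung ladder (with $n\in\{3,4,5\}$) must lie inside a single layer~$W_j$, sharing vertices with other layers only through the bottlenecks $z_{j-1}$ and $z_j$ of $W_j$. This confines the problem to understanding which extra structure can be attached to a ladder inside a single layer, and how far it can reach through the two available bottleneck exits.

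For~(ii) I would argue as follows: if the branch vertex corresponding to $v_1$ sits at a bottleneck $z\in\{z_{j-1},z_j\}$, then the pendant path from $v_1$ to $v_2$ is edge-disjoint from the ladder, so it either remains inside $W_j$ or escapes through $z$ into the adjacent layer. In each case I plan to exploit the high local connectivity of each layer, namely that each bottleneck is joined by edges to $r$ alternating vertices of the layer's internal path $P^j$, to graft an additional rung onto the existing ladder at the corner~$v_1$, yielding a $6$-rung ladder in $W-\{a,b\}$ and contradicting~(i). The case where instead $v_2$ lies at a bottleneck is handled by the symmetric argument along the reversed pendant path. For~(iii) the same rung-grafting idea is applied with two pendant paths and two bottleneck branch vertices: in the $4$-rung configuration the two bottlenecks lie at opposite corners and I would extend the ladder independently on each end, while in the $3$-rung configuration both bottlenecks lie at the same end, giving simultaneous access to the jump-edge $z_{j-1}z_j$ and to two adjacent-layer fans, which allows enough rungs to be added on that end to push the total past~$5$.

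The step I expect to be the main obstacle is the case analysis inside the rung-grafting step: depending on exactly which vertices and edges of $W_j$ are already used by the core ladder and how the pendant paths are routed, one must verify that enough unused edges survive in the relevant layer(s) to build the extra rung(s) without reusing edges. The strategy should succeed because of the strong local connectivity of each layer (the many edges from each bottleneck to the internal path), which leaves plenty of room for the extension no matter how the rest of the embedding looks.
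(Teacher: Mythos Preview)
Your first step—confining the core ladder to a single layer $W_j$ via the block structure of $W-\{a,b\}$—is correct and is exactly what the paper does. But from there the two approaches diverge, and yours takes a detour that is both harder and unnecessary.

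The paper does not treat~(i) as a black box. It reuses the \emph{idea} behind~(i): every ``small cycle'' of the ladder (a cycle bounded by two adjacent rungs) inside $W_j$ must pass through $z_{j-1}$ or $z_j$, and any vertex lies in at most two small cycles of a ladder. The refinement for~(ii) and~(iii) is then a one-line observation about where $z_{j-1}$ and $z_j$ are forced to sit. If a pendant path reaches a bottleneck, then (since the path is internally disjoint from $L$ and the only exits from $W_j$ in $W-\{a,b\}$ are $z_{j-1},z_j$) some $z\in\{z_{j-1},z_j\}$ is either a corner of $L$ or lies off $L$ entirely. A bottleneck at a corner meets at most \emph{one} small cycle; a bottleneck off $L$ meets zero. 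Each constrained bottleneck therefore subtracts at least one from the count of four small cycles, cutting the rung bound to~$4$, then~$3$, then~$2$, which is precisely what (ii) and (iii) assert.

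Your rung-grafting plan, by contrast, asks you to exhibit concrete extra rungs attached to a ladder that is already extremal inside its layer. The ``high local connectivity'' you invoke is exactly what the cycle count has already exhausted: a $5$-rung ladder in $W_j$ uses each of $z_{j-1},z_j$ in two small cycles, so there is no slack left inside $W_j$. Passing through a bottleneck into an adjacent layer extends only \emph{one} stringer, whereas a new rung needs both stringers extended; and in the $3$-rung-two-paths configuration the two bottlenecks at the same end lead into \emph{different} adjacent layers with no direct link except the single jump edge, which cannot supply the three additional rungs you would need. The case analysis you flag as the main obstacle is a genuine obstacle, not a routine check, and I do not see how the grafting can be completed. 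The short route is to reopen the proof of~(i) and sharpen the count, rather than to quote its statement.
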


\begin{proof}
(i) Every cycle in $W - \{a, b\}$ must contain either $z_i$ or $z_{i-1}$ (or both). When considering the smallest cycles in a ladder (those that contain two adjacent rungs), every vertex can only be part of two of them. Therefore, there can be at most four of those small cycles, which implies $L$ has size at~most~5. (The same result has also been proven in Lemma~9 in Bruhn et al. \cite{bruhn18}.)

(ii) + (iii) Let $W_i$ be the layer of $W$ that contains $L$. $W_i$ is connected to $W - W_i$ by only four vertices: $a, b, z_{i - 1}$ and $z_i$. Therefore, each ladder stringer that is continued outside of $W_i$ without using $a$ or $b$ must use one of $z_{i - 1}$ and $z_i$. 

Now when a ladder stringer of $L$ leaves $W_i$ (more precisely, when there is a path connected to the end of the ladder stringer that leaves $W_i$) through a vertex $z$, $z$ must necessarily lie on the end of $L$ (or not on $L$ at all). When $z_{i - 1}$ or $z_i$ lie at the end of $L$, this implies that $L$ is actually shorter~than~5, as every cycle in a layer~$W_i$ of $W$ must contain either $z_i$ or $z_{i-1}$ (or both).

Therefore, $L$ has length at~most~4 if $z_i$ or $z_{i - 1}$ lies on one of its ends. If both $z_i$ and $z_{i-1}$ lie on (different) ends of the ladder, $L$ is reduced to have a length of at~most~3. Finally, $L$ has at most 2~rungs if both $z_i$ and $z_{i-1}$ were on the same end.
\qed
\end{proof}

The following lemma will already yield a useful (and tight, see Proposition~\ref{tools:prop:13rungLadders}) upper bound on the size of a ladder $L$ in a condensed wall $W$.

\begin{lemma} \label{maxSizeInWall}
In every condensed wall $W$, every ladder $L$ contains at most 13~rungs.
\end{lemma}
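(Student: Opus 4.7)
The plan is to case on $|V(L) \cap \{a,b\}|$. In each case I cut $L$ at whichever of $a, b$ lies on it and apply Lemma~\ref{LadderInCondensedWall} to the resulting sub-ladders inside $W - \{a,b\}$. The two easy cases handle themselves: when $V(L) \cap \{a,b\} = \emptyset$ we have $L \subseteq W - \{a,b\}$ and part~(i) immediately gives $n \leq 5$; when exactly one of $a, b$ lies in $V(L)$, removing that single special vertex splits $L$ into at most two vertex-disjoint sub-ladders in $W - \{a,b\}$ that together cover all but at most one rung of $L$, so part~(i) gives $n \leq 5 + 5 + 1 = 11$.

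The main case is $\{a,b\} \subseteq V(L)$. Cutting at both $a$ and $b$ produces at most three vertex-disjoint sub-ladders $L_1, L_2, L_3 \subseteq L$ inside $W - \{a,b\}$ with at most two destroyed rungs, which alone only yields the naive bound $n \leq 5+5+5+2 = 17$. To tighten to $13$, I use the fact that each sub-ladder comes with attached paths: the remnants of the stringers of $L$ running to $a$ and $b$ give each outer sub-ladder $L_1, L_3$ one attached path (to $a$ or $b$), while the middle $L_2$ carries two, one to each of $a, b$, with all attachments occurring at corners of the respective sub-ladders. Combining a sub-ladder with its attached paths reproduces the configurations of Figures~\ref{fig:LadderWith5RungsAndOnePath}--\ref{fig:LadderWith3RungsAndTwoPathsAtSameEnd} verbatim, and truncating any attached path at an internal bottleneck vertex would yield yet another such forbidden configuration. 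Parts~(ii) and~(iii) of Lemma~\ref{LadderInCondensedWall} therefore prohibit every attachment corner, and every internal vertex of every attached path, from being a bottleneck vertex. On the other hand, the cycle-covering argument underlying part~(i) forces every $5$-rung sub-ladder in $W - \{a,b\}$ to contain bottleneck vertices in specific interior positions of its ladder order. Balancing these two opposing sets of constraints across all configurations of sub-ladder sizes with $|L_1|+|L_2|+|L_3| > 13 - d$, where $d \leq 2$ counts the rungs destroyed by $a$ and $b$, yields the desired contradiction.

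The main obstacle is the subcase analysis for this last case. Each of $a$ and $b$ can be a subdivision or a branch vertex, located on a stringer or on a rung of $L$, and the pair $a, b$ may lie on the same stringer, on different stringers, or even on the same rung. For each such position I have to identify the corners of $L_1, L_2, L_3$, trace the attached paths back to $a$ or $b$, and check how the prohibitions from parts~(ii) and~(iii) interact with the bottleneck positions that part~(i) forces inside each $L_i$. I expect the tightest subcases to be those in which two of the three sub-ladders simultaneously attempt the maximum of $5$ rungs, since that is where the sum of sub-ladder sizes comes closest to the threshold $14$ and the wiggle room left by parts~(ii) and~(iii) is thinnest.
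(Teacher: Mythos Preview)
Your overall strategy matches the paper's: case on $|V(L)\cap\{a,b\}|$, and in the main case split $L-\{a,b\}$ into at most three maximal subladders and invoke Lemma~\ref{LadderInCondensedWall}. The difference is in which attached paths you use, and this is where your proposal stalls while the paper finishes in two lines.

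You equip the middle subladder $L_2$ with the two stringer-remnants running \emph{towards} $a$ and $b$, and each outer $L_1,L_3$ with one such path. But $a$ and $b$ are not bottleneck vertices, and since each is adjacent to every layer, a path from a corner of $L_2$ to $a$ can easily avoid all bottlenecks. So parts~(ii) and~(iii) do not apply to these paths, and your deduction that ``every attachment corner and every internal vertex of every attached path'' avoids bottlenecks does not follow; at best~(iii) tells you that not \emph{both} ends of $L_2$ can carry a bottleneck, which is too weak for the balancing argument you sketch. Nothing you have written excludes, for instance, $L_1$ having $5$~rungs in a layer~$W_j$ with its single attached path going straight to~$a$ inside~$W_j$.

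The paper instead observes that $L_2$ has \emph{four} continued stringers---two towards $L_1$, two towards $L_3$---and only two of these can pass through $a$ or $b$. The remaining two lie entirely in $W-\{a,b\}$ and connect $L_2$ to the other subladders; these are precisely paths to which part~(iii) applies, yielding $|L_2|\le 3$. Likewise each of $L_1,L_3$ has one stringer towards $L_2$ avoiding $\{a,b\}$, to which part~(ii) applies, giving $|L_1|,|L_3|\le 4$. The total $4+3+4=11<12$ finishes the proof without any of the subcase analysis you anticipated.
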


\begin{proof}
Suppose there were a ladder $L$ in $W$ with $l \geq 14$~rungs.
Then $L - \{a, b\}$ contains at most three (disjoint) inclusion-maximal subladders. By construction, each of them contains neither $a$ nor $b$. Additionally, as $a$ and $b$ were part of at most $2$~rungs of $L$, those subladders must together contain at least $l - 2 \geq 12$~rungs.

First, suppose there were at most two such subladders $L_1, L_2$. Then by Lemma~\ref{LadderInCondensedWall}~(i), each may only contain up to $5$ rungs, which sums up to $5 + 5 = 10 < 12$ rungs, a contradiction.

So we can assume that there are exactly three (disjoint inclusion-maximal) subladders $L_1, L_2$ and $L_3$ of $L - \{a, b\}$.
Then two of them (say $L_1$ and $L_3$) must contain the ends of $L$, while one of them (say $L_2$) lies in between.
Via the stringers of $L$, $L_2$ has four disjoint paths connecting it to $L_1$ and $L_3$. By Lemma~\ref{LadderInCondensedWall}~(iii), $L_2$ can therefore only contain up to $3$~rungs. (Note that only two paths may contain $a$ or $b$.)

Similarly, $L_1$ and $L_3$ have each two (disjoint) paths connecting them to $L_2$. 
One of them must contain $a$ or $b$ for each of $L_1$ and $L_3$, which implies that the other cannot contain $a$ or $b$.
By Lemma~\ref{LadderInCondensedWall}~(ii), we conclude that $L_1$ and $L_3$ may therefore contain at most $4$~rungs. This sums up to $4 + 3 + 4 = 11 < 12$~rungs for the three of them, a contradiction.
\qed
\end{proof}

In the following, we will prove a series of lemmas that deal with ladders and connecting paths in a couple of special cases. They will be used for the proof of Theorem~\ref{no14rungs}.

\begin{lemma} \label{tools:lem:StringersOutAtSameEndViaAB}
Let $L$ be a ladder in a condensed wall $W$. Let $R$ be a rung at the end of $L$, with endvertices $v_1, v_2$. Let $P_1$ be a $v_1$-$a$-path and let $P_2$ be a $v_2$-$b$-path (both in $W$). Furthermore, let the interior of $P_1$ and $P_2$ be disjoint from each other and disjoint from $L$. (See Figure~\ref{fig:LadderWith6RungsAndABPaths})

\begin{figure}[hbt] 
	\centering
	\begin{tikzpicture}[scale=1.1]
	\tikzstyle{ded}=[line width=0.8pt,double distance=1.2pt,draw=white,double=black]
	\tikzstyle{bubble}=[color=hellgrau,line width=6pt,fill=hellgrau,rounded corners=4pt]
	\def\runglen{0.5}
	\def\ladderlen{6}
	\def\offset{2.5*\runglen}
	
	\clip (-0.5*\runglen*\ladderlen,-1)
	rectangle (0.5*\runglen*\ladderlen+\offset+0.5,1.5);

	\begin{scope}[shift={(0.5*\ladderlen*\runglen,0)}]
	\draw[hedge] (0,0.5*\runglen) -- (-\ladderlen*\runglen+\runglen,0.5*\runglen);
	\draw[hedge] (0,-0.5*\runglen) -- (-\ladderlen*\runglen+\runglen,-0.5*\runglen);
	\foreach \i in {1,...,\ladderlen}{
	  \node[smallvx] (C\i) at (-\i*\runglen+\runglen,0.5*\runglen){};
	  \node[smallvx] (D\i) at (-\i*\runglen+\runglen,-0.5*\runglen){};
	  \draw[hedge] (C\i) -- (D\i);
	}
	\node at (-0.5*\ladderlen*\runglen+0.5*\runglen,0.5*\runglen+0.5) {$L$};
	\node at (0,\runglen) {$v_1$}; 
	\node[smallvx,label=right:$a$] (a) at (\offset,\runglen){};
	\node at (0,-\runglen) {$v_2$}; 
	\node[smallvx,label=right:$b$] (b) at (\offset,-\runglen){};

	\end{scope}

		\begin{scope}[on background layer]
	\draw[bubble] (C1.center) -- (C\ladderlen.center) -- (D\ladderlen.center) -- (D1.center) -- cycle; 
	\end{scope}


	\draw[hedge, out=180,in=0] (a) to (C1);
	\draw[hedge, out=180,in=0] (b) to (D1);
	
	\end{tikzpicture}
\caption{A ladder with 6~rungs and paths to $a$ and $b$}
\label{fig:LadderWith6RungsAndABPaths}
\end{figure}
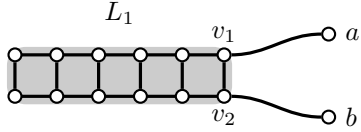
Let $L'$ be the union of $L, P_1$ and $P_2$.
Then $L'$ contains at most $6$ rungs.
\end{lemma}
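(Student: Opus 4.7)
Assume for contradiction that $L_1$ has at least $7$~rungs. We follow the strategy of the proof of Lemma~\ref{maxSizeInWall}. Consider $L_1 - \{a, b\}$: since $a$ and $b$ together destroy at most two rungs of $L_1$ (namely at most one rung each, in case they lie on a rung of $L_1$) and each can split $L_1$ at most once, the result consists of at most three inclusion-maximal subladders $L^1, L^2, L^3$ (ordered along $L_1$, some possibly empty), with $\sum_j |L^j| \ge l_1 - 2 \ge 5$. By Lemma~\ref{LadderInCondensedWall}(i), each $|L^j| \le 5$, and because two vertex-disjoint stringers of a ladder cannot both traverse the single bottleneck vertex $z_i$ connecting two adjacent layers of $W - \{a, b\}$, each $L^j$ is confined to a single layer $W_k$ of $W$.

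If $\{a, b\} \cap V(L_1) = \emptyset$, then $L_1 \subseteq W - \{a, b\}$, and Lemma~\ref{LadderInCondensedWall}(i) forces $l_1 \le 5$, contradicting $l_1 \ge 7$. Otherwise, the subladder $L^1$ containing the rung $R$ (when $R$ itself survives deletion of $a, b$) carries the two paths $P_1, P_2$ at its $R$-end going to $a$ and $b$, while at its opposite end one or two stringer continuations of $L_1$ pass through $a$ or $b$ to reach the next subladder. Because $L^1$ lies in a single layer, any such stringer continuation must leave this layer via a bottleneck vertex $z_{k-1}$ or $z_k$; truncating it there gives a path with a bottleneck-vertex endpoint, and the hypothesis of Lemma~\ref{LadderInCondensedWall}(ii) or (iii) is thereby available.

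A case analysis on the positions of $a$ and $b$ in $L_1$ (interior stringer, interior rung, or end-rung), carried out analogously to the three-subladder bookkeeping in the proof of Lemma~\ref{maxSizeInWall}, then yields $\sum_j |L^j| \le 4$ in each configuration; together with the at most two destroyed rungs this gives $l_1 \le 6$, the desired contradiction. The analogous reasoning applies to $L^2$ and $L^3$, where the paths emanating from them to the adjacent subladders (via stringers through $a$ or $b$) again supply the bottleneck-vertex endpoints required by Lemma~\ref{LadderInCondensedWall}(ii)--(iii).

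The main obstacle is the degenerate subcase where the deletion of $\{a, b\}$ produces a single subladder of size exactly $5$, which forces $a$ and $b$ to lie on the two distinct end-rungs of $L_1$. In this configuration $P_1$ and $P_2$ have their far endpoints essentially on the (now destroyed) end-rungs, so the standard Lemma~\ref{LadderInCondensedWall}(iii) application at the $R$-end fails to supply bottleneck-vertex endpoints. The argument then hinges on invoking Lemma~\ref{LadderInCondensedWall}(ii) at the \emph{opposite} end of $L^1$: the stringer of $L_1$ continuing through $b$ (say) into the rest of $L_1$ cannot stay in the layer of $L^1$ without sharing a bottleneck vertex with another stringer, so it exits the layer and provides the bottleneck-vertex endpoint needed to force $|L^1| \le 4$, contradicting $|L^1| = 5$.
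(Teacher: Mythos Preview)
You are overcomplicating the argument and, more importantly, you have overlooked the one observation that makes this lemma a two-line proof. Since $P_1$ is a $v_1$--$a$-path that is internally disjoint from $L_1$, the only vertex of $L_1$ on $P_1$ is $v_1$; in particular, if $a\in V(L_1)$ at all then $a=v_1\in R$. The same reasoning gives $b\in V(L_1)\Rightarrow b=v_2\in R$. Hence the maximum subladder $L_2$ of $L_1-R$ automatically avoids both $a$ and $b$, so Lemma~\ref{LadderInCondensedWall}(i) bounds it by $5$~rungs, and $L_1$ has at most $6$. This is exactly the paper's proof.

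Because you never use this observation, you embark on a three-subladder case analysis borrowed from Lemma~\ref{maxSizeInWall} that is not needed here. Worse, your ``main obstacle'' --- the degenerate subcase where $a$ and $b$ lie on the two \emph{distinct} end-rungs of $L_1$ --- is in fact impossible under the hypotheses: as argued above, neither $a$ nor $b$ can lie on any rung of $L_1$ other than $R$. So the configuration you spend the final paragraph trying to rule out simply cannot occur, and the delicate appeal to Lemma~\ref{LadderInCondensedWall}(ii) at the ``opposite end'' is unnecessary. The remaining cases in your plan are also left as a hand-wave (``A case analysis \ldots\ yields $\sum_j|L^j|\le 4$''), so even on its own terms the proposal is not a proof.
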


Note that in this lemma and all following ones, we allow the paths $P_i$ to have a length of zero, i.e. to consist only of a single vertex (e.g. $v_1 = a$).

\begin{proof}
Let $L_1$ be the maximum subladder of $L - R$. Then $L_1$ cannot contain $a$ and $b$. By Lemma~\ref{LadderInCondensedWall}~(i), $L_1$ can therefore contain at most $5$~rungs. Together with $R$, this implies at most $6$~rungs for $L$.
\qed
\end{proof}

\begin{lemma} \label{tools:lem:StringersOutAtSameEndViaAandZ}
Let $L$ be a ladder in a condensed wall $W$. Let $R$ be a rung at the end of $L$, with endvertices $v_1, v_2$. Let $P_1$ be a $v_1$-$a$-path and let $P_2$ be a $v_2$-$z$-path (both in $W$), where $z$ is a bottleneck vertex. Furthermore, let the interior of $P_1$ and $P_2$ be disjoint from each other and disjoint from $L$. (See Figure~\ref{fig:LadderWith5RungsAndAZPaths})

Finally, let $L'$ be the union of $L, P_1$ and $P_2$ and let $L'$ not contain $b$.

\begin{figure}[hbt] 
	\centering
	\begin{tikzpicture}[scale=1.1]
	\tikzstyle{ded}=[line width=0.8pt,double distance=1.2pt,draw=white,double=black]
	\tikzstyle{bubble}=[color=hellgrau,line width=6pt,fill=hellgrau,rounded corners=4pt]
	\def\runglen{0.5}
	\def\ladderlen{5}
	\def\offset{2.5*\runglen}
	
	\clip (-0.5*\runglen*\ladderlen,-1)
	rectangle (0.5*\runglen*\ladderlen+\offset+0.5,1.5);

	\begin{scope}[shift={(0.5*\ladderlen*\runglen,0)}]
	\draw[hedge] (0,0.5*\runglen) -- (-\ladderlen*\runglen+\runglen,0.5*\runglen);
	\draw[hedge] (0,-0.5*\runglen) -- (-\ladderlen*\runglen+\runglen,-0.5*\runglen);
	\foreach \i in {1,...,\ladderlen}{
	  \node[smallvx] (C\i) at (-\i*\runglen+\runglen,0.5*\runglen){};
	  \node[smallvx] (D\i) at (-\i*\runglen+\runglen,-0.5*\runglen){};
	  \draw[hedge] (C\i) -- (D\i);
	}
	\node at (-0.5*\ladderlen*\runglen+0.5*\runglen,0.5*\runglen+0.5) {$L$};
	\node at (0,\runglen) {$v_1$}; 
	\node[smallvx,label=right:$a$] (a) at (\offset,\runglen){};
	\node at (0,-\runglen) {$v_2$}; 
	\node[smallvx,label=right:$z$] (z) at (\offset,-\runglen){};

	\end{scope}

	\begin{scope}[on background layer]
	\draw[bubble] (C1.center) -- (C\ladderlen.center) -- (D\ladderlen.center) -- (D1.center) -- cycle; 
	\end{scope}


	\draw[hedge, out=180,in=0] (a) to (C1);
	\draw[hedge, out=180,in=0] (z) to (D1);
	
	\end{tikzpicture}
\caption{A ladder with 5~rungs and paths to $a$ and a bottleneck vertex $z$}
\label{fig:LadderWith5RungsAndAZPaths}
\end{figure}
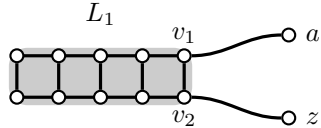

Then $L$ contains at most $5$ rungs.
\end{lemma}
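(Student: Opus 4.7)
My plan is to mirror the proof of Lemma~\ref{tools:lem:StringersOutAtSameEndViaAB} but replace the use of Lemma~\ref{LadderInCondensedWall}(i) with the stronger constraint from Lemma~\ref{LadderInCondensedWall}(ii). So I assume for contradiction that $L_1$ has at least $6$ rungs and set $L_2$ to be the maximum subladder of $L_1 - R$; then $L_2$ has at least $5$ rungs, and since $b \notin L$ while $L_2 \subseteq L_1 \subseteq L$, we also have $b \notin L_2$.

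The key step is to build, on top of $L_2$, a configuration matching Figure~\ref{fig:LadderWith5RungsAndOnePath} with a bottleneck vertex at the outside end. Let $S_2$ denote the piece of the $v_2$-stringer of $L_1$ that connects $v_2$ to the corresponding endpoint of the last rung of $L_2$ (the portion of stringer discarded when passing from $L_1$ to $L_2$). Then $S_2$ is a path in $L_1$ internally disjoint from $L_2$, and concatenating $S_2$ with $P_2$ gives a walk from a stringer-endpoint of $L_2$ to $z$; by extracting a subpath I obtain a path $Q$ between these same endpoints that is internally disjoint from $L_2$. Thus $L_2 \cup Q$ is a subdivision of the graph of Figure~\ref{fig:LadderWith5RungsAndOnePath} whose $v_2$-branch vertex is precisely the bottleneck vertex $z$.

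It remains to check that this subdivision actually sits inside $W - \{a,b\}$, after which Lemma~\ref{LadderInCondensedWall}(ii) supplies the contradiction. Avoiding $b$ is immediate from $b \notin L$. For $a$: the vertex $a$ is an endpoint only of $P_1$, and $P_1$ is internally disjoint from both $L_1$ and $P_2$, so $a$ can appear in $L_2 \cup Q \subseteq L_1 \cup P_2$ only as a shared endpoint, i.e.\ $a \in \{v_1\}$ or $a$ coincides with a branch vertex of $L_2$ or with an endpoint of $P_2$. The case $a=v_1$ is harmless because $v_1 \in R$ is removed when passing to $L_2$ and is not on $S_2$ or $P_2$; the remaining coincidences can be ruled out by the usual internal-disjointness bookkeeping (e.g.\ $a=z$ is impossible since $z$ is a bottleneck vertex).

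The main obstacle I expect is precisely this bookkeeping, and in particular the possibility that $z \in V(L_1)$, which would mean that $Q$ shortcuts through the ladder and $L_2 \cup Q$ is not literally a $5$-rung ladder with a pendant path but contains an extra chord. In that subcase I would either reroute along the stringer to obtain a smaller ladder still with a pendant path to some bottleneck vertex, or directly invoke the cycle structure around $z$ together with the bound on cycles through bottleneck vertices implicit in the proof of Lemma~\ref{LadderInCondensedWall}. Either way the underlying reason, a pendant bottleneck vertex attached to a $5$-rung ladder in $W-\{a,b\}$, is forbidden, which closes the argument.
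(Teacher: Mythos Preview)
Your approach is essentially the same as the paper's: pass to the maximum subladder $L_2$ of $L_1-R$, observe that $L_2$ together with the pendant path to the bottleneck vertex $z$ lives in $W-\{a,b\}$, and invoke Lemma~\ref{LadderInCondensedWall}(ii) to cap $L_2$ at $4$ rungs. The paper's write-up is a bit terser but the skeleton is identical.

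Your hedging at the end is unnecessary. Under the hypothesis that $P_1,P_2,L_1$ are internally disjoint (in the sense used throughout the paper: $P_i$ meets $L_1$ only at $v_i$, and $P_1\cap P_2=\emptyset$), you get $z\in V(L_1)$ only if $z=v_2$, in which case $Q=S_2$ is already a genuine pendant path ending at a bottleneck vertex and there is no ``extra chord''. Likewise $a\in V(L_1)$ only if $a=v_1\in R$, which is removed when passing to $L_2$, and $a\notin P_2$ since $a$ is neither $v_2$ nor a bottleneck vertex. So $L_2\cup Q\subseteq W-\{a,b\}$ holds cleanly with no rerouting needed; you can drop the last paragraph entirely.
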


\begin{proof}
Let $L_1$ be the maximum subladder of $L - R$. Then $L_1$ cannot contain $a$ and $b$. If $L_1$ contains at least $2$ rungs, it must be entirely contained in a single layer $W_i$ of $W$ by Lemma~\ref{LadderInCondensedWall}~(i). Furthermore, the stringers connecting $L_1$ to $R$ and the paths $P_1$ and $P_2$ form two disjoint paths continuing the stringers of $L_1$ to vertices outside of $W_i$. 
By Lemma~\ref{LadderInCondensedWall}~(ii), $L_1$ can therefore contain at most $4$~rungs. Together with $R$, this implies at most $5$~rungs for $L$.
\qed
\end{proof}

\begin{lemma} \label{tools:lem:twoLaddersWithAZPaths}
Let $L_1, L_2$ be disjoint ladders in a condensed wall $W$. Let $R_1$ be a rung at the end of $L_1$, with endvertices $v_1, v_2$. Similarly, let $R_2$ be a rung at the end of $L_2$, with endvertices $v_3, v_4$. Let $P_1$ be a $v_1$-$a$-path and let $P_2$ be a $v_4$-$z$-path (both in $W$), where $z$ is some vertex in a layer that is not occupied by $L_1$.
Additionally, let $P_3$ be a $v_2$-$v_3$-path in $W$. Furthermore, let the interior of $P_1, P_2$ and $P_3$, as well as $L_1$ and $L_2$, be pairwise disjoint. (See Figure~\ref{fig:TwoLaddersWithAZPaths})

\begin{figure}[hbt] 
\centering
	\begin{tikzpicture}[scale=1.0]
	\tikzstyle{ded}=[line width=0.8pt,double distance=1.2pt,draw=white,double=black]
	\tikzstyle{bubble}=[color=hellgrau,line width=6pt,fill=hellgrau,rounded corners=4pt]
	\def\runglen{0.5}
	\def\ladderlenone{9}
	\def\ladderlentwo{2}
	\def\offset{2.5*\runglen}
	
	\clip (-\runglen*\ladderlenone-\offset,-1)
	rectangle (\runglen*\ladderlentwo+\offset,1.5);

	\begin{scope}[shift={(-\offset,0)}]
	\draw[hedge] (0,0.5*\runglen) -- (-\ladderlenone*\runglen+\runglen,0.5*\runglen);
	\draw[hedge] (0,-0.5*\runglen) -- (-\ladderlenone*\runglen+\runglen,-0.5*\runglen);
	\foreach \i in {1,...,\ladderlenone}{
	  \node[smallvx] (C\i) at (-\i*\runglen+\runglen,0.5*\runglen){};
	  \node[smallvx] (D\i) at (-\i*\runglen+\runglen,-0.5*\runglen){};
	  \draw[hedge] (C\i) -- (D\i);
	}
	\node at (-0.5*\ladderlenone*\runglen+0.5*\runglen,0.5*\runglen+0.5) {$L_1$};
	\node at (0,\runglen) {$v_1$};
	\node at (0,-\runglen) {$v_2$};
	\node[smallvx,label=above:$a$] (a) at (\offset,1*\runglen){};
	\end{scope}
	
	\begin{scope}[shift={(\offset,0)}]
	\draw[hedge] (0,0.5*\runglen) -- (\ladderlentwo*\runglen-\runglen,0.5*\runglen);
	\draw[hedge] (0,-0.5*\runglen) -- (\ladderlentwo*\runglen-\runglen,-0.5*\runglen);
	\foreach \i in {1,...,\ladderlentwo}{
	  \node[smallvx] (A\i) at (\i*\runglen-\runglen,0.5*\runglen){};
	  \node[smallvx] (B\i) at (\i*\runglen-\runglen,-0.5*\runglen){};
	  \draw[hedge] (A\i) -- (B\i);
	}
	\node at (0.5*\ladderlentwo*\runglen-0.5*\runglen,0.5*\runglen+0.5) {$L_2$};
	\node at (0,\runglen) {$v_3$};
	\node at (0,-\runglen) {$v_4$};
	\node[smallvx,label=below:$z$] (z) at (-\offset,-1*\runglen){};

	\end{scope}
	

	\begin{scope}[on background layer]
	\draw[bubble] (A1.center) -- (A\ladderlentwo.center) -- (B\ladderlentwo.center) -- (B1.center) -- cycle; 
	\draw[bubble] (C1.center) -- (C\ladderlenone.center) -- (D\ladderlenone.center) -- (D1.center) -- cycle; 
	\end{scope}

	\draw[ded,out=0,in=180] (D1) to (A1);
	
	\draw[ded,out=180,in=0] (a) to (C1);
	\draw[ded,out=0,in=180] (z) to (B1);
	
	\end{tikzpicture}
\caption{Two ladders with paths to $a$ and a bottleneck vertex $z$}
\label{fig:TwoLaddersWithAZPaths}
\end{figure}
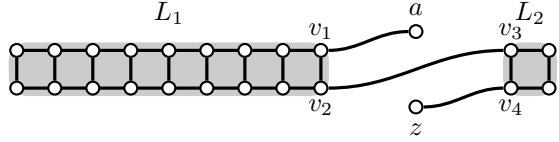

Then the sum of the number of rungs in $L_1$ and $L_2$ is at most $12$~rungs.
\end{lemma}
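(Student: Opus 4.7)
The plan is to bound $|L_1|$ and $|L_2|$ separately, each by at most $6$~rungs, by invoking whichever of Lemma~\ref{tools:lem:StringersOutAtSameEndViaAB} or Lemma~\ref{tools:lem:StringersOutAtSameEndViaAandZ} fits the configuration at the ladder's end-rung, and then summing. Recall that the first of these lemmas caps a ladder at $6$ rungs when the two endvertices of its final rung extend via internally disjoint paths to $a$ and $b$, while the second caps it at $5$ rungs when the extensions go to $a$ and to a bottleneck vertex (provided the resulting union avoids $b$).

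An easy preliminary case is when $L_1$ and $L_2$ lie in the same layer $W_i$: then both sit inside $W-\{a,b\}$, and Lemma~\ref{LadderInCondensedWall}(i) directly gives $|L_1|,|L_2|\le 5$, hence $|L_1|+|L_2|\le 10$. We may therefore assume the two ladders occupy distinct layers $W_i$ and $W_j$.

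For $L_1$, the path $P_1$ already provides the $v_1$-to-$a$ extension of the end-rung $R_1$. To produce the second extension we follow $P_3$ from $v_2$ out of $W_i$. Since $P_3$ is internally disjoint from $P_1$, it does not contain $a$ in its interior, so it must exit $W_i$ either through $b$ or through one of the bottleneck vertices $z_{i-1}, z_i$. If the exit is via $b$, the $v_2$-to-$b$ initial subpath of $P_3$ combines with $P_1$ to satisfy Lemma~\ref{tools:lem:StringersOutAtSameEndViaAB}, giving $|L_1|\le 6$. Otherwise we take the $v_2$-to-$z'$ initial subpath (where $z'$ is the first bottleneck reached by $P_3$) and, provided $b$ is absent from the resulting union, Lemma~\ref{tools:lem:StringersOutAtSameEndViaAandZ} gives $|L_1|\le 5$. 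A symmetric treatment of $L_2$, using its end-rung $R_2$, the extension $P_2$ from $v_4$ (which in the interesting subcase must leave $L_2$'s layer through a bottleneck or through $a,b$, since $z$ sits in a different layer), and the reversed $P_3$-segment from $v_3$, yields $|L_2|\le 6$, so $|L_1|+|L_2|\le 12$.

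The main obstacle is verifying the ``no $b$'' (respectively ``no $a$'') hypothesis of Lemma~\ref{tools:lem:StringersOutAtSameEndViaAandZ} before each application. This forces a case split on where $a$ and $b$ actually occur in the graph $L$: $b$ could lie inside $L_1$, on $P_1$, or on the $v_2$-to-$z'$ subpath of $P_3$, and analogously for $a$ on the $L_2$-side. In every such offending case the misplaced vertex furnishes a usable $v_1$-to-$b$ or $v_2$-to-$b$ subpath inside the part of $L$ already under consideration, so Lemma~\ref{tools:lem:StringersOutAtSameEndViaAB} applies and caps the relevant ladder at $6$ rather than $5$. These redirections cost at most one extra rung per side but never push either individual bound above $6$, and hence $|L_1|+|L_2|\le 12$ in all cases.
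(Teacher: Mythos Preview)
Your overall strategy of capping each ladder individually at $6$ rungs and summing to $12$ does not work, and the difficulty is precisely the case you try to wave away at the end: when $b$ lies inside $L_1$ (more exactly, in $L_1-R_1$). In that situation Lemma~\ref{tools:lem:StringersOutAtSameEndViaAB} is not applicable, because it requires the $v_2$--$b$ path to be internally disjoint from the ladder, and there is no way to reach $b$ from $v_2$ without passing through $L_1$. Your sentence ``the misplaced vertex furnishes a usable $v_1$-to-$b$ or $v_2$-to-$b$ subpath inside the part of $L$ already under consideration, so Lemma~\ref{tools:lem:StringersOutAtSameEndViaAB} applies'' is simply false here. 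The same problem undercuts the earlier step where you assume $L_1$ sits inside a single layer $W_i$: once $b$ is an interior branch vertex of $L_1$, the ladder can straddle two layers, and your ``$P_3$ must exit $W_i$ through $b$ or a bottleneck'' argument never gets off the ground.

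This is not a cosmetic issue. The paper's own analysis of exactly this case shows that $L_1$ can be bounded only by $9$ rungs (splitting $L_1'-b$ into two pieces of sizes $4$ and $3$, plus the rung through $b$, plus $R_1$), not $6$; the figure in the statement is even drawn with $|L_1|=9$. What saves the total is a trade-off: when $b$ sits inside $L_1'$, the ladder $L_2$ is forced down to at most $2$ rungs via Lemma~\ref{LadderInCondensedWall}(iii), because its stringers extend on both sides through bottlenecks without access to $a$ or $b$. The correct proof therefore cannot decouple $L_1$ and $L_2$; it must case-split on the position of $b$ (in neither ladder, in $R_2$, in $L_2-R_2$, or in $L_1'$) and in each case balance a larger bound on one ladder against a smaller bound on the other.
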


\begin{proof}
Let $L'_1$ be the maximum subladder of $L_1 - R_1$. Then both $L_2$  and $L'_1$ cannot contain $a$. 

Now suppose neither of them would contain $b$, either. Then by Lemma~\ref{LadderInCondensedWall}~(i), each of $L_2$ and $L'_1$ can contain at most $5$~rungs. Together with $R_1$, this means up to $11$~rungs for $L_1$ and $L_2$.

Now suppose $b$ would be contained in $L_2$. If $b$ is contained in $R_2$, we define $L'_2$ as the (unique) maximal subladder of $L_2 - R_2$. Again, we apply Lemma~\ref{LadderInCondensedWall}~(i) to see that $L'_1$ and $L'_2$ can have at most $5$ rungs each. This implies at most $6$~rungs each for $L_1$ and $L_2$, which sums up to at most $12$~rungs in total.

If $b$ is contained in $L_2 - R_2$, there is a (maximal) proper subladder $L'_2$ of $L_2 - b$ that contains $R_2$ and has four disjoint paths continuing its stringers to vertices outside of its layer: $P_2$, $P_3$ and the parts of the stringers of $L_2$ that connect $L'_2$ with the rungs of $L_2 - L'_2$. However, as $a$ is contained in neither of them (as $a$ is on $P_1$), at least three of them contain neither $a$ nor $b$. This is a contradiction.

Finally, we can suppose $b$ is contained in $L'_1$. Then $L_2$ contains at most $2$ rungs due to Lemma~\ref{LadderInCondensedWall}~(iii).

Now $L'_1 - b$ contains at most two disjoint maximal subladders. If it would contain only one such maximal subladder $L_3$, then $L_3$ can contain at most $5$ rungs by Lemma~\ref{LadderInCondensedWall}~(i) as it contains neither $a$ nor $b$. This implies up to $7$ rungs for $L_1$, which sums up to at most $7 + 2 = 9$ rungs for $L_1$ and $L_2$.

So let us suppose $L'_1 - b$ contains exactly two disjoint maximal subladders $L_3$ and $L_4$, where $L_3$ is the subladder adjacent to $R_1$. Then $L_4$ can contain at most $4$~rungs by Lemma~\ref{LadderInCondensedWall}~(ii), and $L_3$~can contain at most $3$~rungs by Lemma~\ref{LadderInCondensedWall}~(iii). Together with at most $1$ rung that can be incident with $b$ and $R_1$, this implies up to $4 + 1 + 3 + 1 = 9$ rungs for $L_1$. With $2$ rungs in $L_2$, we get a sum of rungs of at most $11$.
\qed
\end{proof}

\begin{lemma} \label{tools:lem:StringersOutAtCD}

Let $L$ be a ladder in a condensed wall $W$. Let $R$ be a rung at the end of $L$, with endvertices $v_1, v_2$. Let $P_1$ be a $v_1$-$c$-path and let $P_2$ be a $v_2$-$d$-path (both in $W$). Furthermore, let the interior of $P_1$ and $P_2$ be disjoint from each other and disjoint from $L$.

Let $Q = (P_1 - v_1) + (P_2 - v_2)$. (See Figure~\ref{fig:LadderWithCDPaths})

\begin{figure}[hbt] 
	\centering
	\begin{tikzpicture}[scale=1.1]
	\tikzstyle{ded}=[line width=0.8pt,double distance=1.2pt,draw=white,double=black]
	\tikzstyle{bubble}=[color=hellgrau,line width=6pt,fill=hellgrau,rounded corners=4pt]
	\def\runglen{0.5}
	\def\ladderlen{9}
	\def\offset{2.5*\runglen}
	
	\clip (-0.5*\runglen*\ladderlen,-1)
	rectangle (0.5*\runglen*\ladderlen+\offset+0.5,1.5);

	\begin{scope}[shift={(0.5*\ladderlen*\runglen,0)}]
	\draw[hedge] (0,0.5*\runglen) -- (-\ladderlen*\runglen+\runglen,0.5*\runglen);
	\draw[hedge] (0,-0.5*\runglen) -- (-\ladderlen*\runglen+\runglen,-0.5*\runglen);
	\foreach \i in {1,...,\ladderlen}{
	  \node[smallvx] (C\i) at (-\i*\runglen+\runglen,0.5*\runglen){};
	  \node[smallvx] (D\i) at (-\i*\runglen+\runglen,-0.5*\runglen){};
	  \draw[hedge] (C\i) -- (D\i);
	}
	\node at (-0.5*\ladderlen*\runglen+0.5*\runglen,0.5*\runglen+0.5) {$L$};
	\node[bold] at (0.5,0) {$Q$};
	\node at (0,\runglen) {$v_1$}; 
	\node[redvx,label=right:$c$] (c) at (\offset,\runglen){};
	\node at (0,-\runglen) {$v_2$}; 
	\node[redvx,label=right:$d$] (d) at (\offset,-\runglen) {};

	\end{scope}

		\begin{scope}[on background layer]
	\draw[bubble] (C1.center) -- (C\ladderlen.center) -- (D\ladderlen.center) -- (D1.center) -- cycle; 
	\end{scope}


	\draw[rededge, out=180,in=0] (c) to (C1);
	\draw[rededge, out=180,in=0] (d) to (D1);

	\end{tikzpicture}
\caption{A ladder with paths to $c$ and $d$}
\label{fig:LadderWithCDPaths}
\end{figure}
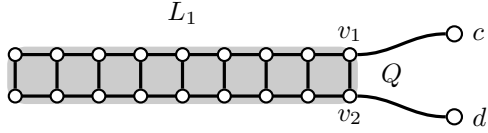

\begin{enumerate}[label=\upshape(\roman*)]
\item 
If both $L$ and $Q$ contain neither $a$ nor $b$, $L$ has at most 2~rungs.
\item 
If $L$ contains neither $a$ nor $b$, and there is exactly one of $a$ and $b$ on $Q$, $L$ has at most 4~rungs.
\item 
If $L$ contains neither $a$ nor $b$, and there are both $a$ and $b$ on $Q$, $L$ has at most 5~rungs. 
\item 
If $L$ contains exactly one of $a$ and $b$, but $Q$ contains neither of them, $L$ has at most 5~rungs.
\item 
If $L$ contains exactly one of $a$ and $b$, and there is exactly one of $a$ and $b$ on $Q$, too, $L$ has at most 8~rungs.
\item 
If $L$ contains both $a$ and $b$ (which implies $Q$ contains neither of them), $L$ has at most 9~rungs.
\end{enumerate}

\end{lemma}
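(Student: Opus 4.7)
The plan is to split into six cases according to where $a$ and $b$ occur in $L_1$ and on the auxiliary path $Q$. In each case the main technique is to remove $a$ and $b$ from $L_1$ and bound each maximal subladder of $L_1 - \{a, b\}$ using the earlier tools of this section; every such subladder lies in $W - \{a, b\}$ and inherits pendant path extensions at its ends coming from the stringers of $L_1$ together with $P_1$ and $P_2$.

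Cases (i) and (iii) fall out immediately. In (i), all of $L = L_1 \cup P_1 \cup P_2$ sits in $W - \{a, b\}$, and $P_1, P_2$ are pendant paths from the endvertices of the end rung $R$ to the bottleneck vertices $c, d$; this matches the setup of Lemma~\ref{LadderInCondensedWall}~(iii) and yields at most $2$ rungs. In (iii), $L_1$ itself lies in $W - \{a, b\}$, so Lemma~\ref{LadderInCondensedWall}~(i) gives at most $5$ rungs. Case (ii) sharpens the last bound: Lemma~\ref{LadderInCondensedWall}~(i) still gives at most $5$, and if equality held then the proof of that lemma would force $L_1$ into a single layer $W_i$ with both $z_{i-1}$ and $z_i$ occurring strictly in the interior of $L_1$. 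Since $c, d \notin L_1$ we must have $i \notin \{1, r\}$; but then $P_2$, which by the case assumption avoids $a$ and $b$ and is internally disjoint from $L_1$, would have no available vertex through which to leave $W_i$ to reach $d$, a contradiction.

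For cases (iv), (v), (vi), $L_1 - \{a, b\}$ decomposes into at most three maximal subladders. An interior subladder picks up two pendant path extensions along its stringers running towards the removed vertex $a$ or $b$, and is therefore bounded by Lemma~\ref{LadderInCondensedWall}~(iii). An end subladder picks up one such stringer extension together with the path $P_1$ or $P_2$ through the end rung $R$; it is bounded by Lemma~\ref{LadderInCondensedWall}~(ii), or, when the outgoing paths reach one of $a, b$ and a bottleneck vertex ($c$, $d$, or an intermediate $z_j$), by the sharper Lemma~\ref{tools:lem:StringersOutAtSameEndViaAB} or Lemma~\ref{tools:lem:StringersOutAtSameEndViaAandZ}. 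Summing over all subladders, together with the at most two rungs that can themselves contain $a$ or $b$, is intended to yield the stated bounds $5$, $8$, $9$.

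The hardest part will be case (v): since $L_1$ contains exactly one of $a, b$ while $Q$ contains the other, the geometry of the end subladder depends on whether the vertex of $\{a, b\}$ lying in $L_1$ coincides with $v_1$ or $v_2$, lies on $R$ as a subdivision vertex, or sits strictly in the interior of $L_1$, and each of these subcases calls for a different combination of the pendant-path lemmas. Careful bookkeeping of which subladder inherits the more restrictive path extension (through $c$ or $d$, versus through $a$ or $b$) will be needed to show that eight rungs is the tight total; the bounds for the easier cases (iv) and (vi) should then drop out as simpler instances of the same summation.
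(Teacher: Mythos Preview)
Your decomposition idea is the right shape for (v) --- and indeed the paper's argument for (v) is essentially what you sketch --- but for (iv) and (vi) it misses a crucial structural observation, and your description of the ``end subladders'' is incomplete. In those cases the vertex (or vertices) of $\{a,b\}$ lying in $L_1$ need not lie on the end rung $R$. If, say, $a$ sits strictly in the interior of $L_1$ in case~(iv), then $L_1-a$ has two maximal subladders, and the one at the \emph{far} end of $L_1$ (away from $R$) has no access to $P_1$ or $P_2$ at all: its only pendant extension is a stringer piece running toward $a$. But $a$ is not a bottleneck vertex, and because $a$ is adjacent to every layer of $W$, that stringer piece can reach $a$ without ever passing through any~$z_j$. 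So Lemma~\ref{LadderInCondensedWall}~(ii) does not apply to this subladder, and you are stuck with the weak bound of $5$ rungs from part~(i); the total then comes out well above~$5$. The same problem afflicts your claim that interior subladders are bounded by Lemma~\ref{LadderInCondensedWall}~(iii): the pendant paths there run toward $a$ or $b$, not toward bottleneck vertices.

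The paper closes this gap with a separate observation it states as three intermediate claims. The concatenation $P=P_1\cup R\cup P_2$ is a $c$--$d$ path, and if it avoids both $a$ and $b$ then it must pass through \emph{every} bottleneck vertex $z_i$ (since $\{a,b,z_i\}$ separates $c$ from $d$). All bottleneck vertices of $W$ then lie on $P$, so within $L_1$ they lie on $R$; every small cycle of $L_1$ not using $R$ therefore contains no bottleneck vertex and hence must contain both $a$ and $b$. This forces $L_1$ to have at most $2$ rungs when $L_1$ contains at most one of $a,b$, and at most $4$ rungs when it contains both. With this in hand, the troublesome sub-case (the special vertex off $R$) collapses instantly in (iv) and (vi), leaving only the easy sub-case where the vertex lies on $R$, which \emph{is} handled by your decomposition. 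Your proposal needs this ingredient.
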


\begin{proof}
(i) This follows from Lemma~\ref{LadderInCondensedWall}~(iii).

(ii) As $L$ contains neither $a$ nor $b$, if $L$ has more than one rung, it must be entirely contained in a single layer of $W$.
Additionally, as $Q$ is allowed to contain only one of $a$ and $b$, either $P_1$ or $P_2$ must not contain $a$ and $b$. This implies $L$ has a stringer continued to $c$ or $d$ without using $a$ and $b$.
By Lemma~\ref{LadderInCondensedWall}~(ii), $L$ can thus have at most 4~rungs.

(iii) As in (ii), $L$ must be contained in a single layer of $W$. By Lemma~\ref{LadderInCondensedWall}~(i), $L$ can therefore have at most 5~rungs. 

Before we continue with the proof of~(iv), we make some observations that will prove useful for the next three cases.
Let $P$ be the path in $L \cup Q$ formed by $P_1, P_2$ and~$R$.

\vspace{0.2cm}
\noindent \textbf{Claim~1:} \emph{If $P$ uses neither $a$ nor $b$, $P$ must use all the vertices $z_i, i \in \{0\} \cup [r]$, where $z_0 = c$ and $z_r = d$.}
\vspace{0.2cm}

By definition, $P$ uses $z_0 = c$ and $z_r = d$. Furthermore, $a, b$ and $z_i$ disconnect $c$ from $d$ for every $i \in \{1, \ldots, n-1\}$.

\vspace{0.2cm}
\noindent \textbf{Claim~2:} \emph{If $P$ contains neither $a$ nor $b$, and $L$ contains at most one of them, then $L$ has at most $2$~rungs.}
\vspace{0.2cm}

Without using a bottleneck vertex, all cycles in $W$ must contain both $a$ and~$b$.
As $L \cup Q$ contains at most one of $a$ and~$b$, if $L$ has more than one rung, all rungs of $L$ must be on a cycle with a bottleneck vertex. As $R$ is the only rung of $L$ that contains bottleneck vertices, all cycles of $L$ must contain $R$. As $R$ is required to be on the end of $L$, $L$ cannot have more than 2~rungs, no matter whether we use $a$ or $b$ or not.

\vspace{0.2cm}
\noindent \textbf{Claim~3:} \emph{If $P$ contains neither $a$ nor $b$, but $L$ contains both, then $L$ has at most $4$~rungs.}
\vspace{0.2cm}

Now there can be additional cycles that use no bottleneck vertex, but those must then contain both $a$ and $b$.
When considering the smallest cycles on $L$ (those that use two adjacent rungs), there can be only two cycles that contain both $a$ and $b$. Together with the single cycle that may be incident with a bottleneck vertex that we also got in the upper case, we are left with at most 3 smallest cycles in $L$. This mean we get an upper bound of 4~rungs for $L$.

Now, we can continue with the last three proofs:

(iv) If $P$ uses neither $a$ nor $b$ as in (i), $L$ has at most 2~rungs as seen in Claim~2. Therefore, we must only consider the case where $P$ uses $v \in \{a, b\}$.
As we know that $v$ is not a part of $Q$, we can conclude that $v$ lies on $r$.
Then we can apply Lemma~\ref{tools:lem:StringersOutAtSameEndViaAandZ} to see that $L$ contains at most $5$~rungs.

(v) Again, there is $v \in \{a, b\}$ contained in $L$.
If $v$ were contained in $R$, $L - R$ would contain neither $a$ nor $b$. Therefore, $L - R$ could only contain a subladder with at most 5~rungs due to Lemma~\ref{LadderInCondensedWall}~(i). This would imply at most $5 + 1 = 6$ rungs for $L$.

Now suppose $v$ is not in $R$. Then it is not a part of $P_1$ and $P_2$, either. As in (ii), since $Q$ is allowed to contain only one of $a$ and $b$, either $P_1$ or $P_2$ must not contain $a$ and $b$.
Let $L_1, L_2$ be the two maximal subladders of $L - v$, where $L_2$ is the one that is incident with $P_1$ and $P_2$. The stringers of $L$ form two disjoint paths connecting $L_1$ and $L_2$, where only one of them can contain $v$.
If $L_1$ or $L_2$ contains more that one rung, it is entirely contained in a single layer of $W$. We can therefore apply Lemma~\ref{LadderInCondensedWall}~(ii) and~(iii) to see that $L_1$ can only contain up to $4$~rungs, while there are at most $3$~rungs in $L_2$. Together with at most one rung that is incident with $v$, we get a total of $4 + 1 + 3 =8$~rungs as un upper blound for $L$.

(vi) If $P$ uses neither $a$ nor $b$, $L$ has at most 4~rungs as seen at the beginning of the proof. Therefore, we can conclude $P$ uses at least one of $a$ and $b$.
Thus, $L - R$ contains exactly one maximal subladder $L_1$ with only rung less that $L$, but only one of $a$ or $b$ can be contained in $L_1$.
As in the proof of~(v), we can argue that $L_1$ can only contain at most $8$~rungs as it has two stringers continued via disjoint paths to $c$ and $d$, but only one of the paths may contain $a$ or $b$.
This implies there are at most $8 + 1 = 9$~rungs in $L$.
\qed
\end{proof}

%
Lemma~\ref{tools:lem:StringersOutAtCD} can also be used when there are two ladders, as in the following lemma:

\begin{lemma} \label{tools:lem:AtMost13RungsWithCD}

Let $L_1, L_2$ be two disjoint ladders in a condensed wall $W$. Let $R_1$ be a rung at the end of $L_1$ with endvertices $v_1, v_2$. Similarly, let $R_2$ be a rung at the end of $L_2$ with endvertices $v_3, v_4$.

Now, let $P_1$ be a $v_1$-$c$-path in $W$ and let $P_2$ be a $v_4$-$d$-path in $W$. Additionally, let $P_3$ be a $v_2$-$v_3$-path in $W$. Finally, let the interior of $P_1, P_2$ and $P_3$, as well as $L_1$ and $L_2$, be pairwise disjoint. (See Figure~\ref{fig:TwoLaddersWithCDPaths})

\begin{figure}[hbt] 
\centering
	\begin{tikzpicture}[scale=1.0]
	\tikzstyle{ded}=[line width=0.8pt,double distance=1.2pt,draw=white,double=black]
	\tikzstyle{bubble}=[color=hellgrau,line width=6pt,fill=hellgrau,rounded corners=4pt]
	\def\runglen{0.5}
	\def\ladderlenone{8}
	\def\ladderlentwo{5}
	\def\offset{2.5*\runglen}
	
	\clip (-\runglen*\ladderlenone-1.5*\offset,-1)
	rectangle (\runglen*\ladderlentwo+1.5*\offset,1.5);

	\begin{scope}[shift={(-1.5*\offset,0)}]
	\draw[hedge] (0,0.5*\runglen) -- (-\ladderlenone*\runglen+\runglen,0.5*\runglen);
	\draw[hedge] (0,-0.5*\runglen) -- (-\ladderlenone*\runglen+\runglen,-0.5*\runglen);
	\foreach \i in {1,...,\ladderlenone}{
	  \node[smallvx] (C\i) at (-\i*\runglen+\runglen,0.5*\runglen){};
	  \node[smallvx] (D\i) at (-\i*\runglen+\runglen,-0.5*\runglen){};
	  \draw[hedge] (C\i) -- (D\i);
	}
	\node at (-0.5*\ladderlenone*\runglen+0.5*\runglen,0.5*\runglen+0.5) {$L_1$};
	\node at (0,\runglen) {$v_1$};
	\node at (0,-\runglen) {$v_2$};
	\node[smallvx,label=above:$c$] (c) at (\offset,1*\runglen){};
	\end{scope}
	
	\begin{scope}[shift={(1.5*\offset,0)}]
	\draw[hedge] (0,0.5*\runglen) -- (\ladderlentwo*\runglen-\runglen,0.5*\runglen);
	\draw[hedge] (0,-0.5*\runglen) -- (\ladderlentwo*\runglen-\runglen,-0.5*\runglen);
	\foreach \i in {1,...,\ladderlentwo}{
	  \node[smallvx] (A\i) at (\i*\runglen-\runglen,0.5*\runglen){};
	  \node[smallvx] (B\i) at (\i*\runglen-\runglen,-0.5*\runglen){};
	  \draw[hedge] (A\i) -- (B\i);
	}
	\node at (0.5*\ladderlentwo*\runglen-0.5*\runglen,0.5*\runglen+0.5) {$L_2$};
	\node at (0,\runglen) {$v_4$};
	\node at (0,-\runglen) {$v_3$};
	\node[smallvx,label=above:$d$] (d) at (-\offset,\runglen){};

	\end{scope}
	

	\begin{scope}[on background layer]
	\draw[bubble] (A1.center) -- (A\ladderlentwo.center) -- (B\ladderlentwo.center) -- (B1.center) -- cycle; 
	\draw[bubble] (C1.center) -- (C\ladderlenone.center) -- (D\ladderlenone.center) -- (D1.center) -- cycle; 
	\end{scope}

	\draw[ded,out=0,in=180] (D1) to (B1);
	
	\draw[ded,out=180,in=0] (c) to (C1);
	\draw[ded,out=0,in=180] (d) to (A1);
	
	\end{tikzpicture}
\caption{Two ladders with paths to $c$ and $d$}
\label{fig:TwoLaddersWithCDPaths}
\end{figure}
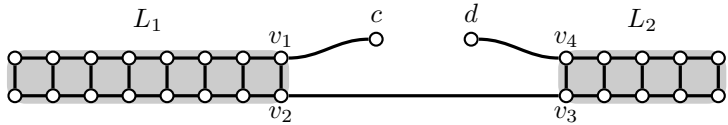

Then the sum of the number of rungs in $L_1$ and $L_2$ is at~most~13.

\end{lemma}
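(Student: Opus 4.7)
The plan is to reduce the two-ladder configuration to two separate applications of Lemma~\ref{tools:lem:StringersOutAtCD}, one to each of $L_1$ and $L_2$. For $L_1$ I use the external paths $P_1$ (to $c$) and $P_2' := P_3 \cup R_2 \cup P_2$, a $v_2$-$d$-path routed through the end rung $R_2$ of $L_2$. Symmetrically, for $L_2$ I use $P_2$ and $P_1' := P_1 \cup R_1 \cup P_3$. The internal-disjointness hypotheses of Lemma~\ref{tools:lem:StringersOutAtCD} follow directly from the given internal disjointness of $L_1, L_2, P_1, P_2, P_3$, and $P_1'$, $P_2'$ are genuine paths for the same reason.

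The heart of the argument is then a case analysis on where $a$ and $b$ lie relative to the disjoint pieces $L_1$, $L_2$, $P_1 - v_1$, $P_2 - v_4$, and the interior of $P_3$. The crucial bookkeeping observation is that the path $P_j'$ meets the opposite ladder $L_i$ precisely in the rung $R_i$; so a vertex of $L_i$ contributes to the $Q$-set of Lemma~\ref{tools:lem:StringersOutAtCD} applied to the other ladder iff it lies on $R_i$. Translating through the six cases of Lemma~\ref{tools:lem:StringersOutAtCD}, each placement of $a, b$ yields complementary bounds on $|L_1|$ and $|L_2|$ that sum to at most~$13$.

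An extremal configuration achieving the bound occurs, up to symmetry, when exactly one of $\{a, b\}$ lies in $L_1 \setminus R_1$ while the other lies on $R_2$: the external $Q$-set for $L_1$ picks up a vertex of $\{a, b\}$ via $R_2 \subseteq P_2'$, so Lemma~\ref{tools:lem:StringersOutAtCD}(v) gives $|L_1| \leq 8$; the external $Q$-set for $L_2$ avoids both $a$ and $b$, so Lemma~\ref{tools:lem:StringersOutAtCD}(iv) gives $|L_2| \leq 5$, summing to exactly~$13$.

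The subtle pitfall is the case where both $a$ and $b$ lie inside a single ladder, say $L_1$. A blind appeal to case~(vi) gives only $|L_1| \leq 9$, and should $a$ and $b$ both happen to sit on the shared rung $R_1$, the $Q$-set for $L_2$ contains both, so case~(iii) would allow $|L_2| \leq 5$ for a spurious total of~$14$. The remedy is to observe that if $a, b \in R_1$ then $L_1 \setminus R_1$ avoids $\{a, b\}$, so Lemma~\ref{LadderInCondensedWall}(i) forces $|L_1 \setminus R_1| \leq 5$ and hence $|L_1| \leq 6$, and the total drops to at most~$11$. All remaining subcases are straightforward and yield sums strictly below~$13$.
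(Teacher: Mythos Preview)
Your approach is essentially the same as the paper's: build the composite paths $P_2' = P_3 \cup R_2 \cup P_2$ and $P_1' = P_1 \cup R_1 \cup P_3$, apply Lemma~\ref{tools:lem:StringersOutAtCD} to each ladder separately, and case on the location of $a,b$. The one organisational difference is that the paper first observes that if the total exceeds~$13$ then some $L_i$ has at least~$7$ rungs, whence $L_i - R_i$ has at least~$6$ and so already contains one of $a,b$ by Lemma~\ref{LadderInCondensedWall}(i). This pins one of $a,b$ inside $L_i \setminus R_i$ from the outset and halves the case analysis---in particular it sidesteps your ``pitfall'' subcase ($a,b$ both on $R_1$) entirely, since that configuration is ruled out before the casework begins.

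One small inaccuracy: your final sentence says the remaining subcases all give sums \emph{strictly} below~$13$, but the subcase ``both $a,b$ in $L_1$, exactly one of them on $R_1$'' yields $9+4=13$ via cases~(vi) and~(ii), just as in the paper. This does not affect the validity of the bound.
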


\begin{proof}

When the sum of rungs in $L_1$ and $L_2$ is more than~13, we can conclude that at least one of them (say $L_1$, as the situation is symmetric) must contain at least 7~rungs.
Then $L_1 - R_1$ must contain a subladder $L'_1$ with at least 6~rungs. Due to Lemma~\ref{LadderInCondensedWall}~(i), this is only possible if $L'_1$ contains $a$ or $b$. Say $L'_1$ contains $a$, as the situation is again symmetric.
Now $P_3, R_2$ and $P_2$ form together a path $P$ that continues a ladder stringer of $L_1$ to $d$. Additionally, $L_1$ has its other stringer continued at the same end via $P_1$ to $c$.
Let $Q_1 = (P_1 - v_1) \cup (P - v_2)$.

Now, we observe that $L_2$ is in a similar situation. $P_3, R_1$ and $P_1$ form a path $P'$ that continues a stringer of $L_2$ to $c$. Additionally, $L_2$ has its other stringer continued at the same end via $P_2$ to $d$.
Let $Q_2 = (P_2 - v_4) \cup (P' - v_3)$.
As $a$ is part of $L_1 - R_1$, we note that $Q_2$, $L_2$ and $Q_1$ cannot contain $a$.
Next, we take a look at where $b$ might be situated. We divide three cases:

First, $b$ might be a part of $L_1$. Then $L_1$ contains both $a$ and $b$. By Lemma~\ref{tools:lem:StringersOutAtCD}~(vi), $L_1$ can contain at most 9~rungs.
In this case, $b$ cannot be part of $L_2$ anymore. However, it could be a part of $R_1 \subset Q_2$. In this case, $L_2$ can contain up to 4~rungs by Lemma~\ref{tools:lem:StringersOutAtCD}~(ii). This sums up to $9 + 4 = 13$ rungs in total, which was what we wanted.
Alternatively, $b$ could not be part of $R_1$. Then $b$ is not a part of $Q_2$, which implies that $L_2$ can only contain two rungs by Lemma~\ref{tools:lem:StringersOutAtCD}~(i). This implies up to $9 + 2 = 11$ rungs in total.

Second, $b$ might be a part of $L_2$. Then $Q_2$ contains neither $a$ nor $b$. Using Lemma~\ref{tools:lem:StringersOutAtCD}~(iv), $L_2$ can have at most 5~rungs.
Similar to the first case, $b$ might lie on $R_2 \subset Q_1$, which implies that $L_1$ can contain at most 8~rungs due to Lemma~\ref{tools:lem:StringersOutAtCD}~(v). This means we get at most $8 + 5 = 13$ rungs in total.
Alternatively, $b$ might not lie on $R_2$, implying it is not on $Q_1$, either. Then we use Lemma~\ref{tools:lem:StringersOutAtCD}~(iv) to see that $L_1$ could only contain 5~rungs, which implies up to $ 5 + 5 = 10$ rungs in total.

Third and finally, $b$ might be a part of neither $L_1$ nor $L_2$. If it is on $P_1, P_2$ or $P_3$, it is part of both $Q_1$ and $Q_2$.
We can therefore conclude that $L_1$ contains at most 8~rungs by Lemma~\ref{tools:lem:StringersOutAtCD}~(v), while $L_2$ contains at most 4~rungs by Lemma~\ref{tools:lem:StringersOutAtCD}~(ii). This sums up to $8 + 4 = 12$ rungs in total.
Alternatively, $b$ could not be part of $P_1, P_2$ or $P_3$. Then, it is neither in $L_1, L_2, Q_1$ nor in $Q_2$. Therefore, we use Lemma~\ref{tools:lem:StringersOutAtCD}~(iv) to see that $L_1$ can contain only 5~rungs. Similarly, Lemma~\ref{tools:lem:StringersOutAtCD}~(i) implies that $L_2$ can only contain 2~rungs. In total, this means at most $5 + 2 = 7$ rungs in total.

As we have seen that an upper bound of 13~rungs in total hold true for all three cases, we have proven the lemma.
\qed
\end{proof}

\begin{lemma} \label{tools:lem:threeLaddersWithABCDPaths}
Let $L$ be a ladder where the pairs $\{a, b\}$ and $\{c, d\}$ are situated on the stringers of $L$ between different rungs, but both vertices of each pair lie between the same two rungs and on the same stringer of $L$.

Except for the part of the stringers that is between the two vertices of each pair, all of $L$ is situated in a condensed wall $W$. An example for the part of $L$ in $W$ is depicted in Figure~\ref{fig:ThreeLaddersWithABCDPaths}. 

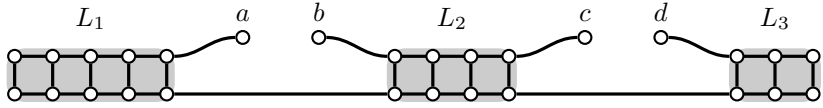
\begin{figure}[hbt] 
\centering
	\begin{tikzpicture}[scale=1.0]
	\tikzstyle{ded}=[line width=0.8pt,double distance=1.2pt,draw=white,double=black]
	\tikzstyle{bubble}=[color=hellgrau,line width=6pt,fill=hellgrau,rounded corners=4pt]
	\def\runglen{0.5}
	\def\ladderlenone{5}
	\def\ladderlentwo{4}
	\def\ladderlenthree{3}
	\def\offset{2*\runglen}
	
	\clip (-\runglen*\ladderlenone-1.5*\offset,-1)
	rectangle (\runglen*\ladderlentwo+\runglen*\ladderlenthree+4.5*\offset,1.5);

	\begin{scope}[shift={(-1.5*\offset,0)}]
	\draw[hedge] (0,0.5*\runglen) -- (-\ladderlenone*\runglen+\runglen,0.5*\runglen);
	\draw[hedge] (0,-0.5*\runglen) -- (-\ladderlenone*\runglen+\runglen,-0.5*\runglen);
	\foreach \i in {1,...,\ladderlenone}{
	  \node[smallvx] (C\i) at (-\i*\runglen+\runglen,0.5*\runglen){};
	  \node[smallvx] (D\i) at (-\i*\runglen+\runglen,-0.5*\runglen){};
	  \draw[hedge] (C\i) -- (D\i);
	}
	\node at (-0.5*\ladderlenone*\runglen+0.5*\runglen,0.5*\runglen+0.5) {$L_1$};
	\node[smallvx,label=above:$a$] (a) at (\offset,1*\runglen){};
	\end{scope}
	
	\begin{scope}[shift={(1.5*\offset,0)}]
	\draw[hedge] (0,0.5*\runglen) -- (\ladderlentwo*\runglen-\runglen,0.5*\runglen);
	\draw[hedge] (0,-0.5*\runglen) -- (\ladderlentwo*\runglen-\runglen,-0.5*\runglen);
	\foreach \i in {1,...,\ladderlentwo}{
	  \node[smallvx] (A\i) at (\i*\runglen-\runglen,0.5*\runglen){};
	  \node[smallvx] (B\i) at (\i*\runglen-\runglen,-0.5*\runglen){};
	  \draw[hedge] (A\i) -- (B\i);
	}
	\node at (0.5*\ladderlentwo*\runglen-0.5*\runglen,0.5*\runglen+0.5) {$L_2$};
	\node[smallvx,label=above:$b$] (b) at (-\offset,\runglen){};
	\node[smallvx,label=above:$c$] (c) at (\runglen*\ladderlentwo - \runglen + \offset,\runglen){};

	\end{scope}
	
	\begin{scope}[shift={(\ladderlentwo*\runglen - \runglen + 4.5*\offset,0)}]
	\draw[hedge] (0,0.5*\runglen) -- (\ladderlenthree*\runglen-\runglen,0.5*\runglen);
	\draw[hedge] (0,-0.5*\runglen) -- (\ladderlenthree*\runglen-\runglen,-0.5*\runglen);
	\foreach \i in {1,...,\ladderlenthree}{
	  \node[smallvx] (E\i) at (\i*\runglen-\runglen,0.5*\runglen){};
	  \node[smallvx] (F\i) at (\i*\runglen-\runglen,-0.5*\runglen){};
	  \draw[hedge] (E\i) -- (F\i);
	}
	\node at (0.5*\ladderlenthree*\runglen-0.5*\runglen,0.5*\runglen+0.5) {$L_3$};
	\node[smallvx,label=above:$d$] (d) at (-\offset,\runglen){};
	
	\end{scope}

	\begin{scope}[on background layer]
	\draw[bubble] (A1.center) -- (A\ladderlentwo.center) -- (B\ladderlentwo.center) -- (B1.center) -- cycle; 
	\draw[bubble] (C1.center) -- (C\ladderlenone.center) -- (D\ladderlenone.center) -- (D1.center) -- cycle; 
	\draw[bubble] (E1.center) -- (E\ladderlenthree.center) -- (F\ladderlenthree.center) -- (F1.center) -- cycle; 
	\end{scope}

	\draw[ded,out=0,in=180] (D1) to (B1);
	\draw[ded,out=0,in=180] (B\ladderlentwo) to (F1);
	
	\draw[ded,out=180,in=0] (a) to (C1);
	\draw[ded,out=0,in=180] (b) to (A1);
	\draw[ded,out=0,in=180] (A\ladderlentwo) to (c);
	\draw[ded,out=0,in=180] (d) to (E1);
	
	\end{tikzpicture}
\caption{A ladder with $\{a, b\}$ and $\{c, d\}$ on its stringers}
\label{fig:ThreeLaddersWithABCDPaths}
\end{figure}

Then $L$ has at most $11$~rungs.
\end{lemma}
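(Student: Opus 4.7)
The plan is to split $L$ at its two outside-$W$ segments, yielding three subladders $L_1, L_2, L_3 \subseteq W - \{a, b\}$: $L_1$ ends at the rung just before the $\{a,b\}$-segment, $L_2$ lies strictly between the two outside-$W$ segments, and $L_3$ begins after the $\{c,d\}$-segment. Let $k_i$ denote the number of rungs of $L_i$. The remainder of $L$ inside $W$ consists of six pairwise internally-disjoint $W$-paths: $P_a, P_b, P_c, P_d$ joining the appropriate end rungs to $a, b, c, d$, together with two intact-bottom-stringer connectors $P_3$ (from $L_1$ to $L_2$) and $P_4$ (from $L_2$ to $L_3$). Crucially, $a, b, c, d$ are branch vertices of $L$ placed on its outside-$W$ segments, so none of them lies in $L_1 \cup L_2 \cup L_3$ or on any of these six $W$-paths.

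The core of the plan is to bound each $k_i$ separately, by exhibiting tail-paths that reach bottleneck vertices and invoking Lemma~\ref{LadderInCondensedWall}. For $L_1$: since $a$ and $b$ are unavailable as transit vertices, $P_3$ must leave the layer of $L_1$ through a bottleneck vertex, so its initial segment is a tail-path from $L_1$'s end rung to a bottleneck. Lemma~\ref{LadderInCondensedWall}(ii) rules out $k_1 = 5$, so $k_1 \le 4$. For $L_3$: both exits at its end rung reach bottlenecks, because $P_d$ terminates at $d = z_r$ (which is itself a bottleneck) and the initial segment of $P_4$ exits to another bottleneck. Lemma~\ref{LadderInCondensedWall}(iii) then forbids $k_3 = 3$, giving $k_3 \le 2$. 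The right end of $L_2$ is in the symmetric situation, with $P_c$ ending at the bottleneck $c = z_0$ and the $L_2$-side initial segment of $P_4$ ending at a bottleneck, so again $k_2 \le 2$. Summing yields $k_1 + k_2 + k_3 \le 4 + 2 + 2 = 8 \le 11$, comfortably establishing the claimed bound.

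The main obstacle I expect is verifying the hypothesis of Lemma~\ref{LadderInCondensedWall}(iii) carefully at the ends of $L_2$ and $L_3$: the two truncated tail-paths there must be internally disjoint and must terminate at \emph{distinct} bottleneck vertices. Internal disjointness is inherited from the subdivision-disjointness of the original subpaths of $L$; and if the two truncated paths ended at the same bottleneck, that vertex would have to be an internal vertex of two otherwise internally-disjoint paths of $L$, contradicting the structure of a subdivision (as it is not a branch vertex of $L$). The degenerate small cases $k_i \le 1$, together with the possibility that a subladder is not confined to a single layer (which can only happen for $k_i \le 2$), need to be checked separately, but each falls trivially within the bound.
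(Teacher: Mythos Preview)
Your proof is correct and in fact more direct than the paper's. The paper first bounds $L_1,L_2\le 5$ via Lemma~\ref{tools:lem:StringersOutAtSameEndViaAandZ} and $L_3\le 3$ via Lemma~\ref{LadderInCondensedWall}(iii), which only yields a total of $13$; it then eliminates the residual case $k_1,k_2\ge 4$, $k_3\ge 2$ by a separate contradiction argument (four pairwise disjoint exits from the layer of $L_2'=L_2-R_2$, at most one through $\{a,b\}$, but only two bottlenecks available). You instead exploit directly that $a$ and $b$ sit on the outside-$W$ segment and hence lie on none of $L_1,L_2,L_3$ nor on $P_3,P_4$. This lets you apply Lemma~\ref{LadderInCondensedWall} to each $L_i$ itself rather than to $L_i$ minus an end rung: extending $P_3$ along the bottom stringer through $L_2$, $P_4$, the first rung of $L_3$ and $P_d$ gives a tail in $W-\{a,b\}$ from the end rung of $L_1$ to the bottleneck $d$, so $k_1\le 4$ by~(ii); and $P_c$ together with the extension of $P_4$ through $L_3$ to $d$ give two tails in $W-\{a,b\}$ from the right end of $L_2$ to the distinct bottlenecks $c,d$, so $k_2\le 2$ by~(iii), and symmetrically $k_3\le 2$. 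Your bound of $8$ is sharper than required.

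Two small points to tighten. First, $a,b,c,d$ are not branch vertices of $L$ but degree-$2$ subdivision vertices; what your argument actually uses (and what is true) is that each occurs exactly once in $L$, on its designated outside-$W$ segment, and therefore nowhere else. Second, the claim ``$P_3$ must leave the layer of $L_1$ through a bottleneck'' is not self-evident (nothing rules out $L_2$ sitting in the same layer), but it is also unnecessary: simply take the full extensions described above, which end at the genuine bottlenecks $c=z_0$ and $d=z_r$ and lie entirely in $W-\{a,b\}$, so Lemma~\ref{LadderInCondensedWall} applies without any layer-exit reasoning. This also sidesteps your worry about truncated paths meeting at the same bottleneck, since $c\neq d$.
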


\begin{proof}
Let the maximal subladders of $L$ be named as in Figure~\ref{fig:ThreeLaddersWithABCDPaths}.
We begin by observing that $L_1$ and $L_2$ can only contain at most $5$~rungs each by Lemma~\ref{tools:lem:StringersOutAtSameEndViaAandZ}.
Additionally, we get an upper bound of $3$ rungs for $L_3$ with Lemma~\ref{LadderInCondensedWall}~(iii).
Therefore, if one of $L_1$ or $L_2$ had less than $3$ rungs, or if $L_3$ had only one single rung, we get an upper bound of $11$~rungs in total, which was what we wanted.

We can therefore assume that each of $L_1$ and $L_2$ contains at least $4$ rungs, while $L_3$ must contain at least two rungs. This implies that $L_3$ is situated in a single layer of $W$ as it contains neither $a$ not $b$.
Let $R_2$ be the rung of $L_2$ which is closest to $b$.
We conclude $L_2 - R_2$ has a proper maximal subladder $L'_2$ with at least three rungs that contains neither $a$ nor $b$. It must therefore be also entirely contained in a single layer of $W$.
The same holds true for a maximal subladder $L'_1$ of $L_1 - R_1$, where $R_1$ is the rung of $L_1$ closest to $a$.
But then $L'_2$ has four disjoint paths to vertices outside of its own layer, with only one of them containing $a$ or $b$. This is a contradiction.
\qed
\end{proof}

\subsection{Proof of Theorem \ref{no14rungs}} \label{subsec:optimality}


\theoremFourteenRungs*

\begin{proof}
Consider a ladder with $l geq 14$ rungs.
As a counterexample, we use the graph~$G^*$ as introduced in the beginning of Section~\ref{sec:toolbox}.
In Lemma~\ref{tools:lem:oneDirectionOfMainProof}, we have seen that there exists no upper bound on the size of the hitting set.

It remains to show that there are no two edge-disjoint ladders in $G^*$ with at least $l$ rungs each. Suppose a ladder would contain an X-wing in $W$. Then this X-wing would contain an ($a$-$b$, $c$-$d$)-linkage in $W$. By Lemma~\ref{def:lemma:noTwoLinkages}, there are no two edge-disjoint ($a$-$b$, $c$-$d$)-linkages in $W$. Thus there can be no two edge-disjoint ladders in $G^*$ that contain an $X$ wing in $W$.

Thus it only remains to show that every ladder in $G^*$ with at least $l$ rungs must contain an X-wing in $W$.
So let $U$ be a ladder in $G^*$ with $l$~rungs.
First of all, we observe that $U$ cannot be entirely contained in $W$ due to Lemma~\ref{maxSizeInWall}. Even more clearly, it cannot be entirely contained in $G^* - W$ as there is only room for $l_A$ or $l_C$ rungs there.

We call $U_{in}$ the maximal subgraph of $U$ in $W$, while we call the one in $G^* - (W - \{a, b, c, d\})$ $U_{out}$. $U_{in}$ and $U_{out}$ are edge-disjoint, $U_{in} \cup U_{out} = U$ and both are non-empty as seen above.

\vspace{0.2cm}
\noindent \textbf{Claim 1:} \emph{$U \cap C \neq \varnothing$}
\vspace{0.2cm}

Suppose $U$ would be disjoint from $C$. Then $U$ is entirely contained in $G^* - C$.
We have seen before that in this case, it cannot be disjoint from $A$ or $W$. As $U$ is two-connected, it must therefore use both $a$ and $b$ to connect $U_{in}$ and $U_{out}$.
Now we distinguish three cases:

First, $a$ and $b$ could be on the same ladder stringer. Then either $U_{in}$ or $U_{out}$ must contain two subladders which together contain all $l$ rungs and are connected at one stringer via a path. Clearly, they cannot be entirely contained in $A$. If they were in $W$ instead, we can use Lemma~\ref{tools:lem:StringersOutAtSameEndViaAandZ} to conclude that each of them can only contain $5$ rungs, which sums up to $10 < l$~rungs in total, a contradiction.

Second, one of $U_{in}$ and $U_{out}$ might contain only (a part of) a single rung, while the other contains a ladder with $l - 1$ rungs. So many rungs cannot be contained in $A$.
If they were contained in $W$ instead, we observe that this ladder can be split into (at most) two (inclusion-)maximal subladders where both have paths connecting their stringers to $a$ and $b$. By Lemma~\ref{tools:lem:StringersOutAtSameEndViaAB}, each of those subladders can only contain up to 6~rungs. Therefore, $U_{in}$ can only contain up to $12 \leq l - 2 < l - 1$ rungs, a contradiction.

Finally, $a$ and $b$ could split $U$ into two subladders, where one is contained in $U_{in}$ and the other in $U_{out}$. Together, they must contain all $l$ rungs.
In $U_{out}$, there can be only $l_A$ rungs. In $U_{in}$, we use again Lemma~\ref{tools:lem:StringersOutAtSameEndViaAB} to see there can be only 6~rungs.
Together, this yields an upper bound of $l_A + 6 \leq l - 1 < l$ rungs for $U$, a contradiction.

As we arrived at a contradiction in all cases, we know that $U$ cannot be disjoint from $C$.
Next, we want to see that the same holds true for $A$.

\vspace{0.2cm}
\noindent \textbf{Claim 2:} \emph{$U \cap A \neq \varnothing$}
\vspace{0.2cm}

Suppose $U$ would be disjoint from $A$. Then $U$ is entirely contained in $G^* - A$.
Again, we can conclude that $U$ must use both $c$ and $d$ to connect $U_{in}$ in $W$ and $U_{out}$ in $C$.
As before, we divide the same three cases:

First, $c$ and $d$ could be on the same ladder stringer. Then $U_{in}$ or $U_{out}$ must contain two subladders $L_1, L_2$ which together contain all $l$ rungs and are connected at one stringer via a path. Again, it cannot be entirely contained in $C$. We conclude it is in $W$ instead.
We apply Lemma~\ref{tools:lem:AtMost13RungsWithCD} to see that there can be only $13$~rungs in $W$ in this case, a contradiction.

Second, one of $U_{in}$ and $U_{out}$ might contain only a single rung, while the other contains a ladder with $l - 1$ rungs. Again, so many rungs cannot be contained in $C$.
If they were contained in $W$ instead, we observe that this ladder can be split into (at most) two (inclusion-)maximal subladders $L_1, L_2$, where both have paths connecting their stringers to $c$ and $d$.

This time, we need to distinguish where $a$ and $b$ lie. As the situation is symmetric, suppose that $L_1$ contains at least as many vertices of $a$ and $b$ as $L_2$ does.

If $L_1$ contains both $a$ and $b$, we conclude that they cannot lie in $L_2$ or its paths to $c$ and $d$. Therefore, we can apply Lemma~\ref{tools:lem:StringersOutAtCD}~(i) to see that $L_2$ can only contain $2$ rungs, while $L_1$ has at most $9$ by Lemma~\ref{tools:lem:StringersOutAtCD}~(vi) in this case. This sums up to at most $11$~rungs in total.

If $L_1$ contains exactly one of $a$ and $b$ (say $a$), then $b$ might lie on $L_2$ or the paths to $c$ and $d$.
If $b$ is in $L_2$, we get at most $5$~rungs for each of $L_1$ and $L_2$ by Lemma~\ref{tools:lem:StringersOutAtCD}~(iv), which sums up to $10$~rungs in total.
If $b$ is not in $L_2$, it may lie on the paths to $c$ and $d$ for both of $L_1$ and $L_2$. Even so, we get at most $8$~rungs for $L_1$ by Lemma~\ref{tools:lem:StringersOutAtCD}~(v) and at most $4$~rungs for $L_2$ by Lemma~\ref{tools:lem:StringersOutAtCD}~(ii), which sums up to $12$~rungs in total.
In every case, we got an upper bound of $12 < l - 1$~rungs or less for $U_{in}$, a contradiction.

%
%

Finally, $c$ and $d$ could split $U$ into two subladders, where one is contained in $U_{in}$ and the other in $U_{out}$. Again, they must together contain all $l$ rungs.
In $U_{out}$, there can be only $l_C$ rungs. In $U_{in}$, we use Lemma~\ref{tools:lem:StringersOutAtCD}~(vi) to see there can be only 9~rungs.
Together, this yields an upper bound of $l_C + 9 \leq l - 1$ rungs for $U$, a contradiction. We conclude:

\begin{center}
\emph{$U$ has edges in each of $A$, $C$ and $W$.}
\end{center}

As $A$ and $C$ are different (and therefore disconnected) components of $G^* - W$, we can conclude that $U_{out}$ consists of at least two components, where at least one is in each of $A$ and $C$.

\vspace{0.2cm}
\noindent \textbf{Claim 3:} \emph{At least one of $A$ and $C$ must contain at least one edge of a rung of $U$.}

Suppose $A$ and $C$ do not contain a single edge of any rung of $U$. Our last conclusion shows that there must be some edges of $U$ in both $A$ and $C$, so those must be part of a ladder stringer. As there are only two vertices each ($a, b$ and $c, d$) separating the parts of $U_{out}$ from $U_{in}$, these parts of a ladder stringer of $U$ must each be between two adjacent rungs.

Now there are two cases:

First, the parts of the ladder stringers in $U_{out}$ could lie between the very same rungs of $U$.

Now they could lie on the same ladder stringer, meaning that $U_{in}$ contains two (inclusion-) maximal subladders with $l$ rungs in total, connected via a path at one stringer. Furthermore, each subladder has one more stringer continued via a path. One of them contains $a$ or $b$, the other contains $c$ or $d$. By Lemma~\ref{tools:lem:twoLaddersWithAZPaths}, the subladders can only have $12 \leq l - 2 < l$ rungs in total, a contradiction.

Otherwise, $a, b$ and $c, d$ must lie on different ladder stringers. Then $U_{in}$ is disconnected into two components, each containing an (inclusion-) maximal subladder that has one ladder stringer continued to one of $a$ and $b$ and the other to $c$ or $d$. The sum of rungs of both subladders is again $l$.
By Lemma~\ref{tools:lem:StringersOutAtSameEndViaAandZ}, each of those subladders can contain at most 5~rungs. This results in an upper bound of $10 \leq l - 4$ rungs for $U$, a contradiction.

Second, the ladder stringer parts in $U_{out}$ could lie between different rungs, resulting in $U_{in}$ being still connected. Moreover, $U_{in}$ will then contain exactly three (inclusion-) maximal subladders that together contain all $l$ rungs of $U$.
By Lemma~\ref{tools:lem:threeLaddersWithABCDPaths}, we get at most $11 \leq l - 3$ rungs for this collection, a contradiction.

\vspace{0.2cm}
\noindent \textbf{Claim 4:} \emph{$A$ must contain at least one edge of a rung of $U$.}
\vspace{0.2cm}

Suppose $A$ only contains part of a ladder stringer. As we have proven Claim~3, we know that $C$ must then contain some edge of a rung of $U$.
Now there are two cases:

First, $C$ might only contain part of a single rung of $U$.
Then all other $l - 1$ rungs must be contained in $U_{in}$. $U_{in}$ must then contain exactly two subladders which are connected at one stringer via a path. 
We apply Lemma~\ref{tools:lem:StringersOutAtSameEndViaAandZ} to see that each subladder contains at most $5$~rungs, which means $U_{in}$ can only contain $10 \leq l - 4 < l - 1$ rungs, a contradiction.

Second, $C$ might contain part of several rungs of $U$. This is only possible if $c$ and $d$ are situated on the ladder stringers of $U$ and $C$ contains a proper subladder of $U$. Now $U_{out}$ can contain up to $l_C$ rungs.
Then $U_{in}$ must again contain exactly two (inclusion-) maximal subladders $L_1, L_2$ which are connected via a path at one ladder stringer. This time, however, one of them (say $L_1$) also has both of its ladder stringers continued to $c$ and $d$ at the other end of $L_1$.
Moreover, each of them also has one stringer continued via a path to $a$ or $b$.

Let $L'_1$ be the (unique) maximal subladders of $L_1 - \{a, b\}$.
If $L'_1$ contains more than one rung, it is situated in a single layer of $W$. But $L'_1$ has four paths continuing its stringers, where only one of them may contain $a$ or $b$. This is a contradiction.
Therefore, $L'_1$ may only contain $1$~rung, which implies at most $2$~rungs for $L_1$. 
By Lemma~\ref{tools:lem:StringersOutAtSameEndViaAandZ}, $L_2$ has at most $5$~rungs. This sums up to at most $2 + 5 = 7$~rungs for $U_{in}$.
Together with $l_C$ rungs in $U_{out}$, we get at most $7 + l_C \leq l - 3$~rungs for $U$, a contradiction.

\vspace{0.2cm}
\noindent \textbf{Claim 5:} \emph{Both $A$ and $C$ must each contain at least one edge of a rung of $U$.}
\vspace{0.2cm}

We have seen that the claim is true for $A$. Therefore, assume it would not hold for $C$.
Similar to Claim~4, there are two cases:

First, $A$ might only contain part of a single rung of $U$.
Then all other $l - 1$ rungs must be contained in $U_{in}$. $U_{in}$ must then contain exactly two (inclusion-) maximal subladders $L_1, L_2$ which are connected at one stringer via a path.
As $a$ and $b$ are incident with the same rung $R$ of $U$ and $R$ is not a rung of $L_1$ or $L_2$, we can conclude that $a$ and $b$ are either both in $L_1$ (or both in $L_2$), but not on the rung that is incident with the paths to $c$ and $d$, or $a$ and $b$ are both not on $L_1$ or $L_2$ at all.

If $a$ and $b$ are both in $L_1$ or $L_2$, then each of $L_1$ and $L_2$ has two disjoint paths continuing their stringers to $c$ and $d$. Furthermore, none of those paths contains $a$ or $b$. (One path goes through the other ladder, and we use that $a$ and $b$ are not on the first rung.)
This implies that one of $L_1$ and $L_2$ contains at most $9$~rungs by Lemma~\ref{tools:lem:StringersOutAtCD}~(vi), while the other can contain only $2$~rungs by Lemma~\ref{tools:lem:StringersOutAtCD}~(i). This sums up to at most $11 < l - 1$~rungs for $U_{in}$, a contradiction.

If $a$ and $b$ were not in $L_1$ or $L_2$, each of the ladders con contain at most $5$~rungs by Lemma~\ref{LadderInCondensedWall}~(i). This sums up to at most $10 < l - 1$~rungs for $U$, which is again a contradiction.

Second, $A$ might contain part of several rungs of $U$, resulting in $A$ containing a proper subladder of $U$. This subladder can contain up to $l_A$ rungs.
$U_{in}$ now contains exactly two (inclusion-) maximal subladders $L_1, L_2$ which are connected at one stringer via a path. Furthermore, one of the subladders (say $L_1$) has stringers continued to $a$ and $b$.

We use the argument from the first case again to see that each of $L_1, L_2$ has a path connecting it to $c$ and another to $d$, both starting at its ladder stringers at the same end of the subladder.
Furthermore, $L_2$ cannot contain $a$ or $b$. In $L_1$, $a$ and $b$ are used to continue its stringers at the same end, so $L_1 - a - b$ contains a subladder $L'_1$ which has at most one rung less than $L_1$ and contains neither $a$ nor $b$.
We can therefore apply Lemma~\ref{tools:lem:StringersOutAtCD}~(i) to see that $L'_1$ and $L_2$ can only contain up to 2~rungs each. This means $L_1$ can only contain up to 3~rungs, which implies an upper bound of $3 + 2 = 5$ rungs for $U_{in}$.
Together with $l_A$ rungs in $U_{out}$, we arrive at an upper bound of $5 + l_A \leq l - 2$ rungs for $U$, a contradiction.

\vspace{0.2cm}

For our next claim, we pick among all possible rungs $R_A$ that are (at least partly) contained in $A$ and among all possible rungs $R_C$ that are (at least partly) contained in $C$ those two that are closest to each other (measured in the number of rungs between them).

Let $L_W$ be the subladder of $U$ that contains exactly all rungs between $R_A$ and $R_C$.
Let $L_{R_A}$ be the (inclusion-) maximal subladder of $U - L_W$ that contains $R_A$, and let $L_{R_C}$ be the (inclusion-) maximal subladder of $U - L_W$ that contains $R_C$.
Then all rungs of $U$ are contained in exactly one of $L_{R_A}, L_W$ and $L_{R_C}$.

\vspace{0.2cm}
\noindent \textbf{Claim 6:} \emph{$L_W$ contains at least 3~rungs.}
\vspace{0.2cm}

First, we will see how many rungs there can be situated in $L_{R_A}$ and $L_{R_C}$.
If $A$ contains (a part of) more than one rung, it must contain all rungs completely. Furthermore, all rungs of $L_{R_A}$ must then be in $A$, resulting in $l_A$ rungs.
Alternatively, $A$ might contain part of only one rung. Then the rest of $L_{R_A}$ must be contained in $W$. There, it contains exactly one (inclusion-) maximal subladder $L_1$ that must contain all but one rung of $L_{R_A}$. Furthermore, $L_1$ has both stringers at one end continued by one path each to $a$ and $b$.
By Lemma~\ref{tools:lem:StringersOutAtSameEndViaAB}, there can be only 6~rungs of $L_{R_A}$ in $W$, resulting in up to $6 + 1 = 7 \leq l_A$ rungs for $L_{R_A}$.

The same argumentation also holds for $C$:
$C$ might contain all rungs of $L_{R_C}$, resulting in $l_C$ rungs.
The only alternative would be that $C$ only contains part of a single rung of $L_{R_C}$. But then all other rungs of $L_{R_C}$ must be part of a single subladder $L_2$. At one end of $L_2$, both stringers are continued via one path each to $c$ and $d$, respectively.
Furthermore, $L_2$ is nor allowed to contain $a$ or $b$ as those vertices are somewhere on the border of $L_{R_A}$ and $L_W$.
We can therefore conclude that $L_2$ can only contain up to 2~rungs by Lemma~\ref{tools:lem:StringersOutAtCD}~(i). This results in an upper bound of $2 + 1 = 3 < l_C$ rungs for 
$L_{R_C}$.

Now, we can conclude that $L_{R_A}$ can only contain up to $l_A$ rungs, while $L_{R_C}$ can contain at most $l_C$ rungs.
This means there must be at least 3~rungs left for $L_W$, proving Claim~6.

\vspace{0.2cm}
\noindent \textbf{Claim 7:} \emph{$U$ contains an X-wing in $W$.}
\vspace{0.2cm}

Let $v_1, v_2$ be the endvertices of the part of $R_A$ that is contained in $A$. Similarly, let $v_3, v_4$ be the endvertices of the part of $R_C$ that is contained in $C$.
In $L_{R_A}$, $v_1$ has two (internally) disjoint paths to the ladder stringers $S_1, S_2$ of $U$. One of them crosses $v_2$, so we pick the other path $Q_1$. Say $Q_1$ connects $v_1$ to $S_1$.
Similarly, there is a path $Q_2$ on $L_{R_A}$ that does not cross $v_1$ and connects $v_2$ to $S_2$.

$S_1$ is connected to all rungs of $U$, so in particular, we can find a path $T_1$ on $S_1$ that connects $Q_1$ with the first rung of $L_W$ (the one that is closest to $L_{R_A}$). Together the paths $Q_1$ and $T_1$ form a path $P'_1$ that connects $v_1$ to a stringer of $L_W$. Thereby, $P'_1$ needs to enter $W$ via $a$ or $b$. Therefore, $P'_1$ contains $a$ or $b$.
This means that $P'_1$ contains a subpath $P_1$ in $W$ connecting $a$ or $b$ to the end of a stringer of $L_W$.
Similarly, we can find a path $P_2$ in $W$ connecting the other vertex of $a$ and $b$ to the other end of a stringer of $L_W$ (situated at the same end of $L_W$).

On the other side of $L_W$, we apply the same approach to $v_3$ and $v_4$ to find a path $P_3$ in $W$ connecting $c$ to the end of a stringer of $L_W$, and another path $P_4$ connecting $d$ to the last stringer end of $L_W$.
Note that by construction, $P_1, P_2, P_3, P_4$ and $L_W$ are internally disjoint and lie in $W$.
Together, they form an X-wing.
\qed
\end{proof}

\section{Small Ladder}\label{SmallLadder}
In this section, we will prove that the elementary ladder with three rungs has the edge-\EPP.
With that end in view, from now on, a \emph{ladder} is always supposed to be a subdivision of an elementary ladder with three rungs.

The proof goes as follows: We start with an arbitrary $2$-connected graph $G$ that contains a single vertex $v$ intersecting all ladders in said graph. We choose a tree in $G-v$ that contains all neighbours of $v$ that has certain properties. Then, if $T$ is large in some sense, we find many edge-disjoint ladders inside the union of $T$ and $v$. Otherwise, $G$ has a very simple structure (after removing some edges), which we can exploit to either find many edge-disjoint ladders or a bounded edge set intersecting all ladders.

This implies that the edge-\EPP\ holds whenever $G$ is a $2$-connected graph that contains a single vertex intersecting all ladders. We take advantage of a few useful techniques when dealing with the edge-\EPP, to see that then the edge-\EPP\ also holds in general, that is when $G$ is an arbitrary graph.

Almost the same proof can be used to prove that the \emph{house graph} (Figure~\ref{house fig}) has the edge-\EPP~as well.

\begin{figure}[!htb]
\centering
  \begin{tikzpicture}[scale=0.1, rotate=90]

			\node at (0,0) [smallvx](1){};
			\node at (0,10) [smallvx] (2) {};
			\node at (10,0) [smallvx] (3) {};
			\node at (10,10) [smallvx] (4){};
			\node at (15,5) [smallvx] (5){};

			\path[hedge]
			(1) edge node {} (2)
 				edge node {} (3)

			(4) edge node {} (2)
				edge node {} (3)
				edge node {} (5)

			(3) edge node {} (5);

	\end{tikzpicture}
  	\caption{The house graph.}\label{house fig}
\end{figure}

\subsection{Useful Techniques}
We start by proving that it is sufficient to only check the edge-\EPP\ in graphs that are $2$-connected and contain a single vertex intersecting all ladders. Furthermore, we may assume that such graphs do not contain a ladder with a bounded number of edges. Essentially, we only need to check whether the edge-\EPP\ holds in a specific subset of all graphs to deduce that it holds in general. These techniques have been used before (see for example in \cite{BH18}) and may be useful in many different settings. 

In the following, let $\mathcal{H}$ be any class of graphs. We say that $\mathcal{H}$ has the \emph{edge-{E}rd{\H o}s-{P\'o}sa property in a family of graphs $\mathcal{G}$} if there is a function $f:\mathbb{N}\to\mathbb{N}$ such that for every $k \in \mathbb{N}$ every \mbox{graph $G \in \mathcal{G}$} either contains $k$ edge-disjoint members of $\mathcal{H}$ or an edge hitting set for those of size at most $f(k)$.

If we want to show that $\mathcal{H}$ has the edge-\EPP, then the first lemma tells us that we may specify an integer $m$ at the beginning of the proof and then only check the edge-\EPP\ in graphs that do not contain a member of $\mathcal{H}$ with at most $m$ edges.

\begin{lemma} \label{smallexp}
Let $\mathcal{G}_\mathcal{H}^m$ be the family of graphs that do not contain a member of~$\mathcal{H}$ with at most $m$ edges. If $\mathcal{H}$ has the edge-\EPP\ in~$\mathcal{G}_\mathcal{H}^m$ then it already has the edge-\EPP\ in the family of all graphs. If $g$ is an edge-\EP\ function in~$\mathcal{G}_\mathcal{H}^m$ that satisfies \mbox{$g(k)\geq g(k-1)+m$} for all $k\geq 2$ then $g$ is also an edge-\EP\ function in the family of all graphs.

\end{lemma}

\begin{proof}
We know that the edge-\EPP~holds in the class $\mathcal{G}_\mathcal{H}^m$. Hence, we only need to check the edge-\EPP\ for all graphs that contain a member of $\mathcal{H}$ with at most $m$ edges. This, however, is very easy. For any such graph, just remove that member and do induction on $k$.
\qed
\end{proof}

%

With the next lemma, we may only look at graphs that contain a single vertex intersecting all members of $\mathcal{H}$, provided $\mathcal{H}$ has the vertex-\EPP. Such a vertex is also called \emph{$1$-vertex-hitting-set}.

\begin{lemma}[$1$-vertex-hitting-set \cite{BH18}] \label{1vhs}
Let $\mathcal{G}_\mathcal{H}^*$ be the family of graphs that contain a vertex that intersects all members of $\mathcal{H}$ and let $\mathcal{H}$ have the vertex-{E}rd{\H o}s-{P\'o}sa property. If $\mathcal{H}$ has the edge-{E}rd{\H o}s-{P\'o}sa property in $\mathcal{G}_\mathcal{H}^*$, then it has the edge-{E}rd{\H o}s-{P\'o}sa property in the family of all graphs. Furthermore, if $f$ is a vertex-\EP\ function and $g$ is an edge-\EP\ function in $\mathcal{G}_\mathcal{H}^*$, then $f \cdot g$ is an edge-\EP\ function in the family of all graphs.
\end{lemma}

\begin{proof}
Let $k$ be a positive integer and let $G$ be any graph.
Let $X$ be a minimal vertex hitting set in $G$. 
By induction on the size of $X$, we can show that there are either $k$ edge-disjoint subgraphs of $G$ that belong to $\mathcal{H}$ or a set of at most~$|X|g(k)$ edges that intersect all subgraphs of $G$ that belong to $\mathcal{H}$ (this is not obvious but not very hard to prove either). Note that $G$ belongs to $\mathcal{G}_\mathcal{H}^*$ if $|X|=1$ which implies the induction start.

Since the class $\mathcal{H}$ has the vertex-\EPP, there are either $k$ disjoint subgraphs of $G$ that belong to $\mathcal{H}$, which are also edge-disjoint, or a vertex hitting set of size at most $f(k)$. In the latter case together with the previous observation, we find an edge hitting set of size at most $f(k)g(k)$.
\qed
\end{proof}

The last lemma tells us that we may disregard all graphs that are not connected if all members of $\mathcal{H}$ are connected. On top of this, we may also disregard all graphs that are not $2$-connected if all members of $\mathcal{H}$ are $2$-connected.

\begin{lemma}\label{epp two conn}
For $i\in\{1,2\}$, let $\mathcal{G}_i$ be the family of $i$-connected graphs. Fix $i\in\{1,2\}$ and let $\mathcal{H}$ be a class of graphs such that each member is $i$-connected. If $\mathcal{H}$ has the edge-\EPP\ in $\mathcal{G}_i$ then it has the edge-\EPP\ in the family of all graphs. If $g$ is an edge-\EP\ function in $\mathcal{G}_i$ and either
\begin{enumerate}[label=\upshape(\roman*)]
\item it is a linear function of the form $g(k) = c(k-1)$ for some $c > 0$ or
\item it is a polynomial with non-negative coefficients such that all terms with non-zero coefficients have degree at least $2$ 
\end{enumerate} then it is also an edge-\EP\ function in the family of all graphs.
\end{lemma}

\begin{proof}
Let $\mathcal{H}$ have the edge-\EPP~in $\mathcal{G}_1$ with hitting set bound~$g$. Let $G$ be a graph with multiple components and let $k\in\mathbb{N}$. Since all members of $\mathcal{H}$ are connected, we can deal with the components separately. First, remove all components that do not contain a member of $\mathcal{H}$ as they are not needed. 
Let $\ell_j$ be the maxmimum number of edge-disjoint members of $\mathcal{H}$ in the $j$-th component of~$G$. If the sum of the $\ell_j$ is at least $k$, then we find $k$ edge-disjoint members of $\mathcal{H}$. So we may assume that the sum of the $\ell_j$ is smaller than $k$.
Now each component is $1$-connected and we can use the assumptions to find an edge-hitting set of size at most $g(\ell_j+1)$ in the $j$-th component. One can check that if $g$ satisfies one of the conditions in the statement, that the sum of $g(\ell_j+1)$ over all components is at most $g(k)$.

If all members of $\mathcal{H}$ are $2$-connected, one can do the same by looking at the blocks of $G$ instead of 
the components.
\qed
\end{proof}

Technically, as these lemmas are stated, we cannot use them together. However it is possible to apply Lemmas~\ref{epp two conn} and~\ref{smallexp} iteratively after Lemma~\ref{1vhs}.

\subsection{$(A,m)$-Trees}
Throughout this section, $A$ will be assumed to be a subset of the vertices of a graph.
By Mader's theorem~\cite{Mad78}, \emph{$A$-paths} have the edge-\EPP, that is paths whose endvertices lie in $A$ and which are otherwise disjoint from~$A$. 
These paths are trees that contain exactly two vertices of the vertex set $A$.
For the proof of the main theorem, we need to generalize this notion to trees that contain more than two vertices of the vertex set~$A$.
An \emph{$(A,m)$-tree} is a tree that contains $m$ vertices of $A$. In this section, we will prove that these trees have the edge-\EPP. The proof relies on \emph{$A$-Steiner-trees}, which are trees that contain all vertices of $A$. Kriesell \cite{Kriesell03} conjectured that if no edge set of size at most $2k-1$ separates two vertices of $A$ in a graph $G$, then there are $k$ edge-disjoint $A$-Steiner-trees in $G$. In other words, the $A$-Steiner-trees have the edge-\EPP\ with edge-\EP\ function $2k-1$. There have been multiple advances on this conjecture.
First, Lau~\cite{Lau07} proved Kriesell's conjecture when there is no edge set of size at most $26k-1$ that separates $A$. West and Wu~\cite{WW12} improved this bound to $6.5k-1$. Finally as of now, the best known bound is $5k+3$, which was proven by DeVos, McDonald and Pivotto~\cite{DeMcPi16}. 

\begin{theorem}[DeVos, McDonald and Pivotto \cite{DeMcPi16}]\label{thm:steiner}
$A$-Steiner-trees have the edge-\EPP~with edge-\EP\ function $5k+3$.
\end{theorem}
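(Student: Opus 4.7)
The plan is to deduce the edge-Erd\H{o}s-P\'osa property directly from the Devos-McDonald-Pivotto packing theorem cited immediately above the statement. The key observation, used implicitly throughout the Erd\H{o}s-P\'osa literature, is that any edge-set $F \subseteq E(G)$ that \emph{separates} $A$ in $G$ -- in the sense that $G-F$ has at least two components each meeting $A$ -- necessarily meets every $A$-Steiner-tree in $G$: such a tree is, by definition, a connected subgraph containing all of $A$, hence it must use at least one edge of any edge cut disconnecting two vertices of $A$.

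With this observation in hand, the argument I would give is a direct case distinction. Given a graph $G$, a set $A \subseteq V(G)$, and an integer $k \geq 1$, I would first ask whether there is an edge-set $F \subseteq E(G)$ of size at most $5k+3$ that separates $A$. If such an $F$ exists, it is a hitting set of the required size and we are done. Otherwise every edge-set that separates $A$ has size at least $5k+4$, so the hypothesis of the Devos-McDonald-Pivotto theorem is satisfied and we obtain $k$ edge-disjoint $A$-Steiner-trees in $G$. In either branch we have verified the edge-Erd\H{o}s-P\'osa property with function $f(k) = 5k+3$.

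In this proof the genuinely hard content is packaged into the Devos-McDonald-Pivotto theorem; the only real work on our side is the elementary but indispensable duality between separators of $A$ and hitting sets of $A$-Steiner-trees. Hence the main obstacle is not in the reduction but in the fact that the Kriesell-type packing bound is not yet known with the conjectural constant $2k-1$; consequently the best Erd\H{o}s-P\'osa function we can extract this way is linear with slope $5$, and any improvement toward the Kriesell conjecture would automatically improve the hitting-set bound here.
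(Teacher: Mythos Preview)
Your proposal is correct and matches exactly how the paper treats this theorem: the paper does not give an explicit proof but cites the Devos--McDonald--Pivotto result (stated in the sentence immediately preceding the theorem) that if every edge-separator of $A$ has size at least $5k+4$ then $G$ contains $k$ edge-disjoint $A$-Steiner-trees, and Theorem~\ref{thm:steiner} is precisely the edge-Erd\H{o}s--P\'osa reformulation obtained via your dichotomy. There is nothing to add.
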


In order to prove that the $(A,m)$-trees have the edge-\EPP, a lemma is necessary.
For that we define, a \emph{mark} in a graph $G$ as a function $f:V(G)\to \{0,1\}$; a vertex $v\in V(G)$ is \emph{marked} \mbox{if $f(v)=1$}. When we decompose~$G$ into edge-disjoint subgraphs $G_1,\ldots,G_k$ such that $G=\bigcup_{i=1}^kG_i$, we say that this decomposition is \emph{$m$-good} if for $i\in[n]$, there is a mark~$f_i$ for~$G_i$ such that at least $m$ vertices in $G_i$ are marked and additionally, for any $v\in V(G)$, it holds that $\sum_{i=1}^n f_i(v)=f(v)$. The last condition means that each marked vertex of~$G$ is marked in exactly one graph~$G_i$.

Raymond, Sau and Thilikos \cite{RST16} showed that for any $m\in\N$ and any tree $T$ with sufficiently many marked vertices,
there is an $m$-good decomposition of $T$ into two subtrees. 
Through an induction, one obtains a generalization of this.

\begin{lemma} \label{edge tree decomposition}
Let $k, m \in \mathbb{N}$, let $T$ be a tree  and let $f:V(T)\to \{0,1\}$ be a mark in~$T$. Suppose there are at least $2mk$ marked vertices in $T$. There exist edge-disjoint subtrees $T_1, \ldots, T_k$ of $T$ with $T=\bigcup_{i=1}^k T_i$ such that this decomposition is $m$-good.
\end{lemma}

Note that a marked vertex of $T$ may be contained in multiple subtrees $T_i$ but it counts as a marked vertex only for one of those subtrees. 

Now we are ready to prove that the $(A,m)$-trees have the edge-\EPP.

\begin{theorem}\label{thm:AmTree}
Let $m\geq 2$ be an integer. The $(A,m)$-trees have the edge-\EPP\ with edge-\EP\ function $f(k)=2m^2k^2$.
\end{theorem}

\begin{proof}
Let $k\in\mathbb{N}$ and let $G$ be a graph and $A\subseteq V(G)$.
By Lemma \ref{epp two conn}, we may assume that~$G$ is a connected graph. 
We do induction on the size of $A$ to show that
\begin{equation}\label{AmtreeInd}
\begin{minipage}[c]{0.8\textwidth}\em
$G$ contains either $k$ edge-disjoint $(A,m)$-trees or an edge hitting set of size at most $(m-1)k|A|$ for these trees.
\end{minipage}\ignorespacesafterend 
\end{equation}

If $|A|<m$, the empty set intersects all $(A,m)$-trees since there are no $(A,m)$-trees in~$G$. Thus, the induction start is true.
Now assume that $|A|\geq m$ and let $a\in A$. For each vertex $v\in A\setminus\{a\}$, add a vertex set $A_v$ of $k$ new vertices to $G$ such that each vertex in $A_v$ is adjacent only to $v$.
Let $G'$ be the resulting graph.

Suppose there is a set $\cP$ of $(m-1)k$ edge-disjoint paths in $G'$ from $a$ to the vertices in $A^*=\bigcup_{v\in A\setminus\{a\}} A_v$. Order the vertices $v\in A$ by the number of paths that end in $A_v$, that is, let $v_1\in A$ such that the most paths of $\cP$ end in $A_{v_1}$, $v_2\in A$ the second most and so on.
By construction, at most $k$ paths of $\cP$ may end in any given set $A_v$. Therefore, there are at least $m-1$ distinct sets $A_v$ such that a path of $\cP$ ends in $A_v$. Thus, for $i\in[m-1]$, at least one path $P_i\in\cP$ ends in $A_{v_i}$. Since all paths in $\cP$ contain $a$, the union $\bigcup_{i=1}^{m-1}P_i$ is a connected graph and moreover, it contains the vertices $a,v_1,\ldots, v_{m-1}$. Any spanning tree of this union restricted to $G$ is a tree that contains at least $m$ vertices of $A$ and, therefore, contains an $(A, m)$-tree~$T_1$. 
Remove the paths $P_1,\ldots,P_n$ from $\cP$ and observe that at most $k-1$ of the remaining paths in $\cP$ end in any given 
set~$A_v$. Inductively, we can deduce that there are $k-1$ edge-disjoint $(A,m)$-trees $T_2,\ldots,T_k$ in the union of the remaining paths of $\cP$. Together with $T_1$, these are $k$ edge-disjoint $(A,m)$-trees in $G$ and we are done.

Therefore by the edge-version of Menger's theorem, there is an edge hitting set of size at most $(m-1)k-1$ for the paths from $a$ to $A^*$ in $G'$; let $X'$ be an edge hitting set of minimum size. Note that for $v\in A$ and $u\in A_v$, if $uv\in X'$, then all edges incident to $A_v$ lie in $X'$. Indeed, as $X'$ is minimum, $X'\setminus\{e\}$ is not an edge hitting set for each edge $e \in X'$. Thus, there is a path from $a$ to $u$ in $G'-(X'\setminus\{e\})$, in particular, there is a path~$P$ from~$a$ to~$v$ in~$G'-X'$. If there was an edge $f\notin X'$ incident to $A_v$, then $P\cup f$ would be a path in~$G'$ from $a$ to $A_v$ that is not met by $X'$. This would be a contradiction.

It follows that either all edges incident to a given set $A_v$ lie in $X'$ or none. As the number of edges incident to $A_v$ is $k$ and as the size of $X'$ is less than $(m-1)k$, there is a set $A'\subseteq A$ of size at most $m-2$ that contains the vertices $v$ such that the edges incident to $A_v$ are contained in $X'$. Let $X=X'\cap E(G)$; clearly, $|X|\leq |X'|$. In $G-X$, only the vertices in $A'$ may lie in the same component as $a$, however, as $|A'\cup\{a\}|<m$, there is no $(A, m)$-tree in $G-X$ that contains $a$ or any vertex in $A'$.

Apply induction on $G-X$ with vertex set $A^*=A\setminus (A'\cup \{a\})$.
There are either $k$ edge-disjoint $(A^*,m)$-trees in $G-X$ or a set $Y$ of at most $(m-1)k|A^*|$ edges that intersect all $(A,m)$-trees in $G-X$. Observe in the first case that any $(A^*,m)$-tree is an $(A,m)$-tree and in the latter case that $X\cup Y$ is an edge hitting set for the $(A,m)$-trees of size at most \mbox{$(m-1)k+(m-1)k|A^*|\leq (m-1)k|A|$.}
This concludes the proof of (\ref{AmtreeInd}).

\medskip

This almost finishes the proof of the theorem, we just need to bound the size of $A$.
Suppose the size of $A$ is at least $2mk$. 
Since $G$ is connected, there is a spanning tree~$T$ in~$G$, which contains all vertices of~$A$.
Introduce a mark $f$ on $T$ such that exactly the vertices in $A$ are marked.
Apply Lemma \ref{edge tree decomposition} to~$T$ to find an $m$-good decomposition $T_1,\ldots,T_k$ of $T$. 
Since each tree $T_i$ contains at least $m$ marked vertices, it is an $(A,m)$-tree (or rather contains one). As the trees $T_1,\ldots, T_k$ are edge-disjoint, we are done.

Therefore, we may assume that the size of $A$ is bounded by $2mk$.
Together with~(\ref{AmtreeInd}) this implies that the size of an edge hitting set may be bounded by $2m^2k^2$. This concludes the proof of the Theorem.
\qed
\end{proof}

When we choose $m=2$, this is actually a very simple proof that shows that the $A$-paths have the edge-\EPP, admittedly, with a worse edge-\EP\ function than Mader obtained.

\subsection{Graphs without Ladders (or Houses)}
In this section, we are going to characterize the graphs that do not contain a ladder.
Recall that $\Theta_r$ is the multigraph on two vertices with $r$ parallel edges between them. 
We call a graph that is a subdivision of $\Theta_r$ where each edge has been subdivided at most once a \emph{short $\Theta_r$} or just \emph{short $\Theta$}, see Figure \ref{fig:shortTheta}. Note that, as short $\Theta$'s are graphs, all but at most one edge have to be subdivided.
Let $H$ be a short $\Theta$.
The \emph{endvertices} of $H$ are the two vertices that are not inside a subdivided edge.
We call the other vertices of $H$ \emph{interior vertices} and the \emph{order} of $H$ is the number of its interior vertices, that is, $|V(H)|-2$. For technical reasons, we also call a diamond (see Figure \ref{fig:diamond}) a short $\Theta_2$, where the endvertices are the vertices of degree~$2$. 
\begin{figure}[!htb]

\subfigure{0.49\textwidth}
\centering
  \begin{tikzpicture}[scale=0.1]

\node at (10,0) [smallvx](1){};
\node at (0,10) [smallvx] (2) {};
\node at (5,10) [smallvx] (3) {};
\node at (10,10) [smallvx] (4) {};
\node at (15,10) [smallvx] (5){};
\node at (20,10) [smallvx] (6){};
\node at (10,20) [smallvx] (7){};

\path[hedge]
(1) edge node {} (2)
 	edge node {} (3)
 	edge node {} (4)
 	edge node {} (5)
 	edge node {} (6)

(7) edge node {} (2)
 	edge node {} (3)
 	edge node {} (4)
 	edge node {} (5)
 	edge node {} (6);

\path[hedge, bend right=20]
(1) edge node {} (7);

\draw[->] (30,10) [bend right=50] to (12,20);
\draw[->] (30,10) [bend left=50] to (12,0);
\node[label={endvertices}] at (41,6)  {};

\draw[->] (-7,10) to (-2,10);
\node[align=left] at (-15,9) {interior\\ vertex};
\end{tikzpicture}
  \caption{A short $\Theta_6$ of order~$5$.}\label{fig:shortTheta}
\endsubfigure
\subfigure{0.34\textwidth}
\centering
  \begin{tikzpicture}[scale=0.1]

\node at (10,0) [smallvx](1){};
\node at (5,10) [smallvx] (2) {};
\node at (15,10) [smallvx] (5){};
\node at (10,20) [smallvx] (7){};

\path[hedge]
(1) edge node {} (2)
 	edge node {} (5)

(7) edge node {} (2)
 	edge node {} (5)

(2) edge node {} (5);

\end{tikzpicture}
  \caption{The diamond graph, a short~$\Theta_2$ of order~$2$ with endvertices on top and bottom.}\label{fig:diamond}
\endsubfigure\hfill
\caption{}
\end{figure}

For $i \in [m]$, let $\Theta^i$ be a short $\Theta$ with endvertices $v_1^i$ and $v_2^i$ such that $\Theta^1, \ldots, \Theta^m$ are disjoint. We now identify $v_2^i$ and $v_1^{i+1}$ for $i\in [m-1]$ and also $v_2^m$ and $v_1^1$. We call the obtained graph a \emph{circular ordering of short $\Theta$'s}, see Figure \ref{circTheta}. 
Circular orderings of $\Theta$'s are exactly the $2$-connected graphs that do not
contain a ladder.

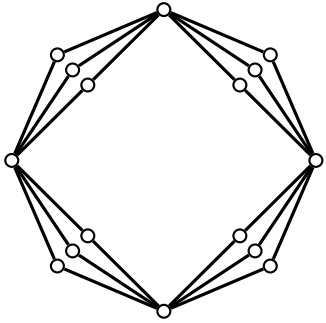
\begin{figure}[htb]
\center
\begin{tikzpicture}[scale=0.2]
\node at (10,0) [smallvx](xij){};
\node at (20,10) [smallvx](xij1){};
\node at (10,20) [smallvx](xij2){};
\node at (0,10) [smallvx](xij3){};

\node at (15,5) [smallvx](l11){};
\node at (16,4) [smallvx](l12){};
\node at (17,3) [smallvx](l13){};
\node at (14,6) [smallvx](l14){};
\node at (16,16) [smallvx](l22){};
\node at (14,14) [smallvx](l23){};
\node at (5,15) [smallvx](l31){};
\node at (4,16) [smallvx](l32){};
\node at (3,17) [smallvx](l33){};
\node at (2,18) [smallvx](l34){};
\node at (6,14) [smallvx](l35){};
\node at (5,5) [smallvx](l41){};
\node at (4,4) [smallvx](l42){};
\node at (6,6) [smallvx](l43){};

\path[hedge]
(xij) edge node {} (l11)
 	  edge node {} (l12)
 	  edge node {} (l13)
 	  edge node {} (l14)
 	  edge node {} (l41)
 	  edge node {} (l42)
 	  edge node {} (l43)

(xij1)  edge node {} (l11)
		edge node {} (l12)
		edge node {} (l13)
		edge node {} (l14)
		edge node {} (xij2)
		edge node {} (l22)
		edge node {} (l23)

(xij2)  edge node {} (l22)
		edge node {} (l23)
		edge node {} (l31)
		edge node {} (l32)
		edge node {} (l33)
		edge node {} (l34)
		edge node {} (l35)
		
(xij3)  edge node {} (l31)
		edge node {} (l32)
		edge node {} (l33)
		edge node {} (l34)
		edge node {} (l35)
		edge node {} (l41)
		edge node {} (l42)
		edge node {} (l43);

\end{tikzpicture}
\caption{A circular ordering of short $\Theta$'s.}\label{circTheta}
\end{figure}

\begin{lemma}\label{thm:noladder}
Let $G$ be a $2$-connected graph with at least six vertices.
Then $G$ does not contain a ladder if and only if $G$ is a circular ordering of short $\Theta$'s.
Additionally, if the length of a longest cycle in $G$ is at most $4$, then if $G$ does not contain a ladder, the graph $G$ is also a short $\Theta_r$.
\end{lemma}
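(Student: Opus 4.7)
My plan is to use a longest cycle $C$ of $G$ as the backbone. The key tool is Remark \ref{cycle5path3}: if $|C|\ge 5$ then every $C$-path has length at most $2$, hence every bridge of $C$ is either a chord or a length-$2$ path through a single vertex with at least two neighbours on $C$ (a bridge with two interior vertices joined by an edge would yield a $C$-path of length $\ge 3$ and thus, together with $C$, a ladder).

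For the "additionally" clause ($|C|\le 4$) I would argue directly. If $|C|=3$ then no ear can be added to the triangle without creating a longer cycle, so $G=K_3$ and $|V(G)|<6$. If $|C|=4$, a case check rules out ears of length $\ge 3$, length-$2$ ears with adjacent endpoints on $C$, and length-$2$ ears attached to two different diagonals of $C$ (the last of these would give a $6$-cycle); thus all ears are chords or length-$2$ paths attached to a single pair of opposite vertices of $C$, giving a short $\Theta_r$ with those vertices as endvertices.

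For $|C|\ge 5$, I would analyse interleaving pairs of bridges. Given two bridges $B_1,B_2$ whose four attachment points on $C$ alternate in cyclic order, one can construct three internally disjoint paths between a chosen attachment of $B_1$ and a chosen attachment of $B_2$, namely the two arcs of $C$ between them and a path routed through $B_1$. Their lengths are determined by the arc-distances of the bridge endpoints, and two of the lengths are at least $3$ whenever the bridges are sufficiently spread out across $C$, which yields a ladder. The no-ladder hypothesis therefore confines every interleaving pair of bridges to a short arc of $C$, where together with that arc they form a diamond-type short $\Theta_2$. Non-interleaving bridges are parallel to an arc of $C$ and form standard short $\Theta_r$'s. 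I then take as glue points the endpoints of the bridges together with every vertex of $C$ that is not in the interior of a confined interleaving region, and define each short $\Theta^i$ of the desired circular ordering as the arc of $C$ between two consecutive glue points together with all bridges parallel to that arc (or as the diamond absorbing the confined crossings, when appropriate). Every edge of $G$ then lies in exactly one $\Theta^i$, so the assembled circular ordering is $G$ itself.

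The main obstacle will be the delicate casework for interleaving bridges when $|C|$ is $5$ or $6$, where confined crossings can span up to four consecutive vertices of $C$ and must be absorbed into a single diamond piece; careful bookkeeping is required to ensure each edge of $G$ is covered exactly once, and that arcs of $C$ carrying no attached bridges are broken into individual edges to fit the circular ordering definition.
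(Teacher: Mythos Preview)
Your opening move---taking a longest cycle $C$ and using Remark~\ref{cycle5path3} to cap the length of every $C$-path at $2$---is exactly how the paper begins. But you skip the next structural step, which is what makes everything else clean: every bridge of $C$ must have its two feet at distance \emph{exactly} $2$ on $C$. Distance $1$ is ruled out by maximality of $|C|$; distance at least $3$ gives a ladder directly, since the two arcs of $C$ between the feet then both have length $\ge 3$ and together with the bridge form three internally disjoint paths. Moreover, a vertex outside $C$ with three or more neighbours on $C$ already yields a ladder (take the middle neighbour and the outside vertex as the degree-$3$ vertices), so every bridge really is a single chord or a single degree-$2$ vertex attached at a distance-$2$ pair. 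Without this, your claim that ``non-interleaving bridges are parallel to an arc of $C$ and form standard short $\Theta_r$'s'' is unjustified: the arc could be too long for a short $\Theta_r$.

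Once distance $2$ is established, the interleaving analysis is far more constrained than you anticipate. Two interleaving length-$2$ bridges are simply \emph{impossible}: with consecutive vertices $v_1,v_2,v_3,v_4$ on $C$ and outside vertices $z_1\sim v_1,v_3$ and $z_2\sim v_2,v_4$, one finds a ladder with degree-$3$ vertices $v_2,v_3$. So only interleaving \emph{chords} ever need to be absorbed, and the paper handles these by first building the circular ordering from the length-$2$ bridges alone and then reinserting the chords one at a time. Even then there is an exceptional case your outline misses: three pairwise interleaving chords on a $5$-cycle produce a $4$-wheel, which is \emph{not} a circular ordering of short $\Theta_r$'s and must go into the ``fewer than six vertices'' alternative. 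Your plan treats every interleaving pair as absorbable into a diamond and never invokes that alternative for $|C|\ge 5$, so the casework you flag for small $|C|$ is real but resolves differently from what you expect.
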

\begin{proof}
Assume that $G$ is a circular ordering of short $\Theta$'s and suppose that $G$ contains a ladder $L$.
The ladder $L$ contains a cycle $C$ of length at least~$6$. Since the longest cycle in any short $\Theta$ has length at most~$4$, $C$ has to intersect all the endvertices of the short~$\Theta$'s in $G$. Moreover, in $L$ there is a path between two vertices at distance at least~$3$ on~$C$ and whose interior is disjoint from $C$.
However, any path that is disjoint from $C$ in $G$ connects two vertices at distance at most~$2$ on $C$ (the endvertices of a short $\Theta$ or any two vertices in a diamond). This is a contradiction, which means that $G$ does not contain a ladder.

\medskip

Now assume that $G$ is a graph on at least six vertices that does not contain a ladder.
During the proof we will find several ladders if $G$ does not satisfy certain requirements.
As $G$ does not contain any ladders, this would be a contradiction, which immediately implies that $G$ does have to satisfy those requirements.
Usually, when we say that there is a ladder, we will only specify which the two vertices of degree $3$ are. One only has to check that there are three disjoint paths between these two vertices, two of which have length at least $3$.
The following observation is useful for finding ladders.

\begin{equation}\label{cycle5path3}
\begin{minipage}[c]{0.8\textwidth}\em
A cycle $C$ of length at least $5$ together with a $V(C)$-path of length at least $3$ is a ladder.
\end{minipage}\ignorespacesafterend 
\end{equation}

\medskip

Let $C$ be a longest cycle in~$G$.
We start by proving a few structural properties of~$G$.
We say that a vertex $v$ lies outside of $C$ if $v\notin V(C)$.

\vspace{0.2cm}
\noindent \textbf{Claim 1:} \emph{All neighbours of a vertex outside of $C$ lie in $C$.}
\vspace{0.2cm}

Suppose otherwise. Let $w_1,w_2\notin V(C)$ be adjacent vertices
in~$G$. As~$G$ is $2$-connected, for $i\in[2]$, there is a path $P_i$ from $w_i$ to $c_i\in V(C)$ such that $P_1$ and $P_2$ are disjoint. The path $P=P_1 \cup w_1w_2 \cup P_2$ has length at least~$3$. If the length of $C$ is at most~$4$, then there is a path in $C$ between $c_1$ and $c_2$ of length at most $2$. Replacing this path by~$P$, yields a cycle of length at 
least~$5$, which is a contradiction to the maximality of $C$.
If the length of $C$ is at least $5$, then by~(\ref{cycle5path3}), $P\cup C$ is a ladder. This is a contradiction, which proves Claim~1.

\vspace{0.2cm}
\noindent \textbf{Claim 2:} \emph{The degree of every vertex outside of $C$ is exactly $2$. Additionally, any vertex outside of $C$ is adjacent to two vertices at distance~$2$ in $C$.}
\vspace{0.2cm}

Let $u$ be a vertex outside of $C$. As $G$ is $2$-connected, the degree of $u$ has to be at least~$2$ and by Claim~1, all of its neighbours have to be in $C$.
Let $v,w$ be two neighbours of $u$. If the distance in $C$ between $v$ and $w$ is~$1$, that is, $vw\in E(C)$, it is possible to increase the length of $C$ by additionally visiting $u$. This is a contradiction. If the distance between $v$ and $w$ is at least~$3$, 
then $C$ together with the two edges $uv,uw$ is a ladder.
Here the two vertices of degree $3$ are~$v$ and~$w$. Again, we obtain a contradiction, which implies that the distance in $C$ between each pair of neighbours of $u$ is exactly~$2$.

Suppose $u$ has three neighbours $v_1,v_2,v_3$ in $C$. Take any path $P$ in $C$ that contains these three vertices and without loss of generality assume that $v_2$ is in inbetween $v_1$ and $v_3$. Then $P$ together with $uv_1,uv_2,uv_3$ is a ladder where the vertices of degree $3$ are $u$ and $v_2$; a contradiction. Note that we use here that any path in $C$ between the neighbours of $u$ has length at least~$2$. This finishes the proof of Claim~2.

\vspace{0.2cm}
\noindent \textbf{Claim 3:} \emph{All chords of $C$ are between vertices at distance exactly $2$ in $C$.}
\vspace{0.2cm}

An edge between two vertices at distance at least~$3$ in $C$ together with $C$ is a ladder and hence, not present. This implies Claim~3.

\vspace{0.2cm}
\noindent \textbf{Claim 4:} \emph{If $v_1,v_2,v_3,v_4$ are vertices in $C$ and occur in this order in $C$ (after picking any orientation on $C$), then there are no vertices $z_1,z_2$ outside of $C$ such that $z_1$ is adjacent to $v_1$ and $v_3$ while $z_2$ is adjacent to $v_2$ and $v_4$.}
\vspace{0.2cm}

Otherwise, this would yield a ladder with $v_2$ and $v_3$ as the vertices of degree~$3$.

\medskip

Now we are ready to prove the statement of the lemma. Let $v_1v_2\ldots v_n$ be an ordering of the vertices of $C$ in some direction of $C$. Let $E$ be the set of chords of $C$. We want to show that $G'=G-E$ is a circular ordering of short $\Theta$'s.
For now we will work in $G'$. 

If there is no vertex of degree at least $3$ in $G'$, then $G'$ is a cycle and therefore, a circular ordering of short $\Theta$'s (each short $\Theta$ is a $\Theta_1$ of order $0$). 
Hence, we may assume that there is a vertex of degree~$3$,
which by Claim~2, has to be contained in $C$, say $v_1$. By Claim~2 again, the neighbourhood of $v_1$ can be partitioned into vertices that are adjacent to only $v_1$ and $v_3$ and into vertices that are adjacent to only $v_1$ and $v_{n-1}$. Without loss of generality assume that $v_1$ and $v_3$ have at least one common neighbour outside of $C$. By Claim~2 and Claim~4, $v_2$ cannot be adjacent to vertices other than $v_1$ and $v_3$ in $G'$.
Now the vertices $v_1,v_3$ and their common neighbours induce a short $\Theta$ that is separated from the rest of $G'$ by $v_1$ and $v_3$.

We proceed with $v_3$. All neighbours of $v_3$ outside of $C$ are adjacent only to $v_1$ and~$v_3$ or only to $v_3$ and $v_5$. If
$v_3$ and $v_5$ have any common neighbours, then by the same argument as before, we obtain a short $\Theta$ between $v_3$ and $v_5$ that is separated from the rest of $G'$ by $v_3$ and $v_5$. Otherwise, if $v_3$ and $v_5$ do not have any common neighbours there is a short $\Theta_1$ of order~$0$ between $v_3$ and $v_4$. 
In any case, continue with $v_5$ or $v_4$ respectively until we get back to $v_1$. Note that as $v_2$ is adjacent only to $v_1$ and $v_3$, there cannot be a short $\Theta$ such that $v_1$ is in its interior. Therefore, we obtain that $G'$ is a circular ordering of $\Theta$'s. By construction, this circular ordering contains no short $\Theta_1$ of order~$1$. 

Now we need to deal with the chords of $C$. First, suppose the length of $C$ is~$3$. Then $G=C$ since every vertex outside of $C$ would allow us to increase the length of $C$.
This means that $G$ contains fewer than six vertices, which is a contradiction. Now suppose the length of $C$ is~$4$. By Claim~2, all vertices outside of $C$ are adjacent to either $v_1$ and~$v_3$ or $v_2$ and $v_4$. By Claim~4, all vertices outside of $C$ have the same neighbourhood, say~$v_1$ and~$v_3$. As $|V(G)|\geq 6$, there is at least one vertex $u$ outside of $C$. If the 
chord~$v_2v_4$ exists, we obtain the cycle $v_1v_2v_4v_3u$ whose length is $5$, which is a contradiction to the maximality of~$C$. It follows that $G$ is a short $\Theta$ between $v_1$ and $v_3$.
Note that any short~$\Theta_r$ where $r\geq 2$ is also a circular ordering of short $\Theta$'s.

Hence, we may assume that the length of $C$ is at least $5$. Let $e$ be a chord of $C$. By Claim~3, the distance of the endvertices of $e$ in $C$ is exactly $2$, say $e=v_iv_{i+2}$. We add the edge $e$ to $G'$.
First, supppose that $v_i$ and $v_{i+2}$ are endvertices of short $\Theta$'s (possibly the same one). The vertices $v_i$ and $v_{i+2}$ cannot simultaneously also lie in the interior of a short $\Theta$ and therefore, $v_{i+1}$ is adjacent only to $v_i$ and $v_{i+2}$. After adding the edge~$v_iv_{i+2}$ to the graph, there is a short $\Theta$ between $v_i$ and $v_{i+2}$ with interior vertex $v_{i+1}$. The resulting graph still is a circular ordering of short $\Theta$'s where no short $\Theta$ is a short $\Theta_1$ of order~$1$.

Now suppose that one endvertex of $e$, say $v_i$, is a vertex in the interior of a short~$\Theta$ of order at least~$2$. By construction, there is a short $\Theta$ of order at least~$2$ between $v_{i-1}$ and $v_{i+1}$ with interior vertex $v_i$. Then there is a ladder with $v_{i-1}$ and $v_i$ as vertices of degree $3$, which is a contradiction. 

Next suppose that exactly one of the endvertices of $e$, again say $v_i$, is in the interior of a short $\Theta$ of order~$1$.
The endvertices of this short $\Theta$ are $v_{i-1}$ and $v_{i+1}$ and since this short $\Theta$ cannot be a short $\Theta_1$, the vertices $v_{i-1}$ and $v_{i+1}$ are adjacent. Moreover, $v_{i+2}$ is the endvertex of some short $\Theta$; otherwise we land in the last case, which resulted in a contradiction. Hence, there is a short $\Theta$ between $v_{i+1}$ and $v_{i+2}$, which means it is a short $\Theta_1$ of order $0$. We obtain a diamond between $v_{i-1}$ and $v_{i+2}$, which is a short $\Theta_2$. The resulting graph is still a circular ordering of short $\Theta$'s and any short $\Theta_1$ has order~$0$.

Lastly, suppose that both endvertices of $e$ are vertices in the interior of a short $\Theta$ of order $1$.
Again we deduce that $v_{i-1}$ and $v_{i+1}$ are adjacent as well as $v_{i+1}$ and $v_{i+3}$. If the length of $C$ is at least $6$, then we obtain a ladder where the vertices of degree~$3$ are $v_{i-1}$ and $v_{i+1}$ ($v_{i+1}$ and $v_{i+3}$ is also a possible choice). So suppose the length of~$C$ is exactly~$5$. The graph induced by the vertices in $C$ contains a $4$-wheel with center vertex~$v_{i+1}$ (a cycle of 
length~$4$ together with a center vertex that is adjacent to all vertices on that cycle). There is a Hamiltonian path between any two vertices in the $4$-wheel. Suppose there is a vertex $u$ outside of $C$, then $u$ has two neighbours $v,w$ on~$C$. However,
the edges $uv,uw$ together with a Hamiltonian path between $v,w$ in $G[C]$ yield a cycle of length~$6$. This is a contradiction. Now we obtain another contradiction as $|V(G)|=|V(C)|<6$.

We have now dealt with all possible cases for the chord $e$.
Inductively, add all chords of $C$ to $G'$. Observe that after adding a chord we either obtained a contradiction or the resulting graph was still a circular ordering of short $\Theta$'s where no short $\Theta$ is a short $\Theta_1$ of order~$1$.
This finishes the proof.
\qed
\end{proof}

Using a similar approach it is not that hard to see that the following characterization is also true.
We say that a graph $G$ is an \emph{$H$-expansion} if and only if it contains $H$ as a minor. 

\begin{lemma}\label{nohouse}
Let $G$ be a $2$-connected graph with at least five vertices.
Then $G$ does not contain a house-expansion if and only if $G$ is a cycle or a short $\Theta$.
\end{lemma}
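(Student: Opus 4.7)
The plan is to follow the same pattern as Lemma~\ref{thm:noladder}, replacing the ladder by the house throughout. The key observation is that the house is precisely a theta graph with three internally disjoint paths of lengths $1$, $2$, and $3$ between its two degree-$3$ vertices. Consequently, a cycle $C$ together with a $C$-path $P$ is a house-expansion whenever the three lengths $\ell(P)$, $a$, $b$ (with $a, b$ the two arcs of $C$ split by $P$), sorted in nondecreasing order, dominate $1, 2, 3$. In particular, any $C$-path of length $\geq 3$ attached to a cycle (whose length is automatically $\geq 3$) already yields a house, since the two arcs then have lengths $\geq 1$ and $\geq 2$.

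Let $C$ be a longest cycle of $G$. I would prove three claims in sequence, mirroring the ladder proof. \textbf{Claim 1:} every vertex outside $C$ has all its neighbours on $C$; otherwise 2-connectedness produces a $C$-path of length $\geq 3$, hence a house. \textbf{Claim 2:} every vertex $u$ outside $C$ has degree exactly $2$, with its two neighbours at distance exactly $2$ on $C$. Distance $1$ would extend $C$; distance $\geq 3$ (which forces $|C| \geq 6$) yields a $C$-path of length $2$ with arcs both of length $\geq 3$, giving sorted lengths $2, \geq 3, \geq 3$. A third neighbour of $u$ gives, exactly as in the ladder proof, three internally disjoint $u$-$v_2$-paths of lengths $1, 3, 3$, once again a house. \textbf{Claim 3:} $C$ has no chord whenever $|C| \geq 5$, because such a chord is a $C$-path of length $1$ with arcs of lengths $\geq 2$ and $\geq 3$, yielding sorted lengths $1, \geq 2, \geq 3$.

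I then case-split on $|C|$. If $|C| \geq 5$: an outside vertex at distance $2$ creates arcs of lengths $2$ and $|C| - 2 \geq 3$, producing a house; together with Claim~3 this means neither outside vertices nor chords, so $G = C$ is a cycle. If $|C| = 3$: any outside vertex would have two neighbours on the triangle which are necessarily adjacent, extending $C$; hence $G$ is the triangle, simultaneously a cycle and a short $\Theta_2$. If $|C| = 4$ with $C = c_0 c_1 c_2 c_3$: by Claim~2 the outside vertices attach either to $\{c_0, c_2\}$ or to $\{c_1, c_3\}$; one vertex of each type would produce a $6$-cycle, so all outside vertices attach to a single pair, say $\{c_0, c_2\}$. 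The chord $c_1 c_3$ combined with any such outside vertex would give a $5$-cycle and is therefore forbidden. The surviving graphs --- $C$, $C$ plus the chord $c_0 c_2$, $C$ plus both chords (which is $K_4$), and $C$ with any number of outside vertices on $\{c_0, c_2\}$ optionally together with the chord $c_0 c_2$ --- are all short $\Theta_r$'s with endvertices $c_0, c_2$ whose paths have lengths in $\{1, 2\}$ (with the diamond counted as a short $\Theta_2$ by the paper's convention).

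The main obstacle is the $|C| = 4$ case: there the house-detection mechanism alone is insufficient, so one must instead appeal to the maximality of $|C|$ to exclude the problematic chord/outside-vertex combinations, and afterwards identify each surviving graph with a short $\Theta_r$, relying on the ad-hoc convention that the diamond is a short $\Theta_2$. Elsewhere the argument is a clean adaptation of Lemma~\ref{thm:noladder}, with the arithmetic thresholds relaxed because the house is small enough that a $C$-path of length $\geq 3$ already suffices, regardless of $|C|$.
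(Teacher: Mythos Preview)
Your approach is exactly what the paper intends: it gives no proof beyond ``using the same approach'' as Lemma~\ref{thm:noladder}, and your adaptation of Claims~1--3 and the subsequent case split is correct and faithful to that template.

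There is, however, a genuine gap in your $|C|=4$ analysis. You list ``$C$ plus both chords (which is $K_4$)'' among the surviving graphs and assert it is a short $\Theta_r$ with endvertices $c_0,c_2$. It is not. A short $\Theta_r$ has exactly two vertices of degree exceeding $2$ (its endvertices), whereas every vertex of $K_4$ has degree~$3$; and $K_4$ does not fall under the paper's diamond convention either. Since $K_4$ is $2$-connected and, having only four vertices, contains no house-expansion, it is in fact a counterexample to the lemma as literally stated. This is the analogue of the ``fewer than six vertices'' escape clause in Lemma~\ref{thm:noladder}, which the present lemma omits. You should flag this exception rather than absorb $K_4$ into the short-$\Theta_r$ list; for the paper's application (where the lemma is only used to conclude that certain blocks containing a fixed vertex $v$ are house-free) the defect is harmless, but your proof of the stated implication is incorrect at this one point.
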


\subsection{Proof of Theorem \ref{3RungLadderHasEdgeEPProperty}}
We need just one more very basic lemma which as far as we know, has not been written down anywhere else. A \emph{segment} of a tree $T$ is a non-trivial path $P$ in~$T$ between two vertices whose degree in~$T$ is distinct from~$2$ and such that all the interior vertices of $P$ have degree~$2$ in~$T$.

\begin{lemma}\label{lem:segment}
The number of segments of a tree is at most $2$ times the number of its leaves.
\end{lemma}

\begin{proof}
Through a simple induction, we obtain that the number of branch vertices in a tree, that is, vertices of degree at least $3$, is bounded from above by the number of its leaves.
Let~$T$ be a tree with $\ell$ leaves and $r$ branch vertices; it follows that $r\leq\ell$. Clearly, we may assume that $T$ contains at least two vertices and thus, a segment.
Contract all segments of $T$ to single edges and denote by $T'$ the resulting tree. Observe that the number of segments of $T$ coincides with the number of edges in $T'$ and that the vertex set of $T'$ is exactly the set of leaves and branch vertices of $T$. We deduce that
\mbox{$|E(T')|=|V(T')|-1=r+\ell-1<2\ell.$}
\qed
\end{proof}

Finally, we can prove that the house graph and the ladder with three rungs have the edge-\EPP. As there is a significant overlap in the two proofs, we start with a common lemma.

\begin{lemma}\label{pretree-lemma}
Let $H$ be either the house graph or the ladder with three rungs. Let $G$ be a {$2$-connected} graph that contains a vertex $v$ that intersects all $H$-expansions and each $H$-expansion contains at least eight edges. Additionally, assume that there are no $k$ edge-disjoint $H$-expansions in $G$. Then there is an edge set $E \subseteq E(G)$ of size at most $104k$ such that in $G-E$ all neighbours of~$v$ are of degree~$2$. Moreover, there are at most $6k$ vertices at distance~$2$ from~$v$.
\end{lemma}

\begin{proof}
Note that, whenever we find $k$ edge-disjoint ladders or a ladder with less than eight edges, that these are also $k$ edge-disjoint house-expansions or a house-expansion with less than eight edges.

For a tree $T$, define a \emph{pre-leaf} as a vertex that is adjacent to at least one leaf. The \emph{order} of a pre-leaf is the number of leaves it is adjacent to. Let $P_T$ be the set of pre-leaves and $P^1_T\subseteq P_T$ the set of pre-leaves of order~$1$ in $T$. 
A leaf~$\ell$ \emph{belongs} to the unique pre-leaf~$p$ it is adjacent to. We also say that \emph{$\ell$ is a leaf of~$p$}.

Since $G$ is $2$-connected, we immediately obtain that $G-v$ is connected and therefore, we know that there is a tree in $G-v$ that contains all neighbours of $v$ and such that all of its leaves are neighbours of $v$. Indeed, take any spanning tree of $G-v$ and gradually remove leaves of this tree that are not neighbours of~$v$. Consider all trees in $G-v$ with this property and let $T$ be the one that maximizes $|P_T|$ and then minimizes $|P^1_T|$. First we want to bound the size of $P_T$.

\vspace{0.2cm}
\noindent \textbf{Claim 1:} \emph{The size of $P_T$ is bounded by $6k$, that is, $|P_T|\leq 6k$.}
\vspace{0.2cm}

Suppose otherwise. Define a mark on $T$ such that exactly the vertices in $P_T$ are marked; there are at least $2\cdot 3k$ marked vertices in $T$. Use Lemma \ref{edge tree decomposition} to 
find a $3$-good partition of $T$ into $k$ edge-disjoint trees $T_1,\ldots, T_k$. Thus, there are distinct marked vertices $a_i^j\in V(T)$ where $i\in[3]$ and $j\in[k]$ such that $a_i^j$ is marked in $T_j$.

For $j\in [k]$, let $P$ be the unique path in $T_j$ from $a_1^j$ to $a_2^j$ and let $Q$ be the unique path in $T_j$ from $a_3^j$ to $V(P)$. Let the endvertex of $Q$ on $P$ be $r$. By construction, the paths in $T_j$ from $r$ to $a_1^j$, to $a_2^j$ and to $a_3^j$ are disjoint outside of $r$. At least two of those paths have length at least $1$ since the vertices $a_1^j,a_2^j$ and $a_3^j$ are distinct, say the paths from $r$ to $a_1^j$ and $a_3^j$. 
As each marked vertex in $T$ is a pre-leaf,
each vertex $a_i^j$ is adjacent to a leaf of $T$.
The graph induced by the vertices $V(P),V(Q)v$ and one leaf of each vertex~$a_i^j$ is a ladder, see Figure \ref{fig1}. Recall that all leaves of $T$ are neighbours of $v$.

\begin{figure} [htb]
\center
\begin{tikzpicture}[scale=0.15]

\node at (10,-5) [smallvx](v){};
\node at (0,5) [smallvx] (l1) {};
\node at (10,0) [smallvx] (l2) {};
\node at (20,5) [smallvx] (l3){};
\node at (0,10) [smallvx] (a1){};
\node at (10,10) [smallvx] (r){};
\node at (20,10) [smallvx] (a3){};
\node at (10,5) [smallvx] (a2){};

\path[hedge]
(v) edge node {} (l1)
 	edge node {} (l2)
 	edge node {} (l3)

(a1) edge node {} (l1)

(a2) edge node {} (l2)
	 
(a3) edge node {} (l3);	

\draw [decorate,decoration=snake] (r) -- (a1);
\draw [decorate,decoration=snake] (r) -- (a2);
\draw [decorate,decoration=snake] (r) -- (a3);

\node at(12,-5){\textbf{$v$}};
\node at(0,12.5){\textbf{$a_1^j$}};
\node at(10,12.5){\textbf{$r$}};
\node at(20,12.5){\textbf{$a_3^j$}};
\node at(8,5){\textbf{$a_2^j$}};

\end{tikzpicture}

\caption{This graph is a ladder even if $r=a_2^j$.}
\label{fig1}
\end{figure}

As the vertices $a_i^j$ are distinct, also the leaves of $T$ that belong to them are distinct. Since the trees $T_1,\ldots, T_k$ are edge-disjoint, we obtain $k$ edge-disjoint ladders from these $k$ trees, which is a contradiction. This proves Claim~1.

\vspace{0.2cm}
\noindent \textbf{Claim 2:} \emph{For $n\in\N$, if $P$ is a path in $G-v$ that contains $5n$ neighbours of $v$, then there are $n$ ladders in $G[P\cup \{v\}]$.}
\vspace{0.2cm}

Let $p_1, ..,p_5$ be the first five neighbours of $v$ on $P$ (starting in any endvertex of $P$). Let $Q=p_1Pp_{5}$. In $G[Q\cup \{v\}]$ there is a ladder, as seen in Figure \ref{manypath}. We proceed with the next five neighbours of $v$ on $P$ to find another ladder that is edge-disjoint from the first one since they intersect only in $v$. As there are $5n$ neighbours of $v$ on $P$, we inductively obtain $n$ edge-disjoint ladders in $G[P\cup \{v\}]$, which proves Claim~2.

\begin{figure}[htb]
\center
\begin{tikzpicture}[scale=0.3]

\node at (8,0) [smallvx](v){};
\node at (0,5) [smallvx] (p1) {};
\node at (4,5) [smallvx] (p2) {};
\node at (8,5) [smallvx] (p3){};
\node at (12,5) [smallvx] (p4){};
\node at (16,5) [smallvx] (p5){};

\path[hedge]
(v) edge node {} (p1)
 	edge node {} (p3)
    edge node {} (p5);	
    
\draw [decorate,decoration=snake] (p1) -- (p2);
\draw [decorate,decoration=snake] (p2) -- (p3);
\draw [decorate,decoration=snake] (p3) -- (p4);
\draw [decorate,decoration=snake] (p4) -- (p5);

\node at(0,6){\textbf{$p_1$}};
\node at(4,6){\textbf{$p_2$}};
\node at(8,6){\textbf{$p_3$}};
\node at(12,6){\textbf{$p_4$}};
\node at(16,6){\textbf{$p_5$}};
\node at(8,-1){\textbf{$v$}};

\end{tikzpicture}
\caption{A ladder where $v$ and $p_3$ are the vertices of degree $3$.}
\label{manypath}
\end{figure}
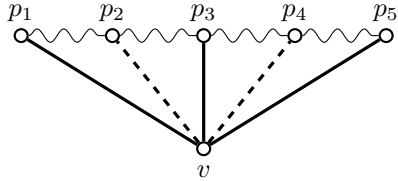

\vspace{0.2cm}
\noindent \textbf{Claim 3:} \emph{The number of neighbours of $v$ that are not leaves of $T$ is less than~$80k$.}
\vspace{0.2cm}

Let $T'$ be the subtree of $T$ that we obtain by removing all leaves of $T$.
We immediately deduce that all leaves of $T'$ are pre-leaves of $T$. Hence by Claim~1, the number of leaves of $T'$ is bounded by $6k$ and by Lemma \ref{lem:segment}, the tree $T'$ contains at most $12k$ segments. If there are any edges from $v$ to endvertices of segments of $T'$, remove them; these are at most $24k$ edges. 
For each segment $S$ of $T'$, let $\ell_S$ be the number of remaining neighbours of $v$ on $S$ and let $n_S=\lfloor\frac{\ell_S}{5}\rfloor$ be the number of full sets of five neighbours of $v$ on $S$.
If the sum of the $n_S$ over all segments $S$ is at least $k$, then applying Claim~2 to each segment of~$T$ yields at least $k$ ladders. These ladders are edge-disjoint since no two segments share a neighbour of $v$.
Therefore, we may assume that the sum of the $n_S$ is smaller than $k$.
The number of edges from $v$ to $T'$ is at most $$24k+\sum_{\substack{\text{$S$ segment}\\ \text{of $T'$}}}\ell_S\leq 24k+\sum_{\substack{\text{$S$ segment}\\ \text{of $T'$}}}(5n_S+4)< 24k+5k+4\cdot 12k<80k.$$
As $T'$ contains all neighbours of $v$ that are not leaves of $T$, this proves Claim~3.

\medskip

Let $E_1$ be the set of edges from $v$ to vertices of $T$ that are not leaves. From now on assume that $E_1$ has been removed from $G$. All remaining edges incident to $v$ are incident to leaves of $T$.
Note that $G-E_1$ is still $2$-connected. 
Indeed, the graph $G[T\cup \{v\}]$ is $2$-connected and so is $G[T\cup \{v\}]-E_1$. Then also $G-E_1$ has to be $2$-connected as $E_1\subseteq  G[T\cup \{v\}]$. 

The next thing we want to prove is that leaves of $T$ belonging to pre-leaves of order at least $4$, collectively, do not have many neighbours in $G$.

\vspace{0.2cm}
\noindent \textbf{Claim 4:} \emph{There are at most $6k$ leaves of $T$ belonging to pre-leaves of order at least $4$ that are adjacent to vertices other than $v$ and their pre-leaf.}
\vspace{0.2cm}

First we show that:
	\begin{equation}\label{endin1}
	\begin{minipage}[c]{0.8\textwidth}\em
If a leaf~$\ell$ of a pre-leaf~$w$ of order at least~$4$ has a neighour~$x\notin\{v,w\}$, then there is a path from~$x$ to $T$ that does not meet~$\ell$. All paths from $x$ to $T$ that are disjoint from $\ell$ end in a leaf of a pre-leaf of order $1$ and do not intersect~$v$.
	\end{minipage}\ignorespacesafterend 
	\end{equation} 

We start by proving that there is a path from $x$ to $T$ that is disjoint from~$\ell$. Clearly, we are done if $x \in V(T)$ already. If $x \notin V(T)$, by $2$-connectedness of $G-E_1$, there are two paths from $x$ to $T$ that meet only in $x$ and, hence, at most one of them may contain~$\ell$. 
Note that no path from $x$ to $T$ may intersect $v$ as all neighbours of $v$ lie in $T$.

Now let $P$ be a path from $x$ to a vertex $t\in V(T)$ that does not contain~$\ell$.
Add  $P$ and the edge~$x\ell$ to~$T$ and remove the edge $w\ell$. It is easy to see that this yields again a tree~$T'$ that contains all neighbours of $v$ such that all its leaves are neighbours of $v$. Note that $w$ is still a preleaf in $T'$ as it had leaves other than $\ell$. We want to prove that $t$ has to be a leaf of a pre-leaf of order~$1$ by doing a case analysis, which then proves (\ref{endin1}).

If $t=x$ was a preleaf, then we find a ladder with at most $7$ edges, see Figure~\ref{smallladder}. This is a contradiction. 
\begin{figure}[htb]
\center
\begin{tikzpicture}[scale=0.2]
\node at (5,0) [smallvx](v){};
\node at (0,5) [smallvx] (l1) {};
\node at (5,5) [smallvx] (l2){};
\node at (10,5) [smallvx] (l3){};
\node at (2.5,10) [smallvx] (x){};
\node at (7.5,10) [smallvx] (w){};

\path[hedge]
(v) edge node {} (l1)
 	edge node {} (l2)
 	edge node {} (l3)

(x) edge node {} (l1)
	edge node {} (l2)

(w) edge node {} (l2)
	edge node {} (l3);

\node at(5,-1.5){\textbf{$v$}};
\node at(5,7){\textbf{$\ell$}};
\node at(2.5,11.5){\textbf{$x$}};
\node at(7.5,11.5){\textbf{$w$}};
\end{tikzpicture}
\caption{This is the ladder with seven edges we find.}
\label{smallladder}
\end{figure}
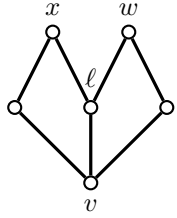

Now suppose that $t$ is not a leaf of $T$ and suppose that $x$ is no pre-leaf of~$T$.
The vertex $x$ now becomes the pre-leaf of $\ell$ in $T'$ while it was no pre-leaf in~$T$.
Note that all pre-leaves of $T$ are still pre-leaves of $T'$ and hence, $T'$ has more pre-leaves than $T$, which is a contradiction to the choice of $T$.

Lastly, suppose that $t$ is a leaf of $T$ that belongs to a pre-leaf $w_0$ of order at least $2$.
The vertex $w_0$ is still a pre-leaf in $T'$ (even if $w_0=w$) while $x$ again becomes a pre-leaf of $T'$. Hence, $T'$ has more pre-leaves than $T$, which is a contradiction. This finishes the proof of (\ref{endin1}). 

\medskip

The next thing we want to do is to bound the number of paths that go into a leaf of a pre-leaf of order~$1$. The paths that we look at are constructed as in~(\ref{endin1}).

\begin{equation}\label{notwopath}
		\begin{minipage}[c]{0.8\textwidth}\em
There are no two distinct leaves $\ell_1, \ell_2$ of pre-leaves of order at least $4$ such that there is a leaf $\ell^*$ of a pre-leaf $w_0$ of order~$1$ and for $i\in [2]$, a path $P_i$ from $\ell_i$ to $\ell^*$ that is internally disjoint from $T$ and $v$.
		\end{minipage}\ignorespacesafterend 
		\end{equation}

Suppose otherwise. For $i\in [2]$, let $w_i$ be the pre-leaf of $\ell_i$. Follow $P_1$ from $\ell_1$ until it intersects $P_2$ for the first time in a vertex $x$. Add~$P_2$ and $\ell_1P_1x$ to~$T$ and remove the edges $\ell_1w_1$ and $\ell_2w_2$ from~$T$. Let $T'$ be the resulting tree and observe that all neighbours of $v$ lie in $T'$ and all leaves of $T'$ are neighbours of $v$. The vertex $w_0$ is not a pre-leaf anymore in $T'$ while $w_1$ and $w_2$ are still pre-leaves of order at least~$2$ in $T'$ since their order in $T$ was at least $4$ (even if $w_1=w_2$). If $x$ is adjacent to both $\ell_1$ and $\ell_2$, then $x$ becomes a pre-leaf of order~$2$ in $T'$. Now the tree $T'$ has the same number of pre-leaves but fewer pre-leaves of order~$1$. 

So we may assume that $x$ is not adjacent to both $\ell_1$ and $\ell_2$. For $i\in[2]$, let $p_i$ be the unique neighbour of~$\ell_i$ on~$P_i$. It follows that $p_1 \neq p_2$ and that both of these vertices become pre-leaves in $T'$.
Then the number of pre-leaves in $T'$ is larger than in $T$. This is a contradiction and thus, proves (\ref{notwopath}).

\medskip

By Claim~1, there are at most $6k$ pre-leaves of order~$1$ in $T$ and, thus, at most $6k$ leaves belonging to pre-leaves of order~$1$. Combining this with (\ref{endin1}) and (\ref{notwopath}), we obtain that there are at most $6k$ leaves of pre-leaves of order at least $4$ that are adjacent to vertices other than $v$ and their pre-leaf, thus, proving Claim~4. 

\medskip

Let $E_2$ be the set of edges from $v$ to leaves belonging to pre-leaves of order at least $4$ that are adjacent to vertices other than $v$ and their pre-leaf. And let $E_3$ be the set of edges from $v$ to leaves belonging to pre-leaves of order at most $3$. It holds that $|E_2|\leq 6k$ and $|E_3|\leq 3\cdot 6k$. 
Let $E = E_1 \cup E_2 \cup E_3$. Then it holds that $|E| \leq 104k$ and all neighbours of $v$ in $G-E$ are adjacent to $v$ and its preleaf, that is, they are of degree~$2$. Since the vertices at distance~$2$ from $v$ are the preleaves of $T$, by Claim~1, there are at most $6k$ such vertices. This proves Lemma~\ref{pretree-lemma}.
\qed
\end{proof}

We start by proving that the house graph has the edge-\EPP.

\begin{theorem}
The house graph has the edge-\EPP.
\end{theorem}

\begin{proof}
In \cite{BHJR19} it was shown that there is a vertex-\EP\ 
function~$g$ for the ladders that lies in $\mathcal{O}(k\log k)$. Let $f_0(k)=236(k-1)$. We prove that \mbox{$f=g\cdot f_0\in\mathcal{O}(k^2\log(k))$} is an edge-\EP\ function for the house-expansions. Let $k \in \mathbb{N}$ and let $G$ be a graph. We assume that $G$ is $2$-connected, contains a vertex $v$ intersecting all ladders and no ladder in $G$ contains less than eight edges.
To finish the proof, by Lemmas~\ref{smallexp}, \ref{1vhs} and \ref{epp two conn}, it is sufficient to show that $G$ contains either $k$ edge-disjoint ladders or at most $f_0(k)$ edges that intersect all ladders. 
Note that $f_0(k)\geq f_0(k-1)+7$ and $f_0$ is of the form $c(k-1)$ for $c>0$. 
In the following, we assume that there are no $k$ edge-disjoint ladders in $G$. Clearly, we would be done otherwise.

Now, by Lemma~\ref{pretree-lemma}, we can remove an edge set $E_1$ of size at most $104k$ such that all neighbours of $v$ are of degree~$2$. We call the vertices at distance~$2$ from $v$ \emph{majors}. For each major that has exactly one common neighbour with~$v$, remove the edge from $v$ to that common neighbour. Let $E_2$ be the set of these edges. Note that by the lemma, the size of $E_2$ is bounded by $6k$. 
Let $w_1, \ldots, w_n$ be the remaining majors. Each major has at least two common neighbours with $v$.
Split~$v$ into vertices $v_1,\ldots, v_n$ such that $v_i$ is adjacent to all common neighbours of $w_i$ and $v$ and let \mbox{$A_v=\{v_1, \ldots, v_n\}$.} 

Suppose $\cP = \{P_1,\ldots, P_{4k}\}$ is a set of $4k$ edge-disjoint $A_v$-paths .
Observe that any $A_v$-path contains exactly two neighbours of $v$ and no two edge-disjoint $A_v$-paths can meet in a neighbour of $v$ as their degree is~$2$. Let the endvertices of $P_i$ be $v_{i_1}$ and $v_{i_2}$. 
Identify the vertices $v_{i_1}$ and $v_{i_2}$ again and add to the resulting cycle in $G$ one common neighbour $x$ of $v$ and $w_{i_1}$ that does not lie in $P_i$ and both its edges to $v$ and $w_{i_1}$. Such a common neighbour exists as $w_{i_1}$ has at least two common neighbours with $v$. Remove the edges of $P_i$ and the vertex $x$ and if after that, there is at most one common neighbour between $w_{i_1}$ and $v$ or between $w_{i_2}$ and $v$, also remove these common neighbours. Then remove $P_i$ from $\cP$ and any path that intersects one of the removed common neighbours. These are at most $4$ paths. 
Note that any remaining major still has at least two common neighbours with $v$. Thus, we may continue until we construct $k$ house-expansions.

Hence, we may assume that there are no $4k$ edge-disjoint $A_v$-paths. By Mader's theorem, we can remove a set $E_3$ of at most $8k-8$ such that there are no $A_v$-paths anymore. Then any remaining major lies in its own block (they are connected only through $v$). And, thus, any block that contains $v$ is an edge or a short $\Theta$ (it contains $v$, a major and their common neighbours). No such block contains a house-expansion by Lemma~\ref{nohouse} and as every house-expansion has to intersect $v$, we found a hitting set. The size of this hitting set is $|E_1 \cup E_2 \cup E_3| \leq 118k$. 

For $k\geq 2$, it holds that $f_0(k) \geq 118k$ and for $k=1$, it holds that a hitting set of size $0$ suffices. Hence, $f_0$ is an edge-\EP\ function in the case defined at the beginning of this proof. This finishes the proof.

\qed
\end{proof}

Now we finish this section by proving that the ladder with three rungs has the edge-\EPP.

\begin{proof}[Proof of Theorem \ref{3RungLadderHasEdgeEPProperty}]
We start in the same way as the proof for the house graph. We obtain a vertex-\EP\ function
$g \in \mathcal{O}(k\log k)$ from \cite{BHJR19}. For $f_0(k)=131k^2$, we will prove that
\mbox{$f=g\cdot f_0\in\mathcal{O}(k^3\log(k))$} is an edge-\EP\ function for the ladders. Let $k \in \mathbb{N}$ and let $G$ be a graph. We again assume that $G$ is $2$-connected, contains a vertex $v$ intersecting all ladders and no ladder in $G$ contains less than eight edges.
If we can prove that $G$ contains either $k$ edge-disjoint ladders or at most $f_0(k)$ edges that intersect all ladders, then by Lemmas~\ref{smallexp}, \ref{1vhs} and \ref{epp two conn}, we are done. 
Note that $f_0(k)\geq f_0(k-1)+7$ and $f_0$ is a polynomial where all coefficients are non-negative and the terms with positive coefficients are of degree at least~$2$.
In the following, we assume that there are no $k$ edge-disjoint ladders in $G$. Clearly, we would be done otherwise.

By applying Lemma~\ref{pretree-lemma}, we obtain an edge set $E_1$ of size at most $104k$ such that in $G-E_1$ all neighbours of $v$ are of degree~$2$. We call the vertices at distance~$2$ from $v$ \emph{majors}. Let $w_1, \ldots, w_n$ be the majors in $G-E_1$. Split~$v$ into vertices $v_1,\ldots, v_n$ such that $v_i$ is adjacent to all common neighbours of $w_i$ and $v$ and let \mbox{$A_v=\{v_1, \ldots, v_n\}$.} Let~$T_0$ be an $(A_v,3)$-tree that contains the vertices $v_{i_1},v_{i_2},v_{i_3}\in A_v$.
Identify the vertices $v_{i_1},v_{i_2},v_{i_3}$ again and see that we obtain a ladder from~$T_0$. Indeed, as all neighbours of $v$ are adjacent only to $v$ and a major, the majors $w_{i_1},w_{i_2},w_{i_3}$ have to be contained in $T_0$. Then we obtain a ladder in the same way as in Claim~1 of Lemma~\ref{pretree-lemma} in Figure \ref{fig1}. Now apply Theorem \ref{thm:AmTree} to $G-E_1$ to either find $k$ edge-disjoint $(A_v,3)$-trees or a set $E_2$ of at most $18k^2$ edges that intersects all such trees. In the first case we immediately obtain $k$ edge-disjoint ladders, which is a contradiction. Thus, identify all vertices in~$A_v$ to the vertex $v$ again and let $G'=G-E_1-E_2$.

\vspace{0.2cm}
\noindent \textbf{Claim:} \emph{For each $\ell \in \mathbb{N}$, in every block of $G'$ there are either $\ell$ edge-disjoint ladders or a set of at most $5\ell-1$ edges that intersects all ladders.}
\vspace{0.2cm}

Assuming this claim is true, we want to show that this proves the theorem.
As every ladder in $G'$ has to be entirely contained in a block of $G'$ and as blocks are edge-disjoint, we can look at each block separately.
First, remove each block that does not contain a ladder. Thus, as each ladder contains $v$, we remove all blocks that do not contain $v$.
For each of the remaining blocks $B_1,\ldots, B_n$, count the maximum number of edge-disjoint ladders $\ell_1,\ldots, \ell_n\geq 1$. If the sum of the $\ell_i$ exceeds $k-1$, there are $k$ edge-disjoint ladders in $G'\subseteq G$ and we are done. Hence, we may assume that the sum of the $\ell_i$ is at most~$k-1$. Then as $\ell_i\geq 1$ for $i\in[n]$, it holds that $n<k$. By the Claim, we can find a set of at most $5(\ell_i+1)-1$ edges that intersects all ladders in $B_i$. Altogether, there is an edge set $E_3$ of size at most $\sum_{i=1}^n (5\ell_i+4)\leq 5(k-1)+4n \leq 9(k-1)$ that intersects all ladders in $G'$. 
Then $E_1\cup E_2\cup E_3$ is an edge hitting set for the ladders in~$G$ of size at most $18k^2+113k\leq f_0(k)$, which concludes the proof.

\bigskip

Let us prove Claim~5 now. Clearly, the statement is true if $\ell=1$, so we may assume that $\ell\geq 2$.
Any block of $G'$ that does not contain a ladder, contains a set of zero edges that intersects all ladders. Thus, the claim is true in this case as well. 
Hence, let $B$ be a block of $G'$ that contains a ladder and therefore, $B$ also has to contain~$v$. Let $\mathcal{M}$ be the set of majors in $G'$. 

For any major $w\in \mathcal{M}$, there is a short $\Theta$ between $v$ and $w$ that is separated from the rest of $G'$ by $v$ and $w$. This is the beginning of a circular ordering of short $\Theta$'s. Remember that by Lemma~\ref{thm:noladder}, circular orderings of short $\Theta$'s do not contain ladders.

We claim that:
\begin{equation}\label{twopre-leaves}
		\begin{minipage}[c]{0.8\textwidth}\em
there are exactly two vertices of $\mathcal{M}$ that are contained in $B$.
		\end{minipage}\ignorespacesafterend 
		\end{equation}

If no vertex of $\mathcal{M}$ lies in $B$, then $B$ is just an edge. Clearly, then $B$ does not contain a ladder, which is a contradiction. If there is exactly one vertex $w\in \mathcal{M}\cap V(B)$, then $B$ is a short $\Theta$ between $v$ and $w$. Again, we obtain a contradiction since there is no ladder in a short $\Theta$.
Lastly, suppose there are at least three vertices of $\mathcal{M}$ in $B$. 
Let $T_0\subseteq B$ be a tree in $B-v$ that contains three vertices of $\mathcal{M}$; such a tree exists by $2$-connectedness of~$B$. Each of these three vertices has a common neighbour with $v$ in $B$. This, however, means that when we split $v$ into the vertices of $A_v$ again, we obtain an $(A_v,3)$-tree that is disjoint from $E_2$. This is a contradiction and proves (\ref{twopre-leaves}).

\medskip

Let $w_1,w_2$ be the two vertices in $\mathcal{M}\cap V(B)$. As $B$ is $2$-connected and as all neighbours of $v$ are of degree~$2$, the subgraph $B'\subseteq B$ that results from removing $v$ and its neighbours from $B$ is connected. We call the blocks of $B'$ and also the two short $\Theta$'s in $B$ between $v,w_1$ and also between $v,w_2$, the \emph{parts} of $B$. Observe that no part contains a ladder. Indeed, any part that does not contain $v$ cannot contain a ladder. On the other hand, the parts that contain $v$ are short $\Theta$'s and hence, do not contain a ladder. 

By $2$-connectedness of $B$, the parts of $B$ can be ordered cyclically as seen in Figure~\ref{parts}, that is, every part can be separated from the rest of $B$ by two vertices that we call \emph{passing points} of that part. Every passing point is contained in exactly two parts. Note that the passing points of $B$ are $v,w_1,w_2$ and the cutvertices of $B'$.

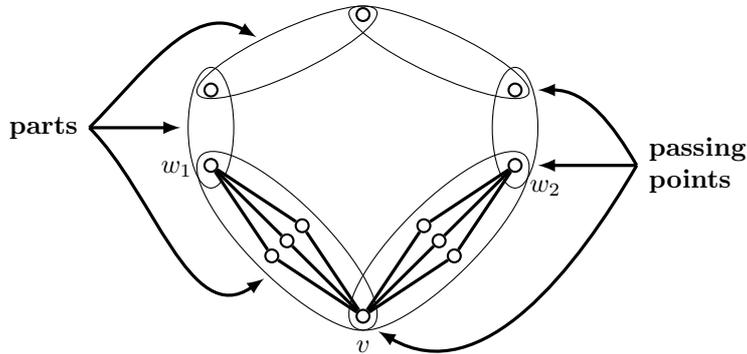
\begin{figure}[htb]
\center
\begin{tikzpicture}[scale=0.2]
\node at (10,0) [smallvx](v){};
\node at (0,10) [smallvx](w1){};
\node at (20,10) [smallvx](w2){};

\node at (4,4) [smallvx](l1){};
\node at (5,5) [smallvx](l2){};
\node at (6,6) [smallvx](l3){};

\node at (16,4) [smallvx](l4){};
\node at (15,5) [smallvx](l5){};
\node at (14,6) [smallvx](l6){};

\node at (20,15) [smallvx](p3){};
\node at (0,15) [smallvx](p1){};
\node at (10,20) [smallvx](p2){};

\node at (10,-2) (x1){\textbf{$v$}};
\node at (-2.25,9.75) (x2){\textbf{$w_1$}};
\node at (22,8.5) (x3){\textbf{$w_2$}};
\node[align=left] at (32,10) (x4){\textbf{passing}\\ \textbf{points}};
\node at (-11,12.5) (x5){\textbf{parts}};
\coordinate (q1) at (28,10);
\coordinate (q2) at (-8,12.5);

\path[->, very thick] (q1) edge node {} (21.5,10);
\path[->, very thick, in=330, out=240] (q1) edge node {} (11,-1);
\path[->,very thick, in=0, out=120] (q1) edge node {} (21.5,15);

\path[->, very thick] (q2) 	edge node {} (-2,12.5);
\path[->,very thick, in=150, out=45] (q2) edge node {} (3,18.5);
\path[->, very thick, in=215, out=315] (q2) edge node {} (3.5,2.5);

\path[hedge]
(v) 	edge node {} (l1)
 	  	edge node {} (l2)
 	  	edge node {} (l3)
 	  	edge node {} (l4)
 	  	edge node {} (l5)
 	  	edge node {} (l6)
 	  	
(w1) 	edge node {} (l1)
 	  	edge node {} (l2)
 	  	edge node {} (l3)
 	  	
(w2)  	edge node {} (l4)
 	  	edge node {} (l5)
 	  	edge node {} (l6);
 	  	
\draw[rotate around={315:(5,5)}] (5,5) ellipse (8 and 2.5);
\draw[rotate around={45:(15,5)}] (15,5) ellipse (8 and 2.5);
\draw[rotate around={90:(20,12.5)}] (20,12.5) ellipse (4 and 1.5);
\draw[rotate around={335:(15,17.5)}] (15,17.5) ellipse (6.5 and 1.5);
\draw[rotate around={25:(5,17.5)}] (5,17.5) ellipse (6.5 and 1.5);
\draw[rotate around={90:(0,12.5)}] (0,12.5) ellipse (4 and 1.5);

\end{tikzpicture}
\caption{This shows the parts and passing points.}\label{parts}
\end{figure}

Suppose there are two passing points $p_1,p_2$ belonging to the same part $R$ such that there are fewer than $5\ell$ edge-disjoint paths between $p_1$ and $p_2$ in $R$. By the edge-version of Menger's theorem, there is a set $E$ of at most $5\ell-1$ edges that intersects all paths in~$R$ between~$p_1$ and~$p_2$. 
Because of the cyclical structure of $B$, every block of $B-E$ is a subgraph of a part of $B$. However, we already observed before that no part of $B$ contains a ladder. Hence, $E$ is an edge hitting set for the ladders in $B$ of size at most~$5\ell-1$; we are done.

So we may assume that in any part there are at least $5\ell$ edge-disjoint paths between its two passing points. As $\ell\geq 2$, there are at least six vertices in each part and each passing point has more than three neighbours in any part that it lies in. This implies that the parts containing $v$ are short $\Theta$'s of order at least~$2$, which means they are $2$-connected. Clearly, each part of $B$ that is a block of $B'$ is $2$-connected as well.
Hence, we are able to apply 
Theorem~\ref{thm:noladder} to see that each part of $B$ is a circular ordering of short~$\Theta$'s. Moreover, if the longest cycle in a part has length at most~$4$, then that part is also a short~$\Theta$.
Since the degree of a passing point~$p$ is larger than~$2$ in any 
part~$R$ that contains~$p$, it follows that~$p$ has to be an endvertex of a short~$\Theta$ in~$R$.

Suppose there are $\ell$ edge-disjoint cycles $C_1,\ldots, C_\ell$ of length at least $5$ such that~$C_i$ is contained entirely inside a part~$R_i$. As the length of a longest cycle inside a short~$\Theta$ is at most~$4$, for $i\in[\ell]$, the cycle $C_i$ has to pass through all endvertices of short~$\Theta$'s in~$R_i$. In particular, the cycle $C_i$ passes through the passing points of $R_i$. The parts that contain~$v$ are short~$\Theta$'s, which means that $v\notin V(R_i)$ for each $i\in[\ell]$. In each part that is a circular ordering of short~$\Theta$'s, we can go from one passing point to the other through two different sets of short~$\Theta$'s. Removing the edges of any cycle inside such a part can only reduce the number of edge-disjoint paths through each set between the passing points by at most two (it is reduced by two only when the cycle passes through a diamond `sideways' or when a cycle is completely contained within a short $\Theta$). Hence, whenever the edges of a cycle inside a part are removed, the maximum number of edge-disjoint paths between the passing points of $R_i$ decreases by at most~$4$. 

Since in each part of $B$ there are $5\ell$ edge-disjoint paths between its passing points, there are at least $\ell$ edge-disjoint paths between the passing points of each part of $B$ that are edge-disjoint from $C_1,\ldots,C_\ell$.
For $i\in[\ell]$, merge $C_i$ with one path between the passing points of each part of $B$ that is distinct from $R_i$. The result is the cycle $C_i$ together with a $V(C_i)$-path that passes through $v$ and as $v\notin V(R_i)$, has length at least~$4$.
(We use here that $v$ is not adjacent to $w_1$ and $w_2$ in $G'$.)
We already remarked in the proof of Lemma \ref{thm:noladder} that a cycle of length at least~$5$ together with a path of length at least~$3$ is a ladder. As there are $\ell$ edge-disjoint cycles $C_1,\ldots,C_\ell$ and as there are $\ell$ edge-disjoint paths between the passing points of each part that are disjoint from the cycles $C_1,\ldots,C_\ell$, we obtain $\ell$ edge-disjoint ladders.

So we may assume that there are fewer than $\ell$ edge-disjoint cyles of length at least~$5$ in the parts of $B$.
Recall that a circular ordering of short $\Theta$'s that does not contain a cycle of length at least~$5$ is also a short $\Theta$.
Let~$n$ be the maximum number of edge-disjoint cycles of length at least~$5$ in a part~$R$ and suppose that $n\geq 1$. Let $r$ be the minimum number such that one of the short $\Theta$'s of $R$ is a short $\Theta_r$.
By construction of a circular ordering of short~$\Theta$'s, it is not that hard to see that $\frac{r}{2}\leq n\leq r$. This then implies that there is a set of at most $r\leq 2n$ edges inside that $\Theta_r$ that intersects all cycles of length at least~$5$ in $R$.

As the number of cycles of length at least~$5$ in the parts of $B$ is less than $\ell$, we
find a set $E$ of at most $2\ell$ edges such that for each part $R$ of $B$, the blocks of $R-E$ are short~$\Theta$'s.
Because of the cyclic structure seen in Figure~\ref{parts},
the block of $B-E$ that contains $v$ is a circular ordering of short $\Theta$'s now, which by Lemma \ref{thm:noladder}, implies that there are no ladders in $B-E$ anymore. This finishes the proof of Claim~5 and thus, the proof of the theorem.
\qed
\end{proof}

\section{Discussion} \label{sec:discussion}

Is Theorem~\ref{no14rungs} optimal? Proposition~\ref{tools:prop:13rungLadders} shows that it is impossible to improve upon our bound of $14$~rungs when using a condensed wall. 

\begin{proposition} \label{tools:prop:13rungLadders}
For every $n \in \N$, every condensed wall $W$ of size $r \geq 5 \cdot n$ contains $n$ edge-disjoint subdivisions of ladders $L_1, L_2, \ldots , L_n$ which all have exactly 13~rungs.
\end{proposition}

\begin{proof}
We split our wall into $n$ subgraphs $C_i, i \in [n]$ each containing exactly 5~layers of $W$ plus $a$ and $b$. If $r > 5 \cdot n$, we can discard any additional layers by not placing any part of a ladder in them. Let $C_i$ be the subgraph of $W$ induced by $W_{5 \cdot i - 4}, W_{5 \cdot i - 3}, \ldots , W_{5 \cdot i}, a$ and $b$ for all $i \in \N$.
The only vertices shared by all $C_i$ are $a$ and $b$, otherwise $C_i$ and $C_{i+1}$ only overlap at a bottleneck vertex.
In particular, $C_i$ and $C_j$ are edge-disjoint for every $i \neq j$.

Each ladder $L_i, i \in [n]$ can be embedded in $C_i$ 
as in Figure~\ref{fig:LadderInWall}.


\begin{figure}[bht] 
\centering
\begin{tikzpicture}[scale=1]
\tikzstyle{tinyvx}=[thick,circle,inner sep=0.cm, minimum size=1.5mm, fill=white, draw=black]

\def\vstep{1}
\def\hstep{0.5}
\def\hwidth{9}
\def\hheight{4}

\def\totalheight{\hheight*\vstep}
\def\totalwidth{\hwidth*\hstep}
\pgfmathtruncatemacro{\minustwo}{\hwidth-2}
\pgfmathtruncatemacro{\minusone}{\hwidth-1}

\foreach \j in {0,...,\hheight} {
\draw[ledge] (0,\j*\vstep) -- (\hwidth*\hstep,\j*\vstep);
\foreach \i in {0,...,\hwidth} {
\node[tinyvx] (v\i\j) at (\i*\hstep,\j*\vstep){};
}
}

\foreach \j in {1,...,\hheight}{
\node[hvertex] (z\j) at (0.5*\hwidth*\hstep,\j*\vstep-0.5*\vstep) {};
}
\pgfmathtruncatemacro{\plusvone}{\hheight+1}

\node[hvertex,fill=hellgrau,label=above:$c$] (z\plusvone) at (0.5*\totalwidth,\totalheight+0.5*\vstep) {};
\node[hvertex,fill=hellgrau,label=below:$d$] (z0) at (0.5*\totalwidth,-0.5*\vstep) {};

\foreach \j in {1,...,\plusvone}{
\pgfmathtruncatemacro{\subone}{\j-1}
\draw[line width=1.3pt,double distance=1.2pt,draw=white,double=black] (z\j) to (z\subone);
\foreach \i in {0,2,...,\hwidth}{
\draw[ledge] (z\j) to (v\i\subone);
}
}

\foreach \j in {0,...,\hheight}{
\foreach \i in {1,3,...,\hwidth}{
\draw[ledge] (z\j) to (v\i\j);
}
}

\pgfmathtruncatemacro{\minusvone}{\hheight-1}
\node[hvertex,fill=hellgrau,label=left:$a$] (a) at (-\hstep,0.5*\totalheight) {};
\foreach \j in {0,...,\hheight} {
\draw[ledge] (a) to (v0\j);
}

\node[hvertex,fill=hellgrau,label=right:$b$] (b) at (\totalwidth+\hstep,0.5*\totalheight) {};
\foreach \j in {0,...,\hheight} {
\draw[ledge] (v\hwidth\j) to (b);
}

	\draw[rededge] (v64) -- (v54) node [midway, yshift=-0.25cm] {$1$}; 
	\draw[rededge] (v54) to (v44) ;
	\draw[rededge] (v64) to (z5)  ;
	\draw[rededge] (v44) -- (z5)  node [midway] {$2$}; 
	\draw[rededge] (v44) to (v34) ;
	\draw[rededge] (v24) to (z5)  ;
	\draw[rededge] (v34) -- (v24) node [midway] {$3$}; 
	\draw[rededge] (v34) to (z4)  ;
	\draw[rededge] (v24) to (v14) ;	
	\draw[rededge] (v14) -- (z4)  node [midway, yshift=-0.15cm, label=left:{$4$}] {}; 
	\draw[rededge] (v14) to (v04) ;	
	\draw[rededge] (a)   to (v04) ;
	\draw[rededge] (z4)  to (v03) ;
	\draw[rededge] (a)   -- (v03) node [midway, label=right:{$5$}] {}; 
	\draw[rededge] (a)   to (v02) ;
	\draw[rededge] (v03) to (v13) ;
	\draw[rededge] (v13) to (z3)  ;
	\draw[rededge] (v02) -- (z3)  node [midway, yshift=0.15cm, label=left:{$6$}] {}; 
	\draw[rededge] (v02) to (v12) ;
	\draw[rededge] (z3)  to (v22) ;
	\draw[rededge] (v12) -- (v22) node [midway] {$7$}; 
	\draw[rededge] (v12) to (z2)  ;
	\draw[rededge] (v22) to (v32) ;
	\draw[rededge] (v32) -- (z2)  node [midway] {$8$}; 
	\draw[rededge] (z2)  to (v81) ;
	\draw[rededge] (v81) to (v91) ;
	
	\draw[rededge] (v32) to (v42) ;
	\draw[rededge] (v42) to (v52) ;
	\draw[rededge] (v52) to (v62) ;
	\draw[rededge] (v62) to (v72) ;
	\draw[rededge] (v72) to (v82) ;
	\draw[rededge] (v82) to (v92) ;
	\draw[rededge] (v92) to (b)   ;
	\draw[rededge] (b)   -- (v91) node [midway, label=left:{$9$}] {}; 
	\draw[rededge] (v91) to (z1)  ;
	\draw[rededge] (b)   to (v90) ;
	\draw[rededge] (v80) to (v90) ;
	\draw[rededge] (v80) -- (z1)  node [midway, yshift=0.15cm, label=right:{$10$}] {}; 
	\draw[rededge] (v80) to (v70) ;
	\draw[rededge] (v60) to (z1)  ;
	\draw[rededge] (v70) -- (v60) node [midway] {$11$}; 
	\draw[rededge] (v70) to (z0)  ;
	\draw[rededge] (v60) to (v50) ;
	\draw[rededge] (v50) -- (z0)  node [midway] {$12$}; 
	\draw[rededge] (v30) to (z0)  ;
	\draw[rededge] (v50) to (v40) ;
	\draw[rededge] (v40) -- (v30) node [midway, yshift=0.25cm] {$13$}; 

\end{tikzpicture}
\caption{A ladder (thick edges) of size~13 in a condensed wall of size 5. Edges belonging to rungs are labeled.}
\label{fig:LadderInWall}
\end{figure}
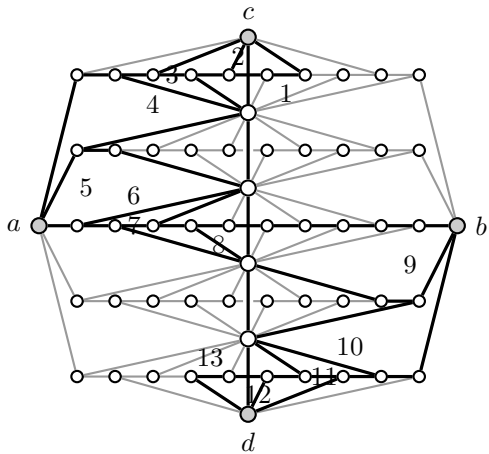

This yields a ladder $L_i$ with 13~rungs for every $i \in [n]$ and those ladders do not share a single edge. Thus we have proven that $W$ contains $n$ edge-disjoint ladders.
\qed
\end{proof}

In Conjecture~\ref{intro:conjectureCondWall}, we have posed the question whether the condensed wall plays a key role for characterizing the edge-Erd\H{o}s-P\'{o}sa property. 
We have completed an alternative proof for Theorem~\ref{no14rungs} (which can be found in \cite{steck23dissertation})
which does not use a standard condensed wall anymore, but omits all jump-edges $z_{i-1} z_{i}$. 
This is a hint that the condensed wall might not be as crucial as it seemed to be.

Curiously, the proof in \cite{steck23dissertation} is also the only known case for which the edge-Erd\H{o}s-P\'{o}sa property does not fail for $k=2$, but instead for $k=3$.


With respect to ladders, we can also take a look at the other side of the problem: Can we prove that ladders with more than three rungs have the edge-Erd\H{o}s-P\'{o}sa property?
We do not know, but at least the proof in this paper will not carry over to a larger ladder.
This is because the way we construct the tree in the proof does not help for larger ladders. Additonally, it looks like it is much harder to characterize the graphs without longer ladders. However, it might be possible to generalize the approach to other subdivisions of $\Theta_3$.

We also want to quickly discuss the hitting set size for ladders with three rungs.
Let $\mathcal{H}$ be the set of ladders with three rungs. The hitting set bound for these in $\mathcal{G}_\mathcal{H}^*$ is at least $k-1$ (take for example $k-1$ ladders and choose one vertex from each ladder and identify them with each other) and as we have mentioned in the proof of Theorem \ref{3RungLadderHasEdgeEPProperty} the optimal vertex hitting set bound for the vertex-\EPP~of ladders is $\Theta(k\log k)$ \cite{BHJR19}. So if we are using Lemma \ref{1vhs} the hitting set bound will be in $\Omega(k^2\log k)$. We could quite easily obtain this bound if we could obtain a linear bound in $k$ for Theorem \ref{thm:AmTree}, that is for $(A, m)$-trees, as then the hitting set bound in $\mathcal{G}_\mathcal{H}^*$ would also be linear instead of quadratic. And actually we do believe that this is true. On the other hand the bound for the ladders has to be in $\Omega(k\log k)$ as in the vertex version \cite{RST16}, so there is not that much room to improve. 


\section*{Data Availability}

Data sharing not applicable to this article as no datasets were generated or analysed during the current study.

%
\section*{Conflict of interest}

The authors have no relevant financial or non-financial interests to disclose.

\bibliographystyle{spmpsci}      
\bibliography{literature}{}

\end{document}